\documentclass[a4paper,12pt]{article}
\usepackage[utf8]{inputenc}

\usepackage{graphicx}
\usepackage{newtxtext}
%

\usepackage[a4paper,width=17cm, height=26cm]{geometry}
\usepackage{pinlabel}
\usepackage{amsmath}
\usepackage{amsfonts}
\usepackage{amssymb}
\usepackage{enumerate}
\usepackage{tikz}
\usetikzlibrary{decorations.pathreplacing}
\usetikzlibrary{calc}
\usetikzlibrary{shapes.geometric}
\usetikzlibrary{arrows}
\usepackage{empheq}
\usepackage{tkz-euclide}
\usetkzobj{all}
\usepackage{pgfplots}
\pgfplotsset{compat=1.14}
\usepgfplotslibrary{ternary}

\usepackage{yhmath}

\usepackage{amsthm}
\usepackage{dsfont}
\usepackage{floatflt}
\usepackage{mathrsfs}
\usepackage{url}
\newcommand{\R}{\mathbb R}
\newcommand{\C}{\mathbb C}

\newcommand{\Sp}{{\mathbb S}}

\newcommand{\hC}{\hat{\C}}
\newcommand{\area}{\mathrm{Area}}
\newcommand{\Const}{\mathsf{Const}}
\newcommand{\const}{\mathsf{const}}

\theoremstyle{plain}
\newtheorem{theorem}{Theorem}[section]

\newtheorem{corollary}[theorem]{Corollary}
\newtheorem{lemma}[theorem]{Lemma}
%
\theoremstyle{definition}
\newtheorem{remark}[theorem]{Remark}

%
%
\theoremstyle{definition}


\title{Convergence of discrete period matrices and discrete holomorphic 
integrals for ramified coverings of the Riemann sphere}
\author{Alexander I.\ Bobenko and Ulrike B\"ucking}

\begin{document}

\maketitle

\begin{abstract}
We consider the class of compact Riemann surfaces which 
are ramified coverings of the Riemann sphere $\hC$. Based on a triangulation of 
this covering of the sphere $\Sp^2\cong \hC$ and its stereographic projection, 
we define discrete (multi-valued) harmonic and holomorphic functions. 
We prove that the corresponding discrete period matrices converge to their 
continuous counterparts. In order to achieve an error estimate, which is linear 
in the maximal edge length of the triangles, we suitably adapt the 
triangulations in a neighborhood of every branch point. 
Finally, we also prove a convergence result for discrete holomorphic integrals 
for our adapted triangulations of the ramified covering.
\end{abstract}

\section{Introduction}

Smooth holomorphic functions can be characterized in different ways. In 
particular, the real and imaginary 
part of any holomorphic function is harmonic and both are related by the 
Cauchy-Riemann equations. This perspective
naturally led to linear discretizations of harmonic and holomorphic functions, 
starting with results for square grids, 
see~\cite{CFL28,Is41,Du53}. Lelong-Ferand further developed this theory of 
discrete harmonic and holomorphic functions in~\cite{F,LF55}. MacNeal and Duffin
generalized these notions in~\cite{MacN49,Du56,Du59,Du68}. In particular, 
they considered arbitrary triangulations in the plane and discovered the 
cotan-weights. The cotan-Laplacian is also considered for triangle 
meshes, for example for surfaces in discrete differential 
geometry, see~\cite{PP93}, or for applications in computer 
graphics, see for example~\cite{MDSB03}.
Further properties and theorems of the smooth
theory of holomorphic functions have found recently discrete analogues in the 
discrete linear theory in~\cite{BG16,BG17}.

Note that there are other important nonlinear  discretizations of holomorphic 
functions, for example involving circle packings or circle 
patterns~\cite{St05,Sch97,BS02,Bue08}, connected to 
cross-ratios~\cite{BP96,Ma05}, using discrete conformal 
equivalence~\cite{BPS13,Bue16}, or based on bi-colored 
triangles~\cite{DN03,N11}. The linear theory of holomorphic functions on rhombic 
lattices can be obtained as infinitesimal deformation of circle 
patterns~\cite{BMS05}.

Mercat generalized in~\cite{Me01} the discrete linear theory from planar 
subsets to discrete 
Riemann surfaces and introduced in~\cite{Me02,Me07} discrete period matrices. 
In~\cite{BMS11} numerical experiments are considered
to compute discrete period matrices for polyhedral surfaces explicitly and 
compare them to known period matrices for the corresponding smooth surfaces. 
A convergence proof for the class of polyhedral surfaces was obtained 
in~\cite{BoSk16}. 

The interest in numerical computation of period matrices is for example 
motivated by the computation of finite-genus solutions of integrable 
differential equations.
As Riemann surfaces may be represented as algebraic curves, this is often taken 
as a starting point for computing discrete period matrices. Recent results in 
this context include~\cite{GSST98,DH01,FK15,FK17,MN17}. 

In this article, we take a different approach and consider Riemann  surfaces 
which 
are ramified coverings of the Riemann sphere $\hC$. Based on a triangulation of 
this covering of the sphere $\Sp^2\cong \hC$ or its stereographic projection, 
discrete period matrices can be obtained from this discrete data. Furthermore, 
we prove convergence of the discrete period matrices to their continuous 
counterparts (Theorem~\ref{theoPeriodConv}). In particular, we obtain an error 
estimate, which is linear in the maximal edge length of the 
triangles if we adapt the triangulations in a neighborhood of every branch 
point. The details of our `adapted triangulations' will be  
explained in Section~\ref{subsecConv1}. 

The convergence of discrete analytic functions to their continuous  counterparts 
remains an important issue, although several results have been proved by now. In 
particular, for the linear theory, convergence was first shown for the square 
lattice~\cite{CFL28,LF55} and recently for more general quadrilateral 
lattices~\cite{ChS11,Sko13,BoSk16}. In this article, we prove the convergence of 
discrete holomorphic integrals (Abelian integrals of first kind) obtained 
from suitable triangulations 
of the ramified covering to their continuous counterparts 
(Theorem~\ref{theoConvAbelInt}). 

Our main results are stated in Section~\ref{SecResults} and proved  in 
Section~\ref{SecConvProof}. The proof is inspired by~\cite{BoSk16} and uses 
energy estimates which allow to prove the convergence of the discrete period 
matrices directly. Our results are also applied to improve the convergence 
results of~\cite{BoSk16} in Section~\ref{SecPoly}. Finally, in 
Section~\ref{secNum}, we present some numerical experiments.

\section{Convergence results for discrete period matrices and discrete 
holomorphic integrals for ramified coverings of $\hC$}\label{SecResults}


In the following, we consider any compact Riemann surface ${\cal R}$ of genus 
$g\geq 1$ which allows a branched covering map $f:{\cal R}\to\hC$. 
Using this covering map as a local chart, we always locally identify points in 
${\cal R}$ with their images in $\hC$.
%
Then for points in $\hC\setminus\{\infty\}$ we apply the standard stereographic 
projection to the complex plane $\C$. 
This map from $\cal R$ to $\C$ is denoted by $Pr_{\cal R}$ and gives a local 
chart in a neighborhood about every point, except at branch points.
For further use, we fix a radius $\varrho>1$ such 
that the images of all branch points, except possibly $\infty$, have a distance 
at most $\varrho/2$ from the origin.

Let $T=T_{\cal R}$ be a triangulation of ${\cal R}$ such that all 
branch points are vertices. We assume that every triangle is contained in 
only one sheet of the covering. We will mostly consider this triangulation via 
its (local) image under the chart $Pr_{\cal R}$. In this sense, without further 
mention, we always {identify} this 
triangulation with the corresponding (multi-sheeted) triangulation on $\hC$ 
(which is the image 
$f(T)$ under the covering map) and with the (multi-sheeted) image of this 
triangulation of $\C$ under the map $Pr_{\cal R}$, excluding the vertex 
at infinity. We 
assume that this triangulation is 
a locally planar embedding in the complex plane $\C$ or equivalently in the 
Riemann sphere $\hC$, except at the branch points and possibly at $\infty$. 
From now 
on, we consider the vertices of the triangulation as points of $\C$, except 
$\infty$, that is, we always apply $Pr_{\cal R}$, i.e.\ the covering map and 
the standard 
stereographic projection. The edges connecting incident vertices will be 
straight 
line segments or circular arcs in $\C$, depending on the following distinction.
\begin{enumerate}[(i)]
 \item All triangles with at least two vertices in the 
open disk $B_{\varrho}(0)$ of radius $\varrho$ about the origin are 
geodesic, that is Euclidean triangles. We always consider these triangles to 
be embedded in $\C$.
 \item All triangles whose vertices are 
all contained in the complement $\C\setminus B_{\varrho}(0)$ are preimages of 
a geodesic triangulation with Euclidean triangles under the map $z\mapsto 
1/z$. Therefore, these triangles are in general bounded by circular arcs. 

Note that we mostly consider the images of these triangles under the map 
$z\mapsto 1/z$ which are Euclidean triangles embedded into $\C$ in a 
neighborhood of the origin.
\item The remaining triangles in the 'boundary region' are consequently in 
general bounded 
by two straight lines and one circular arc. These triangles will be called {\em 
boundary triangles} and denoted by $F_{\varrho}$. Finally, we assume that the 
edge lengths of all boundary triangles are strictly smaller than 
$\max\{\varrho/2,1\}$.
As in the first case, we always consider these triangles to be embedded in 
$\C$. 
\end{enumerate}


We denote by $V,E,\vec{E},F$ the sets of vertices, edges, oriented edges, and 
faces of $T_{\cal R}$, respectively, and identify them locally with their 
images under the map  $Pr_{\cal R}$.

\subsection{Discrete harmonic functions}\label{secDiscHarm}

We define weights on the edges $E$ of the triangulation $T_{\cal R}$ 
essentially by using cotan-weights, but we distinguish three cases for 
edges $e=[x,y]\in E$ corresponding to the different cases above:
\begin{enumerate}[(i)]
 \item If both triangles incident to $e$ are contained in the open disk 
$B_{\varrho}(0)$, we use cotan-weights 
\begin{equation}\label{eqcotandef}
 c(e)=\frac{1}{2}\cot \alpha_e +\frac{1}{2}\cot \beta_e,
\end{equation}
where $\alpha_e$ and $\beta_e$ are the angles opposite to the edge $e\in E$ in 
the two adjacent triangles, see Figure~\ref{figCot}.

\begin{figure}[t]
\begin{center}
\vspace{-5em}
\begin{tikzpicture}[scale=1.5]
  \coordinate  (B) at (2cm,0);
  \coordinate  (C) at (0cm, 1cm);
\coordinate (D) at (2cm, 1.732cm);
\coordinate  (E) at (5cm,0.5cm);
\tkzLabelPoint[below](B){$t_e$};
\tkzLabelPoint[above](D){$h_e$};
 
\draw (B) -- (C)  -- (D) -- (E) -- (B); 
\draw [thick, decoration={markings,mark=at position 1 with
    {\arrow[scale=3,>=stealth]{>}}},postaction={decorate}] (B) -- (D);

\tkzCircumCenter(D,B,C)\tkzGetPoint{G1};
\tkzDrawPoint(G1);
\tkzLabelPoint[right](G1){$l_e$};
  \draw [dotted] (G1) -- (B) ;
\draw  [dotted] (G1) -- (C) ;
\draw [dotted] (G1) -- (D) ;

\tkzCircumCenter(D,B,E)\tkzGetPoint{G2};
\tkzDrawPoint(G2);
\tkzLabelPoint[left](G2){$r_e$};
\draw [dotted] (G2) -- (E) ;
\draw [dotted] (G2) -- (B) ;
\draw [dotted] (G2) -- (D) ;

\tkzMarkAngle[size=0.7, fill=green, opacity=0.5](B,C,D);
\tkzLabelAngle[pos=0.5](B,C,D){$\alpha_e$};

\tkzMarkAngle[size=0.7, fill=green, opacity=0.5](D,E,B);
\tkzLabelAngle[pos=-0.5](B,E,D){$\beta_e$};
\end{tikzpicture}
\vspace{-5em}
\end{center}
\caption{Notation associated with an edge $e=[t_e,h_e]\in E$ and with its 
oriented version $\vec{e}= \protect\overrightarrow{t_eh_e}$.
}\label{figCot}
\end{figure}
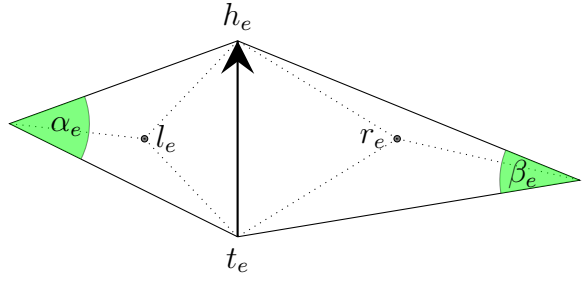

\item If both triangles incident to $e$ are contained in $\hC\setminus 
B_{\varrho}(0)$, we consider the image of the triangulation under the map 
$z\mapsto 1/z$. We define the edge weights by~\eqref{eqcotandef} for the {\em 
image} triangles. Note that this amounts to using in~\eqref{eqcotandef} the 
angles opposite to the 
edge $e\in E$ in the two adjacent circular arc triangles.
\item If $e=[x,y]$ is incident to a boundary triangle in $F_{\varrho}$, we 
define the weight similarly as above as a sum $c(e)=C_1+C_2$ of two parts 
corresponding to the two incident triangles $\Delta_1,\Delta_2$.
If there is a non-boundary triangle, say $\Delta_1$, incident to $e$, we 
consider the angle $\alpha_e$ in this triangle opposite to $e$ and set 
$C_1=\frac{1}{2}\cot\alpha_e$.
The second part $C_2=C_{[x,y]}$ is defined below in~\eqref{eqdefc} using a 
suitable interpolation function and the 
smooth Dirichlet energy. More details and explicit 
calculations are given in Appendix~\ref{secApp1}.
\end{enumerate}

Using our edge weights, we can define discrete harmonicity and a discrete 
Dirichlet energy for functions $u:V\to\R$ on the vertices of the triangulation 
$T_{\cal R}$. In particular, $u$ is called {\em discrete harmonic} if 
for every vertex $x\in V$ there holds
\begin{equation}\label{eqdefharm}
 \sum\limits_{y\in V: [x,y]\in E} c([x,y])(u(y)-u(x))=0.
\end{equation}
The {\em energy of $u$} is
\begin{equation}\label{eqdefE}
 E_T(u)=\sum\limits_{e=[x,y]\in E} c([x,y])(u(y)-u(x))^2.
\end{equation}

The motivation for our choice of weights, in particular for the choice of 
weights for boundary triangles, is the following connection of 
discrete and smooth Dirichlet energies. Recall that for a continuous function 
on a compact Riemann surface $\cal R$ which is smooth almost everywhere the 
{\em Dirichlet energy} is defined as
\[ E(u)=\int_{\cal R} |\nabla u|^2.\]
Then the discrete energy of a function $u:V\to\R$ is in fact the Dirichlet 
energy of the continuous {\em interpolation function} $I_Tu$, defined piecewise 
on every triangle $\Delta[x,y,z]$ as follows:
\begin{enumerate}[(i)]
 \item If at least two of the three vertices $x,y,z$ are contained 
in the open disk 
$B_{\varrho}(0)$, we define $I_Tu|_{\Delta[x,y,z]}$ on this triangle as the 
linear 
interpolation of the values of $u$ at the vertices.
\item If all vertices $x,y,z$ are contained in $\hC\setminus 
B_{\varrho}(0)$, we consider the image of the triangle under the map $z\mapsto 
1/z$. This is a Euclidean triangle $\widetilde{\Delta}$. Let 
$\widetilde{u}=u\circ 1/z$ 
be the corresponding transformed function and denote by 
$\widetilde{I_Tu}_{\widetilde{\Delta}}$ the linear interpolation of 
$\widetilde{u}$ on 
$\widetilde{\Delta}$. We define $I_Tu$ on the original triangle as the 
corresponding value of $\widetilde{I_Tu}_{\widetilde{\Delta}}$, so 
$I_Tu|_{\Delta[x,y,z]}= \widetilde{I_Tu}_{\widetilde{\Delta}}\circ 1/z$
\item In the remaining case, $\Delta[x,y,z]$ is a boundary triangle in 
$F_{\varrho}$ and there is exactly one vertex in $B_{\varrho}(0)$, say 
$x$. We first define $I_Tu$ on the boundary edges consistently with the 
definitions in~(i) and~(ii). The two edges $[x,y]$ and $[x,z]$ are straight 
lines. On these edges 
we define $I_Tu$ as the linear interpolation of the values of the vertices. On 
the arc $\wideparen{yz}$ connecting $y$ and $z$ we
use the interpolating function from~(ii). Then for every straight line segment
connecting $x$ to a point on the arc 
$\wideparen{yz}$ we define $I_Tu$ as the linear interpolation of the values on 
the endpoints.

Using this interpolation function, we obtain by elementary 
calculations (see Appendix~\ref{secApp1} for details) that 
\begin{equation}\label{eqdefc}
\int_{\Delta[x,y,z]} |\nabla I_Tu|^2= C_{[x,y]}(u(x)-u(y))^2 + 
C_{[y,z]}(u(y)-u(z))^2 +C_{[z,x]}(u(z)-u(x))^2,
\end{equation}
where the constants $C_{[x,y]},C_{[y,z]},C_{[z,x]}$ only depend on the triangle
$\Delta[x,y,z]$ (see~\eqref{eqCxy}--\eqref{eqCzx}) and give one part of the 
weights associated to the edges $[x,y],[y,z],[z,x]$ respectively.
\end{enumerate}

It is easy to see that for every triangulation of a ramified covering ${\cal 
R}$ of $\hC$ as above, $I_Tu$ is a well-defined continuous function on ${\cal 
R}$. Furthermore, we have

\begin{lemma}[Interpolation lemma]\label{lemInterpolation}
\[E(I_Tu)=E_T(u).\]
\end{lemma}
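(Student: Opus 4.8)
The plan is to prove the identity triangle by triangle, i.e.\ to show that for every face $\Delta[x,y,z]\in F$ the local contribution of the smooth Dirichlet energy of $I_Tu$ equals the local contribution of the discrete energy $E_T(u)$, and then to sum over all faces. Writing $E(I_Tu)=\sum_{\Delta\in F}\int_{\Delta}|\nabla I_Tu|^2$ (which is legitimate since $I_Tu$ is continuous on $\cal R$ and smooth on the interior of each triangle, with only one-dimensional overlaps between triangles), it suffices to check that on each $\Delta[x,y,z]$ we have
\[
\int_{\Delta[x,y,z]} |\nabla I_Tu|^2 = c_\Delta([x,y])(u(x)-u(y))^2 + c_\Delta([y,z])(u(y)-u(z))^2 + c_\Delta([z,x])(u(z)-u(x))^2,
\]
where $c_\Delta(e)$ denotes the summand in the definition of the edge weight $c(e)$ coming from the triangle $\Delta$. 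Summing this over all faces then regroups the terms by edges: each interior edge $e$ is shared by two triangles and receives the two summands that add up to $c(e)$, while the interpolation construction has been set up precisely so that $c(e)$ is always this sum. Hence the face-wise identities assemble into $E(I_Tu)=E_T(u)$.

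The content is therefore the three face-wise computations, corresponding to the three cases in the definition of $I_Tu$. In case (i), where $I_Tu$ is the affine interpolant on a Euclidean triangle, $\nabla I_Tu$ is the constant vector determined by the three nodal values, and a classical computation (the one that originally produced the cotan weights, cf.\ \cite{PP93,Du59}) gives $\int_\Delta|\nabla I_Tu|^2 = \tfrac12\sum_{\text{edges } e \text{ of }\Delta}\cot\gamma_e\,(\Delta u)_e^2$, where $\gamma_e$ is the angle of $\Delta$ opposite $e$; this is exactly $c_\Delta(e)=\tfrac12\cot\gamma_e$ as in~\eqref{eqcotandef}. Case (ii) reduces to case (i) by conformal invariance of the Dirichlet energy: the map $z\mapsto 1/z$ is conformal, so $\int_{\Delta}|\nabla I_Tu|^2 = \int_{\widetilde\Delta}|\nabla\widetilde{I_Tu}_{\widetilde\Delta}|^2$, and since $I_Tu$ was defined so that $\widetilde{I_Tu}_{\widetilde\Delta}$ is the affine interpolant of $\widetilde u$ on the Euclidean triangle $\widetilde\Delta$, case (i) applies to the right-hand side, yielding the cotan weights of the image triangle, which is how $c(e)$ was defined in case (ii). Case (iii), the boundary triangles, is the one that requires genuine work: here $I_Tu$ is built by a ruled construction (affine along the two straight edges from $x$, and on each segment from $x$ to a point of the arc $\wideparen{yz}$ it is the affine interpolant between the endpoint values). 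The claim is the explicit identity~\eqref{eqdefc}, with constants $C_{[x,y]},C_{[y,z]},C_{[z,x]}$ given by~\eqref{eqCxy}--\eqref{eqCzx}; establishing it is an elementary but somewhat lengthy direct integration, which the paper defers to Appendix~\ref{secApp1}. I would simply invoke that appendix computation.

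The main obstacle — and essentially the only one — is case (iii): one must parametrize a region bounded by two line segments and a circular arc, write down $\nabla I_Tu$ for the ruled interpolant (which, unlike in cases (i) and (ii), is not constant on the triangle), square it, and integrate, verifying that the result is a quadratic form in the three differences $u(x)-u(y)$, $u(y)-u(z)$, $u(z)-u(x)$ with no cross terms and with the stated coefficients. Once~\eqref{eqdefc} is granted, the global statement is immediate from the regrouping argument above, since by construction the edge weight $c(e)$ is in every case exactly the sum over the (one or two) incident faces of the per-face coefficient appearing in the corresponding face-wise energy identity. I expect no subtlety at the branch points or at $\infty$: branch points are ordinary vertices of $T$ and the sum over incident faces is still finite, and the vertex $\infty$ is handled through the $z\mapsto 1/z$ chart exactly as in case (ii), so no boundary terms arise.
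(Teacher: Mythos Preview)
Your argument is correct and follows essentially the same route as the paper: split the Dirichlet energy face by face, handle case~(i) via the classical cotan computation (the paper cites Duffin), reduce case~(ii) to case~(i) by conformal invariance of the Dirichlet integral, and invoke the Appendix~\ref{secApp1} calculation for case~(iii), then regroup by edges. The paper's proof is more terse but structurally identical.
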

\begin{proof}
 We can split the energy according to triangles $\Delta_f$ for $f\in F$.
\begin{align*}
 E(I_Tu) &=\sum\limits_{\Delta_f\subset B_{\varrho}(0)} \int_{\Delta_f} |\nabla 
I_Tu|^2
+\sum\limits_{\Delta_f\subset  \hC\setminus B_{\varrho}(0)} 
\int_{\Delta_f} |\nabla I_Tu|^2
+\sum\limits_{\Delta_f\in F_{\varrho}(0)} \int_{\Delta_f} |\nabla I_Tu|^2
\end{align*}
In particular, elementary calculations show that for any triangle 
$\Delta[x,y,z]$ we have 
\begin{equation}
\int_{\Delta_f} |\nabla I_Tu|^2= C_{[x,y]}(u(x)-u(y))^2 + 
C_{[y,z]}(u(y)-u(z))^2 +C_{[z,x]}(u(z)-u(x))^2,
\end{equation}
where the constants $C_{[x,y]},C_{[y,z]},C_{[z,x]}$ only depend on the triangle 
$\Delta[x,y,z]$. 
Duffin showed in~\cite[\S~4]{Du59} that for Euclidean 
triangles these constants are one half of the cotan of the opposite angles. 
Using the conformal invariance of the Dirichlet integral 
for the triangles in $\hC\setminus B_{\varrho}(0)$
and our choice of weights from~\eqref{eqdefc}, we obtain the claim.
\end{proof}

\begin{remark}\label{remest}
It will be important to note that the constants $C_{[x,y]},C_{[y,z]},C_{[z,x]}$ 
defined by~\eqref{eqdefc} are only small perturbations of the usual 
cotan-weights in 
the following sense. If the maximal edge length $h$ in the boundary  
triangle $\Delta[x,y,z]\in F_\varrho$ is small enough and 
the angles in $\Delta[x,y,z]$ as well as the angles in the Euclidean triangle 
formed by the vertices $x,y,z$ lie in $[\delta,\pi-\delta]$ for some 
$\delta>0$, then elementary calculations and estimates show that
\begin{equation*}
 \textstyle\frac{1}{2}\cot\hat{\alpha}_e -C_{\delta,\varrho}\cdot h \leq C_e 
\leq \textstyle\frac{1}{2}\cot\hat{\alpha}_e +C_{\delta,\varrho}\cdot h.
\end{equation*}
for some constant $C_{\delta,\varrho}>0$, where $\hat{\alpha}_e$ denotes the 
angle opposite to the edge $e$ in 
the Euclidean triangle formed by the vertices $x,y,z$. 
The details are given in Appendix~\ref{secApp1}.

Note that the difference between the the angles in the Euclidean 
triangle with vertices $x,y,z$ and the corresponding angles in $\Delta[x,y,z]$ 
is of order $h$. Thus, for $h$ small enough, the corresponding estimates on 
$|C_{e}-\frac{1}{2}\cot\alpha_e|$ also hold for the actual angles $\alpha_e$ in 
the triangle $\Delta[x,y,z]$.
\end{remark}

\subsection{Discrete analytic functions, discrete holomorphic integrals and 
discrete period matrices}

In the following, we define discrete analytic functions and discrete holomorphic
integrals analogously as in~\cite{BoSk16}.

For an oriented edge $\vec{e}\in\vec{E}$, we denote by $h_e\in V$ and 
$t_e\in V$ the {\em head} and the {\em tail} of $\vec{e}$, and by 
$l_e\in F$ and $r_e\in F$ the {\em left shore} and the {\em right shore} of 
$\vec{e}$, respectively, see Figure~\ref{figCot}. Two functions $u:V\to\R$ and 
$v:F\to \R$ are {\em conjugate}, if for each oriented edge $\vec{e}\in\vec{E}$ 
we have
\begin{equation}\label{eqdefconj}
 v(l_e)-v(r_e)=c(e)(u(h_e)-u(t_e)).
\end{equation}
The pair $f=(u:V\to\R,\; v:F\to\R)$ of two conjugate functions is called a {\em 
discrete analytic function}.
We write $\text{Re} f:=u$ and $\text{Im} f :=v$. If both $u$ and $v$ are 
constant functions, not necessarily equal to each other, we write 
$f=\text{const}$. A direct checking shows that on simply connected surfaces
discrete harmonic functions are precisely real parts of discrete analytic 
functions. Note that for non-zero weights $c(e)\not=0$ we define the (discrete) 
{\em energy} of a function $v:F\to \R$ by 
\begin{equation}\label{eqdefEconj}
E_T(v):=\sum\limits_{e=[x,y]\in E} \frac{1}{c([x,y])}(v(y)-v(x))^2.
\end{equation}

We will consider multi-valued functions on the vertices and the faces of the 
triangulation $T$. Informally, a multi-values function changes its values after 
performing some nontrivial loop on the surface.

Recall that the Riemann surface ${\cal R}$ is a 
branched covering of $\hC$ with genus $g\geq 1$. Denote by 
$p:\widetilde{{\cal R}}\to {\cal R}$ the universal covering of $\cal R$ and by 
$p:\widetilde{T}\to T$ the induced universal covering of $T$. Fix a base point 
$z_0\in\widetilde{{\cal R}}$ and 
closed paths $\alpha_1,\dots,\alpha_g,\beta_,\dots,\beta_g:[0,1]\to \cal R$ 
forming a standard basis of the fundamental group $\pi_1({\cal R},p(z_0))$ such 
that $\alpha_1\beta_1\alpha_1^{-1}\beta_1^{-1}\cdots 
\alpha_g\beta_g\alpha_g^{-1}\beta_g^{-1}$ is null-homotopic.
Each closed path $\gamma:[0,1]\to \cal R$ with $\gamma(0)=\gamma(1)=p(z_0)$ 
determines the {\em deck transformation} $d_\gamma: \widetilde{{\cal 
R}}\to\widetilde{{\cal R}}$, 
that is, the homeomorphism such that $p\circ d_\gamma=p$ and 
$d_\gamma(z_0)=\tilde{\gamma}(1)$, where $\tilde{\gamma}:{\cal 
R}\to\widetilde{{\cal R}}$ is 
the lift of $\gamma:[0,1]\to S$ such that $\tilde{\gamma}(0)=z_0$. The 
induced
deck transformation of $\widetilde{T}$ is also denoted by $d_\gamma: 
\widetilde{T}\to\widetilde{T}$.

A {\em multi-valued function with periods $A_1,\dots, A_g, B_1,\dots, 
B_g\in\C$} is a pair of functions $f=(\text{Re} f :\widetilde{V}\to\R, \;
\text{Im} f:\widetilde{F}\to\R)$ such that for each $k=1,\dots,g$ an each $z\in 
\widetilde{V}$, $w\in\widetilde{F}$ we have
\begin{align*}
 \text{Re} f(d_{\alpha_k}(z))-\text{Re} f(z)&=\text{Re}(A_k), 
& \text{Re} f (d_{\beta_k}(z))-\text{Re} f(z)&=\text{Re}(B_k), \\
 \text{Im} f(d_{\alpha_k}(z))-\text{Im} f(z)&=\text{Im}(A_k), 
& \text{Im} f (d_{\beta_k}(z))-\text{Im} f(z)&=\text{Im}(B_k).
\end{align*}
The numbers $A_1,\dots,A_g$ and $B_1,\dots,B_g$ are called the {\em 
$A$-periods} and the {\em $B$-periods} of the multi-valued function $f$, 
respectively. Analogously, we can also define multi-valued functions 
$u:\widetilde{V}\to\R$, $v:\widetilde{F}\to\R$ or $u:\widetilde{{\cal 
R}}\to\R$. Note in 
particular, that for each multi-valued function $u:\widetilde{V}\to\R$ and 
every edge $[x,y]\in E$ the difference $u(x)-u(y)$ is well defined. The 
(discrete) {\em energy} of the multi-valued function is
\[ E_T(u)=\sum\limits_{[x,y]\in E} c([x,y])(u(x)-u(y))^2.\]
Similarly, for each multi-valued function $u:\widetilde{{\cal R}}\to\R$, which 
is smooth on every face of $\widetilde{F}$, at each 
point inside a face $\Delta\in F$ the gradient $\nabla u$ is well defined. The
(Dirichlet) {\em energy} of the multi-valued function is
\[ E(u)=\sum\limits_{\Delta_f\in F} \int_{\Delta_f} |\nabla u|^2.\]

A multi-valued discrete analytic function is called a {\em discrete holomorphic 
integral} or {\em discrete Abelian integral of the first kind}.


\begin{theorem}[{\cite[Theorem~2.3]{BoSk16}}]
 For any numbers $A_1,\dots,A_g\in\C$ there exists a discrete holomorphic 
integral with $A$-periods $A_1,\dots,A_g$. It is unique up to a 
constant.
\end{theorem}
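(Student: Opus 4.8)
The plan is to reduce the statement to finite-dimensional linear algebra on the space of discrete harmonic multi-valued functions, with the Interpolation Lemma used to pin down the sign of the energy. First I would note that $E_T(u)=E(I_Tu)\ge 0$ holds for \emph{every} multi-valued function $u:\widetilde V\to\R$: the per-triangle identity \eqref{eqdefc}, and its analogues in the other two cases, involve only the well-defined differences $u(x)-u(y)$, so they survive the passage to the universal cover, and summing them over the finite face set $F$ gives $E_T(u)=\sum_{\Delta_f\in F}\int_{\Delta_f}|\nabla I_Tu|^2\ge 0$. Moreover, $E_T(u)=0$ forces $\nabla I_Tu\equiv 0$ on each face, hence $I_Tu$ and therefore $u$ is constant, because $T$ (equivalently ${\cal R}$) is connected. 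Thus $E_T$ is a positive semidefinite quadratic form whose kernel is exactly the constants.

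Second, I would establish existence and uniqueness of discrete harmonic multi-valued functions with prescribed periods. For real numbers $a_1,\dots,a_g,b_1,\dots,b_g$, the set $\mathcal B$ of multi-valued $u:\widetilde V\to\R$ with $\alpha_k$-period $a_k$ and $\beta_k$-period $b_k$ is a nonempty affine subspace of the finite-dimensional space of multi-valued functions modulo constants (nonempty because one may prescribe the jumps across a cut system explicitly), and it is invariant under adding constants. By the first step $E_T|_{\mathcal B}$ is coercive modulo constants, so it attains its minimum at some $u_*$, unique up to an additive constant; the first variation along single-valued test functions $\phi:V\to\R$ (which have vanishing periods) together with summation by parts shows that $u_*$ satisfies \eqref{eqdefharm} at every vertex. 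Conversely, the difference of two discrete harmonic functions with equal periods is single-valued and discrete harmonic, hence of zero energy by summation by parts, hence constant by the first step. So the period map identifies the space $\mathcal H$ of discrete harmonic multi-valued functions modulo constants with $\R^{2g}$; in particular $\dim_\R\mathcal H=2g$. Finally, discrete harmonicity of $u$ is precisely the integrability condition that $\sum c(e)(u(h_e)-u(t_e))$ vanishes around every vertex, so each $u\in\mathcal H$ admits on the simply connected $\widetilde T$ a conjugate $v$, unique up to an additive constant, determined by \eqref{eqdefconj}; write $P_{\alpha_k}(u),P_{\beta_k}(u)$ for the periods of $u$ and $Q_{\alpha_k}(u),Q_{\beta_k}(u)$ for those of $v$.

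The heart of the proof is the discrete Riemann bilinear relation
\[E_T(u)=\sum_{k=1}^g\bigl(P_{\alpha_k}(u)\,Q_{\beta_k}(u)-P_{\beta_k}(u)\,Q_{\alpha_k}(u)\bigr),\qquad u\in\mathcal H,\]
obtained by rewriting $E_T(u)=\sum_{e\in E}(v(l_e)-v(r_e))\,(u(h_e)-u(t_e))$ through \eqref{eqdefconj} and evaluating this cup-product type pairing by cutting $\widetilde T$ along a fundamental polygon for the $\alpha\beta$-system and summing by parts — the discrete counterpart of $\int_{\cal R}|\nabla u|^2=\int_{\cal R}du\wedge dv$. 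Carrying out this combinatorial bookkeeping correctly is the step I expect to be the main obstacle; everything around it is soft. Granting it, I consider the real-linear map $\Lambda:\mathcal H\to\C^g$, $\Lambda(u)=\bigl(P_{\alpha_k}(u)+i\,Q_{\alpha_k}(u)\bigr)_{k=1}^g$. If $\Lambda(u)=0$, then all $P_{\alpha_k}(u)$ and $Q_{\alpha_k}(u)$ vanish, so the relation gives $E_T(u)=0$ and hence $u=\const$; thus $\Lambda$ is injective, and since $\dim_\R\mathcal H=2g=\dim_\R\C^g$ it is an isomorphism.

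Then the theorem follows. Given $A_1,\dots,A_g\in\C$, I choose the $u\in\mathcal H$ with $\Lambda(u)=(A_1,\dots,A_g)$, unique up to a constant, and a conjugate $v$, unique up to a constant. Then $f:=(u,v)$ is a multi-valued discrete analytic function, that is, a discrete holomorphic integral, and by construction $\Re f=u$ has $\alpha_k$-period $\Re A_k$ while $\Im f=v$ has $\alpha_k$-period $Q_{\alpha_k}(u)=\Im A_k$, so $f$ has $A$-periods $A_1,\dots,A_g$ (and $B$-periods $P_{\beta_k}(u)+i\,Q_{\beta_k}(u)$). For uniqueness, any discrete holomorphic integral $f'=(u',v')$ with $A$-periods $A_1,\dots,A_g$ has $u'$ discrete harmonic with $P_{\alpha_k}(u')=\Re A_k$ and $Q_{\alpha_k}(u')=\Im A_k$, i.e.\ $\Lambda(u')=\Lambda(u)$, so $u'-u$ is constant by injectivity of $\Lambda$; then $v'-v$ is conjugate to a constant, hence itself constant, and therefore $f'-f=\const$.
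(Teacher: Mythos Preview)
The paper does not prove this theorem; it is quoted from \cite[Theorem~2.3]{BoSk16} without argument, so there is no in-paper proof to compare against. Your outline is essentially the standard argument, and presumably close to what \cite{BoSk16} does: positive semidefiniteness of $E_T$ with kernel the constants, the $2g$-dimensionality of multi-valued discrete harmonic functions modulo constants via energy minimisation, existence of the conjugate on the universal cover, and then the discrete Riemann bilinear relation to show that the $A$-period map $\Lambda:\mathcal H\to\C^g$ is injective and hence bijective by a dimension count.

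The structure is sound and the conclusion follows once the bilinear relation is in hand. Two comments. First, the step you yourself flag as the obstacle --- the combinatorial derivation of $E_T(u)=\sum_k\bigl(P_{\alpha_k}(u)\,Q_{\beta_k}(u)-P_{\beta_k}(u)\,Q_{\alpha_k}(u)\bigr)$ --- is genuinely the only nontrivial ingredient; it is known and in fact underlies Lemma~\ref{lemEquad}, which the present paper also imports from \cite{BoSk16}, so you are not adding a new assumption, only deferring to the same source. Second, your use of the Interpolation Lemma~\ref{lemInterpolation} to obtain $E_T(u)=E(I_Tu)\ge 0$ is specific to this paper's weights (designed precisely so that this identity holds, including on the boundary triangles $F_\varrho$); in the polyhedral setting of \cite{BoSk16} the positivity follows directly from the per-triangle cotan identity on flat faces. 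Either route gives the kernel-equals-constants statement you need.
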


For each $l=1,\dots,g$ denote by $\phi^l_T=(\text{Re} \phi^l_T 
:\widetilde{V}\to\R, \;
\text{Im} \phi^l_T:\widetilde{F}\to\R)$ the unique (up to constant) discrete 
holomorphic integral with $A$-periods given by 
$A_k=\delta_{kl}$, 
where $k=1,\dots,g$. The $g\times g$-matrix $\Pi_T$ whose $l$-th column is
formed by the $B$-periods of $\phi^l_T$, where $l=1,\dots,g$, is called the 
{\em period matrix} of the triangulation $T$.

\subsection{Convergence of energy and discrete period matrices}\label{subsecConv1}

So far, we have defined our notions like discrete energy for a rather general 
class of triangulations. In view of our convergence results, we now make some 
additional assumptions. In orde to measure distances and other 
metric properties, we always consider the images of the triangles in $\C$ under 
the 
projection $Pr_{\cal R}$. By our assumptions above, these are Euclidean 
triangles if they are contained in $B_\varrho(0)$ and we use the  
standard metric in $\C$. We also apply this metric for boundary triangles in 
$F_\varrho$. For triangles which are mapped to $\hC\setminus B_\varrho(0)$, we 
consider their image under the map $z\mapsto 1/z$ and then use again the 
standard metric. Alternatively, we could work on $\hC$ with the 
chordal metric.

First we determine the 
maximal distance between two vertices in a triangle which lies inside 
$B_{\varrho}(0)$ or on the boundary $F_{\varrho}$ and the maximal edge 
length of the triangles outside $B_{\varrho}(0)$ after the mapping by 
$1/z$. The maximum of these two numbers is called {\em maximal edge 
length} and denoted by $h=h(T)$. 

Furthermore, we suppose that near all branch points of
$\cal R$ the edge lengths are adapted to the singularity which then guarantees 
an approximation error of order $h$. 
In particular, for every branch point $v_O\in V$ with $f(v_O)\not=\infty$, 
choose a radius $r_O$ such that the disks of these radii are 
disjoint for different points $O=Pr_{\cal R}(v_O)\in\C$. Furthermore,  we assume 
that 
all these disks are all contained in $B_{\varrho}(0)$. 
Let ${\cal C}_O^{\cal R}$ be the 
neighborhood of $v_O$ in $\cal R$ which projects onto this disc 
$B_{r_O}(O)=Pr_{\cal R}({\cal C}_O^{\cal R})$.
If $O=\infty$, we first apply the mapping $z\mapsto 1/z$ and assume that 
$r_O=1/(2\varrho)$.
For all branch points we already have a natural complex structure and 
charts. In particular if $O\not=\infty$, we consider the chart 
$g_O(z)=(z-O)^{\gamma_O}$, so $g_O\circ Pr_{\cal R}$
maps  ${\cal C}_O^{\cal R}$ onto a neighborhood of the origin in $\C$. If 
$O=\infty$, consider $g_O(z)=1/z^{\gamma_O}$ instead.
We can also introduce ``polar coordinates'' 
$(r,\phi)$ on ${\cal C}_O^{\cal R}$ with the origin at the vertex $O$. We map 
all vertices of $T$ in ${\cal C}_O^{\cal R}$ to a neighborhood of the origin in 
$\C$ by the chart 
$g_O(r,\phi):=r^{\gamma_O}\text{e}^{i\gamma_O\phi}$.

In any case, consider all triangles in ${\cal C}_O^{\cal R}$ which are incident 
to $v_O$. The {\em aperture of $O$} is the sum 
of all the face angles at $O$ of the projection of these triangles. Denote by 
$\gamma_O$ the 
value $2\pi$ divided by the aperture. Note that for branch points we have 
$\gamma_O\in\{1/n: n=2,3,4,\dots\}$, so $\gamma_O\leq 1/2$.

We demand that the triangles in the neighborhood ${\cal 
C}_O$ of $O$ have an {\em adapted size}: as an additional condition, we demand 
that 
\begin{itemize}
\item the images under the chart $g_O$ of any two 
incident vertices in ${\cal C}_O$ have maximum distance $h$. 
\end{itemize}

In particular for $O\not=\infty$, consider any triangle 
$\Delta$ in $Pr_{\cal R}({\cal C}_O^{\cal R})$ whose vertex $z$ nearest to $O$ 
satisfies $|Oz|\geq 
h_O=h^{1/\gamma_O}$, where $|Oz|$ denotes the distance of $z$ to $O$ in $\C$. 
Then we deduce from our assumption that the maximal edge length in $\Delta$ 
is smaller than $h\cdot |Oz|^{1-\gamma_O}$.

In Section~\ref{SecPoly} we explain how our ideas can be used for polyhedral 
surfaces with more general 
conical singularities with $0<\gamma_O\leq 1/2$.

We will always assume that the maximal edge 
length $h$ is strictly smaller than $\max\{ \varrho/4, r_O/4, 1\}$.

A triangulation $T$ which satisfies these additional properties for all its 
branch points will be called {\em adapted triangulation}.

\begin{theorem}[Energy convergence]\label{lemConvEnergy}
 For each $\delta>0$ and each smooth multi-valued harmonic function 
$u:\widetilde{{\cal R}}\to \R$ there are two constants 
$\Const_{u,\delta,{\cal R},\varrho}, \const_{u,\delta,{\cal R},\varrho}>0$ 
such that for any 
adapted triangulation $T$ of ${\cal R}$ with maximal edge length 
$h<\const_{u,\delta,{\cal R},\varrho}$ and minimal face angle $>\delta$ we have
\[|E_T(u|_{\widetilde{V}})-E(u)|\leq \Const_{u,\delta,{\cal R},\varrho}\cdot 
h.\]
\end{theorem}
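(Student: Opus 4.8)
The plan is to compare the discrete energy $E_T(u|_{\widetilde V})$ with the Dirichlet energy $E(u)$ by first passing through the energy of the interpolation function $I_T u$. By the Interpolation lemma, $E_T(u|_{\widetilde V}) = E(I_T u)$, so it suffices to estimate $|E(I_T u) - E(u)|$, and since energy splits over faces it is enough to bound the contribution of each triangle $\Delta_f$ and sum up. I would split the faces of $T$ into three groups: (a) the ``regular'' triangles — those lying in $B_\varrho(0)$ or outside $B_\varrho(0)$ — that are not too close to any branch point, say whose nearest vertex to the relevant branch point $O$ is at distance $\geq h_O = h^{1/\gamma_O}$ (plus the boundary triangles in $F_\varrho$, which by hypothesis are uniformly away from branch points); (b) the triangles inside the branch-point neighborhoods ${\cal C}_O$ that are close to $O$; and (c) a single innermost cap around each $O$. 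On the regular triangles, after applying the appropriate chart (identity, or $z\mapsto 1/z$, both conformal, so the Dirichlet integral is unchanged) the data is a genuine smooth harmonic function on a Euclidean triangle of diameter $\leq h$ with angles $\geq\delta$, and the standard linear-interpolation estimate gives $\bigl|\int_{\Delta_f}|\nabla I_T u|^2 - \int_{\Delta_f}|\nabla u|^2\bigr| \leq C\,\|u\|_{C^2(\Delta_f)}^2 \cdot h \cdot \area(\Delta_f)$; for the boundary triangles in $F_\varrho$ one uses Remark~\ref{remest} to see that the modified weights differ from the true cotan-weights by $O(h)$, so the same bound holds with an extra additive $O(h\cdot\area)$ term. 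Summing over all regular faces, the areas add up to at most $\area({\cal R})$ in the relevant charts, giving a total contribution $\leq \Const\cdot h$.

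The delicate part is the branch-point neighborhoods, where $u$ is not smooth in the $z$-chart: near $O$ it behaves like a smooth harmonic function of the ``uniformizing'' coordinate $w = g_O(z) = (z-O)^{\gamma_O}$, hence in the $z$-variable its gradient blows up like $|z-O|^{\gamma_O - 1}$ and its second derivatives like $|z-O|^{\gamma_O - 2}$. The adaptedness condition is designed exactly to cancel this: a triangle $\Delta$ in ${\cal C}_O$ whose nearest vertex is at distance $\rho \geq h_O$ from $O$ has edge length $\leq h\,\rho^{1-\gamma_O}$, so the per-triangle interpolation error, which scales like (second derivative bound)$\times$(diameter)$\times$(area), is of order $\rho^{\gamma_O-2}\cdot (h\rho^{1-\gamma_O})\cdot (h\rho^{1-\gamma_O})^2 = h^3\,\rho^{3(1-\gamma_O)+\gamma_O-2} = h^3\,\rho^{1-2\gamma_O}$. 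Comparing with the area of the annular region, which is $O(\rho\cdot d\rho)$ worth of triangles of area $\sim (h\rho^{1-\gamma_O})^2$, i.e.\ roughly $\rho^{2\gamma_O - 1}h^{-2}\cdot d\rho$ triangles at radius $\rho$, the total error summed over the dyadic annuli $h_O \leq \rho \leq r_O$ is $\sum h^3\rho^{1-2\gamma_O}\cdot \rho^{2\gamma_O-1}h^{-2}/\rho\; \rho\,d\rho \sim h\int_{h_O}^{r_O} d\rho = O(h)$ — the exponents conspire so the bound is uniform in $h$. This is the computation I expect to be the main obstacle, since one must carry it out for both $u$ and $I_T u$ (bounding the gradient of the interpolant, not just of $u$) and handle the direction-dependence of $I_Tu$ inside the boundary triangles in $F_\varrho$ carefully.

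It remains to handle the innermost cap (c): the union of triangles incident to $v_O$ together with everything inside radius $h_O$. In the uniformizing chart $w = g_O(z)$ this region maps to a neighborhood of the origin of radius $\sim h_O^{\gamma_O} = h$, on which $u$ is smooth (in $w$); its Dirichlet energy there is $O(h^2)$, and the interpolated energy over the same cap is likewise $O(h^2)$ — here one uses that the chart-images of incident vertices have mutual distance $\leq h$, so $I_Tu$ has gradient $O(1)$ in the $w$-chart over a region of area $O(h^2)$. Thus the cap contributes $O(h^2) \ll h$. Finally one sums the three contributions over all faces and all (finitely many) branch points $O$, collects the constants — which depend on $u$ through $\|u\|_{C^2}$ on compact pieces away from branch points and through the smooth harmonic function representing $u$ near each $O$, on $\delta$ through the shape-regularity of the triangles, and on ${\cal R}$ and $\varrho$ through the geometry — and sets $\const_{u,\delta,{\cal R},\varrho}$ small enough that $h_O = h^{1/\gamma_O} < r_O$ and all the smallness hypotheses used above are in force. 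This yields $|E_T(u|_{\widetilde V}) - E(u)| \leq \Const_{u,\delta,{\cal R},\varrho}\cdot h$, as claimed.
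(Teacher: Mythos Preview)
Your plan is essentially the paper's own proof: the same decomposition into (a) triangles away from branch points and boundary, (b) the annular shells $T_O\setminus T_{O,h_O}$ around each branch point, and (c) the innermost cap $T_{O,h_O}$, together with separate treatment of the boundary strip $F_\varrho$; the paper assembles these via Lemmas~\ref{lemTriangEst}--\ref{lemEstF0} exactly as you outline.

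One slip to fix when you write it out: the per-triangle interpolation error on a Euclidean triangle (Lemma~\ref{lemTriangEst}(i)) scales like $\bigl(\|D^1u\|+l_{\max}\|D^2u\|\bigr)\cdot l_{\max}\|D^2u\|\cdot\area(\Delta)$, not ``(second derivative)$\times$(diameter)$\times$(area)'' --- your expression is dimensionally off. Plugging in the derivative bounds $\|D^ku\|\lesssim\rho^{\gamma_O-k}$ from Lemma~\ref{lemDerivEst} and the adapted edge length $l_{\max}\leq h\rho^{1-\gamma_O}$ gives a per-triangle error $\lesssim h\,\rho^{\gamma_O-2}\,\area(\Delta)\approx h\int_\Delta\rho^{\gamma_O-1}\,d\rho\,d\phi$, which still integrates to $O(h)$ over the cone, so your conclusion is unaffected. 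For the boundary triangles in $F_\varrho$ the paper simply bounds $E_{T_\Delta}(u)$ and $E_\Delta(u)$ separately by $\Const\cdot\area(\Delta)$ and uses that the total area of the strip is $O(h)$; your perturbative route via Remark~\ref{remest} also works but is more effort than needed.
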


The assumption on the minimal face angle in the theorem cannot be dropped, 
see~\cite[Example~4.14]{BoSk16}.

Based on energy estimates from this theorem, we deduce convergence of discrete 
period matrices. To this end, recall that ${\cal R}$ is a Riemann surface which 
is a branched covering of $\hC$. Therefore, a {\em basis of 
holomorphic integrals} $\phi^l_{\cal R}:\widetilde{{\cal R}}\to 
\C$ and the {\em 
period matrix} $\Pi_{\cal R}$ of ${\cal R}$ are defined analogously to the 
discrete case above.

\begin{theorem}[Convergence of period matrices]\label{theoPeriodConv}
 For each $\delta>0$ there are two constants 
$\Const_{\delta,{\cal R},\varrho}$, $\const_{\delta,{\cal R},\varrho}>0$ such 
that 
for any adapted triangulation $T$ of ${\cal R}$ with maximal edge length 
$h<\const_{u,\delta,{\cal R},\varrho}$ and minimal face angle $>\delta$ we have
\begin{equation}\label{eqEstPi}
\|\Pi_T-\Pi_{\cal R}\|\leq \Const_{\delta,{\cal R},\varrho}\cdot h.
\end{equation}
\end{theorem}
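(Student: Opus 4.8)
\textbf{Proof proposal for Theorem~\ref{theoPeriodConv}.}

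The plan is to deduce convergence of the period matrices from the energy convergence of Theorem~\ref{lemConvEnergy} by a variational/extremal characterization of the discrete holomorphic integrals, following the strategy of~\cite{BoSk16}. First I would recall that the discrete holomorphic integral $\phi^l_T$ with $A$-periods $\delta_{kl}$ is characterized as the unique (up to constant) discrete analytic multi-valued function whose real part $u^l_T=\text{Re}\,\phi^l_T$ minimizes the discrete Dirichlet energy $E_T(\cdot)$ among all multi-valued functions on $\widetilde V$ with prescribed real $A$-periods $\text{Re}(A_k)=\delta_{kl}$ and $\text{Im}(A_k)=0$; the imaginary part $v^l_T$ is then the harmonic conjugate on the faces. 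The continuous integral $\phi^l_{\cal R}$ has the analogous characterization with the smooth Dirichlet energy $E(\cdot)$. The $B$-periods, i.e.\ the entries of the period matrix, should then be expressed as bilinear energy-type pairings of these extremal functions, so that an $O(h)$ error on energies transfers to an $O(h)$ error on periods.

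The key steps, in order: (1) Fix $l$ and restrict the smooth extremal $u^l_{\cal R}=\text{Re}\,\phi^l_{\cal R}$ to the vertices $\widetilde V$ to get a competitor $\bar u = u^l_{\cal R}|_{\widetilde V}$ for the discrete problem; it has the correct real $A$-periods (they are integer-valued, hence preserved under sampling). By minimality $E_T(u^l_T)\le E_T(\bar u)$, and by Theorem~\ref{lemConvEnergy} applied to $u^l_{\cal R}$ we get $E_T(\bar u)\le E(u^l_{\cal R})+\Const\cdot h$; hence $E_T(u^l_T)\le E(u^l_{\cal R})+\Const\cdot h$. (2) For the reverse inequality, take the continuous interpolant $I_T u^l_T$: by the Interpolation Lemma~\ref{lemInterpolation}, $E(I_T u^l_T)=E_T(u^l_T)$, and $I_T u^l_T$ is an admissible competitor for the smooth extremal problem (its $A$-periods match, since interpolation along the cycles $\alpha_k$ preserves the increments), so $E(u^l_{\cal R})\le E(I_T u^l_T)=E_T(u^l_T)$. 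Combining, $|E_T(u^l_T)-E(u^l_{\cal R})|\le\Const\cdot h$. (3) More importantly, the same argument applied to sums and differences $u^l_T\pm u^k_T$ versus $u^l_{\cal R}\pm u^k_{\cal R}$ controls the \emph{mixed} energy pairings, i.e.\ one gets $|\mathcal{E}_T(u^l_T,u^k_T)-\mathcal{E}(u^l_{\cal R},u^k_{\cal R})|\le\Const\cdot h$ for the polarized bilinear forms. (4) Express the $B$-periods of $\phi^l_T$ through these pairings: by the discrete analogue of the Riemann bilinear identity (which holds on the discrete surface by summation by parts, using~\eqref{eqdefconj} and the combinatorics of the homology basis $\alpha_k,\beta_k$), the $(k,l)$ entry of $\Pi_T$ equals a combination of discrete energy-type pairings of $u^1_T,\dots,u^g_T$ — and similarly for $\Pi_{\cal R}$ via the classical bilinear identity. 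Step (3) then yields the estimate $\|\Pi_T-\Pi_{\cal R}\|\le\Const\cdot h$ after noting the pairing matrix (the "real part" of the period matrix, essentially the matrix of $A$-period energy couplings) is invertible with norm bounded independently of $h$ for $h$ small, so one can solve for the $B$-periods stably.

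The main obstacle I expect is Step (4): making the discrete Riemann bilinear identity precise on the adapted triangulation and checking that the resulting linear-algebraic relation between $\Pi_T$ and the energy pairings is stable as $h\to 0$. Concretely, one must verify that the combinatorial period/intersection pairing on $\widetilde T$ reproduces the topological intersection form of the $\alpha_k,\beta_k$ exactly (independent of $h$), handle the multi-valuedness carefully when summing by parts across the fundamental-domain boundary, and ensure the matrix one inverts — which plays the role of $\Re\Pi$ or $\Im\Pi$ — is uniformly non-degenerate; this non-degeneracy ultimately comes from positive-definiteness of the smooth Dirichlet form on the relevant period classes plus the $O(h)$ closeness from Step (3), so for $h<\const_{\delta,{\cal R},\varrho}$ the discrete pairing matrix is invertible with uniformly bounded inverse. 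A secondary technical point is that Theorem~\ref{lemConvEnergy} is stated for a fixed smooth function, so the constants $\Const_{u,\delta,{\cal R},\varrho}$ must be applied only to the finitely many functions $u^l_{\cal R}$ and their $O(g^2)$ pairwise combinations — harmless, but it forces the final constants to depend on $\cal R$ (through the chosen basis) as the statement indeed records.
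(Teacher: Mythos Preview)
Your strategy is essentially the paper's: Steps~(1)--(2) are exactly Lemma~\ref{lemEnergyFormConv} (sandwich the discrete and smooth extremal energies using Lemma~\ref{lemdiscVari}, Lemma~\ref{lemInterpolation}, and Theorem~\ref{lemConvEnergy}), and Step~(4) is the content of Lemma~\ref{lemEquad} together with the block-matrix algebra carried out in the proof of Theorem~\ref{theoPeriodConv}.

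Two points of comparison are worth recording. First, the paper does not parametrize by the $g$ real $A$-periods alone; it parametrizes by the full vector $P\in\R^{2g}$ of real parts of \emph{all} $A$- and $B$-periods (Theorem~\ref{theoAbelExist}), so that $u_{T,P}$ is the unique discrete harmonic function with all $2g$ periods prescribed and Lemma~\ref{lemdiscVari} applies directly. Your phrasing ``prescribed real $A$-periods $\mathrm{Re}(A_k)=\delta_{kl}$ and $\mathrm{Im}(A_k)=0$'' conflates the complex periods of $\phi^l_T$ with the real periods of $u^l_T=\mathrm{Re}\,\phi^l_T$; the latter has $2g$ real periods, and it is not the minimizer over functions with only its $A$-periods fixed. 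This is harmless once you switch to the paper's $2g$-dimensional parametrization, and your polarization Step~(3) then becomes precisely the statement that the quadratic forms $P\mapsto E_T(u_{T,P})$ and $P\mapsto E(u_{{\cal R},P})$ have $O(h)$-close matrices. Second, the ``discrete Riemann bilinear identity'' you anticipate needing is already packaged as the explicit block-matrix formula of Lemma~\ref{lemEquad} (quoted from~\cite{BoSk16}); the paper then reads off $\|\mathrm{Re}\,\Pi_T-\mathrm{Re}\,\Pi_{\cal R}\|$ and $\|\mathrm{Im}\,\Pi_T-\mathrm{Im}\,\Pi_{\cal R}\|$ by elementary manipulations, with the uniform invertibility you worry about coming simply from $\|(\mathrm{Im}\,\Pi_{T^*})^{-1}-(\mathrm{Im}\,\Pi_{\cal R})^{-1}\|\le\Const\cdot h$. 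So your obstacle in Step~(4) is real but already resolved in~\cite{BoSk16}, and no fresh combinatorial work on the intersection form is needed here.
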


Both theorems are proved in Section~\ref{SecConvProof}.

\subsection{Convergence of discrete holomorphic integrals}\label{SecConvAbel}

For the next theorem, we need some additional notions similarly as 
in~\cite{BoSk16}. The discrete holomorphic
integral $\phi^l_T=(\text{Re} \phi^l_T :\widetilde{V}\to\R, \;
\text{Im} \phi^l_T:\widetilde{F}\to\R)$ is {\em normalized} at a vertex 
$z\in\widetilde{T}$ and a face $w\in\widetilde{F}$, if $\text{Re} 
\phi^l_T(z)=0=\text{Im} \phi^l_T(w)$. Similarly, we call a holomorphic 
integral $\phi^l_{\cal R}:\widetilde{{\cal R}}\to \C$ normalized at a point 
$z\in\widetilde{{\cal R}}$ if $\phi^l_{\cal R}(z)=0$.

Recall that a triangulation $T$ is {\em Delaunay}, if for every edge $e\in E$ 
we have $\alpha_e+\beta_e\leq \pi$.

Let $\{ T_n\}$ be a sequence of adapted triangulations of the surface ${\cal 
R}$ with maximal edge length $h<\max\{\varrho/4, r_0/4, 1 \}$. Such a sequence 
of adapted triangulations is called {\em non-degenerate uniform}, if there is a 
constant $\Const$, not depending on $n$, such that for each member of the 
sequence:
\begin{enumerate}
 \item[(A)] the angles of each triangle are greater than $1/\Const$.
\item[(D)] for each edge the sum of opposite angles in the two triangles 
containing  the edge is less than $\pi-1/\Const$. (In particular, the 
triangulation is Delaunay within $B_{\varrho}(0)$.)
\item[(U)] the number of vertices in an arbitrary intrinsic disk about $z$ of 
radius equal to the maximal edge length is smaller than $\Const$ if $z$ is not 
contained in any of the neighborhoods ${\cal C}_O$ of a singularity $O$. Within 
such a neighborhood ${\cal C}_O$, first map the vertices to a disc about the 
origin by the map $\zeta\mapsto (\zeta-O)^{\gamma_O}$ if $O\not=\infty$ (or 
$\zeta\mapsto 1/\zeta^{\gamma_O}$ if $O=\infty$). Then we require that 
after this mapping 
in each disk of radius equal to the maximal edge length the number of image 
points of vertices is smaller than $\Const$.
\end{enumerate}

A sequence of functions $f_n=(\text{Re} f_n :\widetilde{V}_n\to\R, \;
\text{Im} f_n:\widetilde{F}_n\to\R)$ {\em converges} to a function 
$f:\widetilde{{\cal R}}\to\C$ {\em uniformly on compact subsets} if for every 
compact set $K\subset\widetilde{{\cal R}}$ we have
\begin{equation*}
\max\limits_{z\in K\cap \widetilde{V}_n } |\text{Re} f_n(z)-\text{Re} f(z)| 
\to 0\quad \text{and}\
 \max\limits_{\substack{\Delta[x,y,z]\in F,\\ \Delta[x,y,z]\cap 
K\not=\emptyset}}
 |\text{Im} f_n(\Delta[x,y,z])-\text{Im} f(z)| \to 0 \quad\text{as }  
n\to\infty.
\end{equation*}

\begin{theorem}[Convergence of holomorphic integrals]\label{theoConvAbelInt}
 Let $\{ T_n\}$ be a sequence of non-degenerate uniform adapted triangulations 
of ${\cal R}$ with maximal edge length $h_n\to 0$ as $n\to\infty$. Let $z_n\in 
\widetilde{V}_n$ be a sequence of vertices converging to a point 
$z_0\in\widetilde{\cal R}$. 
Let $\Delta_n\in \widetilde{F}_n$ be a sequence of faces with its vertices 
converging to $z_0$. Then for each $1\leq l\leq g$ the discrete holomorphic
integrals $\phi^l_T=(\text{Re} \phi^l_T :\widetilde{V}\to\R,\; 
\text{Im} \phi^l_T:\widetilde{F}\to\R)$ normalized at $z_n$ and $\Delta_n$ 
converge uniformly on each compact set to the holomorphic integral 
$\phi^l_{\cal R}:\widetilde{{\cal R}}\to\C$ normalized at $z_0$.
\end{theorem}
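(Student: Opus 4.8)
The strategy is to deduce the uniform convergence of the discrete holomorphic integrals from the convergence of the period matrices (Theorem~\ref{theoPeriodConv}) together with the energy estimate (Theorem~\ref{lemConvEnergy}), following the pattern of~\cite{BoSk16}. First I would fix $l$ and consider the difference $e_n := \phi^l_{T_n}|_{\widetilde{V}_n} - (\text{values of } \phi^l_{\cal R}\text{ sampled on }\widetilde{V}_n)$, the latter taken componentwise on vertices and faces. Both the discrete integral $\phi^l_{T_n}$ and the sampled smooth integral $\phi^l_{\cal R}$ are (approximately) discrete holomorphic; more precisely the smooth one is discrete holomorphic up to an error, and by Theorem~\ref{theoPeriodConv} the two have $A$- and $B$-periods that agree up to $O(h_n)$. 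Hence, after subtracting a suitable multi-valued discrete analytic function with tiny periods (which costs only $O(h_n)$ by the existence/uniqueness Theorem for discrete holomorphic integrals), the residual is a genuinely single-valued discrete analytic function on the compact surface, i.e. a discrete analytic function on $\widetilde T_n$ that is invariant under all deck transformations. I would like to conclude that a single-valued discrete analytic function with small energy and normalized to vanish at $z_n$ must be uniformly small.

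The key quantitative input is an energy bound: I would show $E_{T_n}(\text{Re}\,e_n) \to 0$ and $E_{T_n}(\text{Im}\,e_n)\to 0$. This is where Theorem~\ref{lemConvEnergy} enters. Writing $u=\Re\phi^l_{\cal R}$, which is smooth multi-valued harmonic away from branch points, the energy of its restriction to $\widetilde V_n$ converges to $E(u)$; on the other hand the discrete integral minimizes discrete energy among discrete functions with the prescribed periods (this is the variational characterization behind the existence theorem), so $E_{T_n}(\Re\phi^l_{T_n})$ is at most $E_{T_n}(\Re\phi^l_{\cal R}|_{\widetilde V_n})$ up to the $O(h_n)$ correction needed to match periods exactly. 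Combining, $E_{T_n}(\Re\phi^l_{T_n})$ and $E_{T_n}(\Re\phi^l_{\cal R}|_{\widetilde V_n})$ both converge to $E(u)$, and the Cauchy--Schwarz/parallelogram identity for the quadratic form $E_{T_n}$ then forces the cross term, hence $E_{T_n}(\Re e_n)$, to zero; the same argument applies to the imaginary parts, using the conjugate-function relation~\eqref{eqdefconj} and the dual energy~\eqref{eqdefEconj}.

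Once the energies go to zero, I would run a pointwise control argument exactly as in~\cite{BoSk16}: for a single-valued discrete analytic function $e_n$ with $E_{T_n}(\Re e_n)+E_{T_n}(\Im e_n)\to 0$ and $\Re e_n(z_n)=0$, one bounds $|\Re e_n(x)-\Re e_n(z_n)|$ along a discrete path from $z_n$ to $x$ by a discrete Cauchy--Schwarz estimate, $\big(\sum_{[p,q]\in\text{path}}|u(p)-u(q)|\big)^2 \le \big(\sum c([p,q])(u(p)-u(q))^2\big)\big(\sum 1/c([p,q])\big)$, where the first factor is $\le E_{T_n}(\Re e_n)$ and the second is controlled by the non-degeneracy assumptions (A),(D),(U): the cotan weights are bounded away from $0$ and $\infty$ up to the perturbation in Remark~\ref{remest}, and (U) bounds the number of vertices, hence the length of covering paths, in any fixed compact set. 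Near a branch point one uses the adapted-size condition and the chart $g_O$ so that the same estimate survives after the uniformizing power map; this is the place where the adapted triangulation is essential and, I expect, the main technical obstacle — one must check that the weighted path sums through the refined region around $O$ stay bounded, using $h_O=h^{1/\gamma_O}$ and the edge-length bound $h\,|Oz|^{1-\gamma_O}$ stated before the theorem. The imaginary parts are handled by the analogous estimate with $1/c(e)$ in place of $c(e)$, summing over dual paths. Undoing the normalization-matching subtraction (which was $O(h_n)$) and reassembling $\Re$ and $\Im$ then yields uniform convergence of $\phi^l_{T_n}$ to $\phi^l_{\cal R}$ on every compact $K\subset\widetilde{\cal R}$, which is the assertion.
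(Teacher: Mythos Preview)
Your energy argument in the first half is essentially correct and matches what the paper does (via Lemma~\ref{lemEnergyFormConv} and Corollary~\ref{corConvE}): the discrete energies converge, and one can indeed show $E_{T_n}(\Re e_n)\to 0$ by the orthogonality/minimization identity you describe.

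The gap is in the second half, the ``pointwise control''. Your Cauchy--Schwarz path estimate
\[
\Bigl(\sum_{[p,q]\in\text{path}}|u(p)-u(q)|\Bigr)^2 \le E_{T_n}(\Re e_n)\cdot \sum_{[p,q]\in\text{path}} \frac{1}{c([p,q])}
\]
is too weak. A path from $z_n$ to a fixed point $x$ in a compact set $K$ has Euclidean length bounded below, hence contains on the order of $1/h_n$ edges; since the weights $c(e)$ are bounded by (A) and (D), the second factor is $\sim 1/h_n$. Combined with $E_{T_n}(\Re e_n)=O(h_n)$, the product is only $O(1)$, not $o(1)$. Condition~(U) bounds the number of vertices in a disk of radius $h_n$, not in a fixed compact set, so it does not rescue this step. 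Moreover, the stronger annulus-based estimate (the Equicontinuity Lemma~\ref{lemEqui}) that would convert small energy into small oscillation requires the maximum principle, hence discrete harmonicity---but $e_n=\Re\phi^l_{T_n}-\Re\phi^l_{\cal R}|_{\widetilde V_n}$ is not discrete harmonic, since the sampled smooth function is not.

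The paper therefore does \emph{not} estimate $e_n$ directly. Instead it works with the discrete harmonic functions $u_{T_n,P_n}=\Re\phi^l_{T_n}$ themselves: bounded energy (Corollary~\ref{corConvE}) plus the Equicontinuity Lemma~\ref{lemEqui} give equicontinuity; Arzel\`a--Ascoli yields a subsequential uniform limit; Lemma~\ref{lemHarmLimit} shows the limit is harmonic away from branch points (and removable there); uniqueness of the multi-valued harmonic function with the given periods and normalization then forces the full sequence to converge to $\Re\phi^l_{\cal R}$ (Theorem~\ref{theoConvmultiharmonic}). The imaginary part follows by the Conjugate Functions Principle (Lemma~\ref{lemConjFunc}). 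So the route is compactness $+$ identification of the limit, not a direct quantitative bound on the error function.
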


This theorem is proved in Section~\ref{SecConvAbelProof}.

\section{Proof of convergence of energy and period matrices}\label{SecConvProof}

In this section, we prove convergence of the discrete energy to the 
corresponding 
Dirichlet energy and convergence of discrete period matrices to their 
continuous counterparts. The main ideas of the proof 
follow~\cite[Section~4]{BoSk16}, but we improve the estimates near branch 
points (which can be considered as special conical singularities) by using the 
additional properties of the adapted triangulations. 

We denote by $\Const_{a,b,c}$ a 
positive constant which only depends on the parameters $a,b,c$. 
The symbol $\Const$ may denote distinct constants at different places of the 
text, for example in $2\cdot \Const\leq \Const$. 
Furthermore, we set $\|D^ku(z)\|:=\max_{0\leq j\leq k}\left| \frac{\partial^k 
u}{\partial^jx\partial^{k-j}y}(z)\right|$.

In the following, all triangle which are considered are in $\C$ after 
application of $Pr_{\cal R}$.

\subsection{Energy estimates in a triangle}\label{SecEstTriangle}
First we consider only one triangle $\Delta$ of the triangulation 
$T$. Let $u:\Delta\to\R$ be a smooth function which smoothly extends to a 
neighborhood of $\Delta$. Let $I_Tu:\Delta\to\R$ be the corresponding 
interpolation function defined in Section~\ref{secDiscHarm}. Then we set
$E_{\Delta}(u)=\int_\Delta |\nabla u|^2 dxdy$ and 
$E_{T_\Delta}(u)=\int_\Delta |\nabla I_Tu|^2 dxdy$.
Denote by $\delta$ the minimal angle of the triangle $\Delta$.

\begin{lemma}[Energy approximation on a triangle]\label{lemTriangEst}
\begin{enumerate}[(i)]
 \item If the triangle $\Delta$ is contained in $B_{\varrho}$, denote by 
$l_{max}$ the maximum edge length of 
$\Delta$. Then
\begin{multline*}
|E_{T_\Delta}(u)-E_{\Delta}(u)| \\
\leq \Const_\delta \cdot \left( \max\limits_{w\in\Delta}\|D^1u(w)\| 
+l_{max}\cdot \max\limits_{w\in\Delta}\|D^2u(w)\| \right) \cdot 
l_{max}\cdot \max\limits_{w\in\Delta}\|D^2u(w)\| \cdot \area(\Delta).
\end{multline*}
\item If the triangle $\Delta$ is contained in the complement 
$\hC\setminus B_{\varrho}$ of $B_{\varrho}$, denote $\tilde{u}=u\circ 1/z$ 
and let $\widetilde{\Delta}$ be the image of $\Delta$ under the map $1/z$. 
Further, let $\tilde{l}_{max}$ the maximum edge length of the image 
triangle $\widetilde{\Delta}$. Then 
\begin{multline*}
|E_{T_\Delta}(u)-E_{\Delta}(u)| \\
 \leq \Const_\delta \cdot \left( 
\max\limits_{w\in\widetilde{\Delta}} \|D^1\tilde{u}(w)\| 
+\tilde{l}_{max}\cdot \max\limits_{w\in\widetilde{\Delta}} 
\|D^2\tilde{u}(w)\| 
\right) \cdot \widetilde{l}_{max}\cdot \max\limits_{w\in\widetilde{\Delta}} 
\|D^2\tilde{u}(w)\| \cdot \area(\widetilde{\Delta}).
\end{multline*}
\item If the triangle $\Delta$ is a boundary triangle in $F_{\varrho}$ then
\[|E_{T_\Delta}(u)-E_{\Delta}(u)| \leq \Const_{\delta,\varrho} \cdot 
\max\limits_{w\in\Delta}\|D^1u(w)\|^2\cdot \area(\Delta)\]
\end{enumerate}
\end{lemma}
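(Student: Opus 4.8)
The plan is to prove the three estimates separately. Cases (i) and (ii) are essentially the standard finite-element interpolation estimate on a single triangle, combined with the conformal invariance of the Dirichlet integral already used in Lemma~\ref{lemInterpolation}; case (iii) is the genuinely new ingredient and requires a direct pointwise bound on the gradient of the special interpolant constructed in Section~\ref{secDiscHarm}.

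For (i), the key input is the classical estimate that on a Euclidean triangle $\Delta$ with minimal angle $\geq\delta$ the linear interpolant $I_Tu$ of a $C^2$ function $u$ satisfies $\max_{w\in\Delta}|\nabla I_Tu(w)-\nabla u(w)|\leq\Const_\delta\, l_{max}\,\max_{w\in\Delta}\|D^2u(w)\|$. I would obtain this in the usual way: the error $e:=u-I_Tu$ vanishes at the three vertices, so by the mean value theorem the directional derivative of $e$ along each of the two edges issuing from a fixed vertex vanishes at an interior point of that edge; since $\nabla I_Tu$ is constant and $D^2e=D^2u$, transporting those two points to an arbitrary $w\in\Delta$ changes $\nabla e$ by at most $\Const\, l_{max}\max\|D^2u\|$, and the minimal-angle condition, via the law of sines (which makes all edge lengths of $\Delta$ comparable up to a $\delta$-dependent factor), lets us recover $\nabla e(w)$ from its two nearly independent directional components. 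Given this, I write $|\nabla I_Tu|^2-|\nabla u|^2=(\nabla I_Tu-\nabla u)\cdot(\nabla I_Tu+\nabla u)$, integrate over $\Delta$, and bound $|\nabla I_Tu+\nabla u|\leq|\nabla I_Tu-\nabla u|+2|\nabla u|\leq\Const_\delta\, l_{max}\max\|D^2u\|+2\max\|D^1u\|$; this yields exactly the stated inequality.

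Case (ii) then follows at once: by conformal invariance $E_\Delta(u)=\int_{\widetilde{\Delta}}|\nabla\tilde u|^2$, and by the very definition of $I_Tu$ on triangles outside $B_\varrho$ one has $E_{T_\Delta}(u)=\int_{\widetilde{\Delta}}|\nabla J|^2$ where $J$ is the linear interpolant of $\tilde u$ on $\widetilde{\Delta}$; since $z\mapsto 1/z$ is conformal, $\widetilde{\Delta}$ has the same angles as $\Delta$, in particular minimal angle $\geq\delta$, so part (i) applied to $\widetilde{\Delta}$ and $\tilde u$ is the claim. The main work is case (iii). Here $\Delta=\Delta[x,y,z]$ has a single vertex $x$ in $B_\varrho(0)$, the edges $[x,y]$ and $[x,z]$ are straight, and the third side is the circular arc $\wideparen{yz}$, a preimage under $1/z$ of a chord of $B_{1/\varrho}(0)$ and hence an arc whose curvature is bounded in terms of $\varrho$ alone; together with the bound $\max\{\varrho/2,1\}$ on the edge lengths of boundary triangles this makes $\Delta$ star-shaped with respect to $x$. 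Since $E_\Delta(u)=\int_\Delta|\nabla u|^2\leq 2\max_\Delta\|D^1u\|^2\area(\Delta)$ trivially, it is enough to prove the same kind of bound for $E_{T_\Delta}(u)=\int_\Delta|\nabla I_Tu|^2$. I would use polar coordinates $(s,\theta)$ centred at $x$: along the ray of angle $\theta$ the arc is reached at distance $L(\theta)$, and by construction $I_Tu(s,\theta)=u(x)(1-s/L(\theta))+g(\theta)\,s/L(\theta)$, where $g(\theta)$ is the value of $I_Tu$ at the foot point on the arc, itself a convex combination of $u(y)$ and $u(z)$ because the corresponding point of $[1/y,1/z]$ lies between $1/y$ and $1/z$. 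Differentiating gives $\partial_s I_Tu$ and $s^{-1}\partial_\theta I_Tu$ explicitly in terms of $u(x),g(\theta),g'(\theta),L(\theta),L'(\theta)$. The ingredients I need are: $|g(\theta)-u(x)|\leq\max(|u(y)-u(x)|,|u(z)-u(x)|)\leq\Const\,\mathrm{diam}(\Delta)\max_\Delta\|D^1u\|$ (smoothness of $u$ on a neighbourhood of $\Delta$ and connectedness of $\Delta$); $|g'(\theta)|\leq\Const_\varrho\, L(\theta)\max_\Delta\|D^1u\|$ (the tangential derivative of $I_Tu$ along the arc has size $|u(y)-u(z)|$ over the arc length, and the angular speed of the foot point has size $L(\theta)$); and $L(\theta)\asymp_\delta\mathrm{diam}(\Delta)$ with $|L'(\theta)|$ controlled by $L(\theta)$, $\delta$ and $\varrho$, both being consequences of the minimal-angle condition (which keeps $x$ at distance $\gtrsim_\delta\mathrm{diam}(\Delta)$ from the opposite side and forces the arc to leave $y$ and $z$ at angle $\geq\delta$ from the straight edges) and of the bounded curvature of the arc. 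Combining these yields the pointwise bound $|\nabla I_Tu|\leq\Const_{\delta,\varrho}\max_\Delta\|D^1u\|$ on $\Delta$; integrating over $\Delta$ and using $\mathrm{diam}(\Delta)^2\leq\Const_\delta\area(\Delta)$ (again from the minimal angle) gives $E_{T_\Delta}(u)\leq\Const_{\delta,\varrho}\max_\Delta\|D^1u\|^2\area(\Delta)$, and the triangle inequality with the trivial bound on $E_\Delta(u)$ completes case (iii).

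I expect the delicate point to be the plane geometry in (iii): checking that the minimal-angle hypothesis really does force $L(\theta)\asymp_\delta\mathrm{diam}(\Delta)$ and a uniform bound on $L'(\theta)/L(\theta)$ in the presence of the circular-arc side, and in particular handling rays from $x$ that are nearly tangent to the arc near $y$ or $z$ — it is the angle condition at the vertices $y$ and $z$ that rules this out. Everything else is routine Taylor expansion and bookkeeping of the integrals; the explicit constants for the boundary triangles are collected in Appendix~\ref{secApp1}.
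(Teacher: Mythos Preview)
Your proposal is correct and follows essentially the same strategy as the paper: part~(i) is the standard interpolation estimate (which the paper simply cites from~\cite{BoSk16}), part~(ii) is reduced to~(i) by conformal invariance exactly as you do, and part~(iii) is handled by bounding $E_\Delta(u)$ and $E_{T_\Delta}(u)$ separately.

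The only difference worth noting is in the execution of~(iii). The paper parametrizes the boundary triangle by $(\tau,\sigma)\mapsto x+\sigma(s(\tau)-x)$ with the explicit arc parametrization $s(\tau)=yz/(z+\tau(y-z))$ (see formula~\eqref{eqestFrho} in Appendix~\ref{secApp1}), and then bounds the resulting integrands via the estimates already developed for Remark~\ref{remest}; this has the advantage that the same computation simultaneously yields the explicit edge-weight formulas~\eqref{eqCxy}--\eqref{eqCzx}. Your polar-coordinate approach is equivalent but more intrinsic: the key geometric input you isolate, that the incidence angle between the ray from $x$ and the arc is bounded below by a $(\delta,\varrho)$-constant (equivalently $|L'(\theta)|/L(\theta)\leq\Const_{\delta,\varrho}$), is exactly the content of the paper's bound $|\mathrm{Im}((s(\tau)-x)\overline{s'(\tau)})|\geq |s(\tau)-x||s'(\tau)|\sin\delta$ in Appendix~\ref{secApp1}. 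So the ``delicate point'' you flag is genuine and is precisely the place where the paper also leans on the combined hypotheses on the angles of both the arc-sided and the Euclidean triangle with vertices $x,y,z$.
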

\begin{proof}
\begin{enumerate}[(i)]
 \item For triangles contained in $B_{\varrho}$ this is Lemma~4.4 
in~\cite{BoSk16}.
\item For triangles contained in $\hC\setminus B_{\varrho}$, note that the 
discrete energy is actually defined using the image of $\Delta$ under the map 
$1/z$ and the corresponding function $\tilde{u}=u\circ 1/z$. By 
conformality, the smooth
Dirichlet energy can also be considered on $\widetilde{\Delta}$. Therefore, the 
same arguments as in~(i)
apply.
\item For boundary triangles we estimate the discrete and the smooth energy 
separately. For the smooth energy, we have  $E_{\Delta}(u)\leq \Const \cdot 
\max\limits_{w\in\Delta}\|D^1u(w)\|^2\cdot \area(\Delta)$.
For the discrete energy, an estimate of $E_{T_\Delta}(u)$ is contained in  
Appendix~\ref{app2}, see in particular~\eqref{eqestEDelta}.
\end{enumerate}
\end{proof}

\subsection{Energy estimates near a a branch point}\label{SecEstEnergy}

Let $v_O\in\cal R$ be a branch point of $\cal R$ with $\gamma_O\leq 1/2$. 
In this subsection we only consider those triangles of the adapted 
triangulation $T$ which are completely contained in the neighborhood ${\cal 
C}_O^{\cal R}$. Denote by $T_O$ the connected component of these triangles 
which 
contains $v_O$. For the estimate of the difference of energies for these 
triangles we consider in particular $E_{T_O}(u)=\sum_{\Delta\in F_O} 
E_{T_\Delta}(u)$ and $E_{S_O}(u)=\sum_{\Delta\in F_O} 
E_{\Delta}(u)$, where $F_O$ denotes the set of triangles in $T_O$ and $S_O$ is 
the neighborhood of $v_O$ covered by these triangles.

If $O=\infty$, we compose as above the map $Pr_{\cal R}$ with 
$z\mapsto 1/z$ and consider the corresponding function $u\circ 1/z$ instead of 
$u$. Then 
the following reasoning also applies to the image branch point at the 
origin and its $1/(2\varrho)$-neighborhood.

As the partial derivatives of $u$ (considered in a chart) are not necessarily 
bounded near the vertex $O$, we consider triangles in a 'very small' 
neighborhood of $O$ separately. Let ${S}_{O,h_O}$ be the union of faces of 
$T_O$ whose images under $Pr_{\cal R}$ intersect the disc of radius $h_O:= 
h^{1/\gamma_O}$ about $O$ and let 
$T_{O,h_O}$ be the restriction of $T_O$ to ${S}_{O,h_O}$. Denote by $F_{O,h_O}$ 
the set of faces of $T_{O,h_O}$.
%
Note that we use polar coordinates $(\rho,\phi)$ as introduced 
in Subsection~\ref{subsecConv1} as a chart for ${\cal C}_O^{\cal R}$.


\begin{lemma}[Derivative Estimation Lemma, 
{\cite[Lemma~4.5]{BoSk16}}]\label{lemDerivEst}
For each $w=(\rho,\phi)\in {\cal C}_O^{\cal R}$ such that $w\not=O$ we have 
\[\|D^1u(z)|_{w}\|\leq \const_{u,r_O,\gamma_O} \cdot \rho^{\gamma_O-1} \qquad 
\text{and}\qquad  \|D^2u(z)|_{w}\|\leq \const_{u,r_O,\gamma_O} \cdot 
\rho^{\gamma_O-2}.\]
\end{lemma}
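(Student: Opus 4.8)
The plan is to pass to the uniformizing chart at the branch point, in which $u$ is a genuine smooth (indeed real-analytic) function with bounded derivatives, and then transport these bounds back to the coordinate coming from $Pr_{\cal R}$ by the chain rule. This is essentially the argument of \cite[Lemma~4.5]{BoSk16}.

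First I would dispose of the case $O=\infty$: precomposing with $z\mapsto 1/z$ is a biholomorphism near $\infty$, so it carries the harmonic function $u$ to a harmonic function near the origin with the same aperture at the corresponding branch point, and the estimate for $O\neq\infty$ then applies verbatim. So assume $O=f(v_O)\in\C$. On each face of $T_O$ the coordinate $z$ obtained from $Pr_{\cal R}$ is a valid local coordinate, $\rho=|z-O|$, and the uniformizing chart is $\zeta:=g_O=(z-O)^{\gamma_O}$, so $z-O=\zeta^{1/\gamma_O}$ and $|\zeta|=\rho^{\gamma_O}$. The neighbourhood ${\cal C}_O^{\cal R}$ is a branched disc over $B_{r_O}(O)$, hence simply connected, so the multi-valued function $u$ restricts there to a single-valued function, which is smooth and harmonic in the chart $\zeta$. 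Since ${\cal R}$ is compact, the derivatives of $u$ in the $\zeta$-coordinate — which are single-valued on ${\cal R}$ and extend smoothly across $\zeta=0$ — are bounded on $\overline{{\cal C}_O^{\cal R}}$; denote these bounds by $\|D^1u\|_\zeta,\|D^2u\|_\zeta\le \const_{u,r_O,\gamma_O}$.

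Now I would apply the chain rule. As $\zeta$ depends holomorphically on $z-O$ with $\bigl|\tfrac{\partial\zeta}{\partial z}\bigr|=\gamma_O\,\rho^{\gamma_O-1}$ and $\bigl|\tfrac{\partial^2\zeta}{\partial z^2}\bigr|=\gamma_O(1-\gamma_O)\,\rho^{\gamma_O-2}$, one gets (up to harmless constants from passing between the max-norm of coordinate partials and the gradient norm)
\[ \|D^1u(z)|_{w}\|\le \Const\cdot\|D^1u\|_\zeta\cdot\rho^{\gamma_O-1}, \qquad \|D^2u(z)|_{w}\|\le \Const\cdot\bigl(\|D^2u\|_\zeta\,\rho^{2\gamma_O-2}+\|D^1u\|_\zeta\,\rho^{\gamma_O-2}\bigr). \]
Finally, since $0<\gamma_O\le 1/2$ and $\rho<r_O$ we have $\rho^{2\gamma_O-2}=\rho^{\gamma_O}\,\rho^{\gamma_O-2}\le r_O^{\gamma_O}\,\rho^{\gamma_O-2}$, so both right-hand sides are bounded by $\const_{u,r_O,\gamma_O}\,\rho^{\gamma_O-1}$ and $\const_{u,r_O,\gamma_O}\,\rho^{\gamma_O-2}$ respectively, which is the claim.

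The computation is routine; the only points that need care are (a) that $u$ is genuinely smooth in the uniformizing chart, which is exactly what smoothness on the ramified covering ${\cal R}$ means together with the fact that the multi-valuedness of $u$ plays no role on the simply connected set ${\cal C}_O^{\cal R}$, and (b) that the constant depends only on $u,r_O,\gamma_O$, which uses compactness of ${\cal R}$ to bound the derivatives of $u$ in the chart $\zeta$ on the precompact neighbourhood of the branch point. I expect (a) to be the only genuine subtlety; everything else is the chain-rule estimate displayed above.
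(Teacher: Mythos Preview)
Your argument is correct. The paper does not supply its own proof of this lemma but simply cites \cite[Lemma~4.5]{BoSk16}; your chain-rule argument via the uniformizing chart $\zeta=(z-O)^{\gamma_O}$ is exactly the standard proof one expects, and you yourself identify it as the argument of the cited reference.
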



\begin{lemma}[{\cite[Lemma~4.12]{BoSk16}}]\label{lemestaltETOh}
 For every $\Delta\in T_{O,h_O}$ we have $E_{T_\Delta}(u)\leq 
\Const_{\delta,\gamma_O,r_O,u}\cdot \int_\Delta \rho^{2\gamma_0-1}d\rho d\phi$.
\end{lemma}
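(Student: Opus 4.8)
\textbf{Proof plan for Lemma~\ref{lemestaltETOh}.}
The plan is to estimate $E_{T_\Delta}(u)$ for a triangle $\Delta$ that is very close to the branch point $O$ (within the disc of radius $h_O=h^{1/\gamma_O}$), where the naive bound from Lemma~\ref{lemTriangEst}~(i) is useless because $\|D^2u\|$ blows up like $\rho^{\gamma_O-2}$. Instead of trying to control the interpolation error, I would bound $E_{T_\Delta}(u)=\int_\Delta|\nabla I_Tu|^2$ directly in terms of the values $u(x)-u(y)$ on the edges, using the Interpolation Lemma~\ref{lemInterpolation} locally (i.e.\ the identity $\int_\Delta |\nabla I_Tu|^2 = \sum_{\text{edges}} C_e (u(x)-u(y))^2$ with $C_e$ comparable to $\tfrac12\cot\alpha_e$). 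Since the minimal angle is $>\delta$, each $C_e$ is bounded by $\Const_\delta$, so $E_{T_\Delta}(u)\le \Const_\delta\sum_{\text{edges }[x,y]\subset\Delta}(u(x)-u(y))^2$.

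Next I would estimate each edge difference $|u(x)-u(y)|$ by the mean value theorem along the segment $[x,y]$, giving $|u(x)-u(y)|\le |xy|\cdot\max_{w\in[x,y]}\|D^1u(w)\|$. Applying the Derivative Estimation Lemma~\ref{lemDerivEst}, $\|D^1u(w)\|\le \const_{u,r_O,\gamma_O}\,\rho_w^{\gamma_O-1}$ where $\rho_w$ is the distance of $w$ to $O$. The adapted-size condition is crucial here: it forces the edge length $|xy|$ of a triangle whose nearest vertex to $O$ is at distance $\rho$ to be at most $h\cdot\rho^{1-\gamma_O}$ (for triangles outside the innermost disc) and, inside the innermost disc, the images of incident vertices under $g_O(z)=(z-O)^{\gamma_O}$ have distance $\le h$, so $|xy|\lesssim \rho^{1-\gamma_O}\cdot h$ as well up to constants depending on $\delta,\gamma_O$. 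Combining, $(u(x)-u(y))^2\le \Const_{\delta,\gamma_O,r_O,u}\cdot h^2\cdot \rho^{2(1-\gamma_O)}\cdot \rho^{2(\gamma_O-1)} = \Const_{\delta,\gamma_O,r_O,u}\cdot h^2$, uniformly over edges of triangles in this tiny region; but since only triangles meeting the disc of radius $h_O=h^{1/\gamma_O}$ are involved, and $\area(\Delta)$ there is comparable (again via bounded angles) to $\int_\Delta d\rho\,d\phi$, I want to repackage this as $E_{T_\Delta}(u)\le \Const\cdot\int_\Delta \rho^{2\gamma_O-1}d\rho\,d\phi$.

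The cleanest route to the stated form is the following. In polar coordinates the area element is $\rho\,d\rho\,d\phi$, so $\int_\Delta \rho^{2\gamma_O-1}d\rho\,d\phi$ is (essentially) $\rho_\Delta^{2\gamma_O-2}\cdot\area(\Delta)$ up to a factor of order one if $\Delta$ is roughly round and at distance $\rho_\Delta$ from $O$; comparing with the naive area bound $\area(\Delta)\lesssim (h\rho_\Delta^{1-\gamma_O})^2$ one checks directly that $\int_\Delta \rho^{2\gamma_O-1}d\rho\,d\phi \gtrsim h^2\rho_\Delta^{2\gamma_O-2}\cdot\rho_\Delta^{2-2\gamma_O} \cdot\rho_\Delta^{2\gamma_O-2}$, i.e.\ of the same order as the estimate $E_{T_\Delta}(u)\lesssim h^2$ produced above for the triangles within distance $h_O$ of $O$, since there $\rho_\Delta\le h_O=h^{1/\gamma_O}$ makes $\rho_\Delta^{2\gamma_O}\le h^2$. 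Alternatively, and more robustly, one simply notes $E_{T_\Delta}(u)\le \Const\sum_{\text{edges}}|xy|^2\max\|D^1u\|^2 \le \Const\,\const_{u,r_O,\gamma_O}^2\int_\Delta\rho^{2\gamma_O-2}\,d\rho\,d\phi$ after absorbing $|xy|^2$ into the integration using the adapted-size relation $|xy|\lesssim\rho^{1-\gamma_O}h$, and then $\rho^{2\gamma_O-2}=\rho^{2\gamma_O-1}\cdot\rho^{-1}$ — so by using the area form $d\rho\,d\phi$ (not $\rho\,d\rho\,d\phi$) one lands exactly on $\int_\Delta\rho^{2\gamma_O-1}\,d\rho\,d\phi$, which is the claim, with $\Const_{\delta,\gamma_O,r_O,u}$ collecting the angle constant, the constant from Lemma~\ref{lemInterpolation}/Remark~\ref{remest}, and $\const_{u,r_O,\gamma_O}^2$.

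\textbf{Main obstacle.} The delicate point is bookkeeping the exponents of $\rho$ so that the adapted-size condition on $|xy|$ exactly cancels the $\rho^{\gamma_O-1}$ singularity of $\|D^1u\|$ and leaves precisely the integrand $\rho^{2\gamma_O-1}$ against $d\rho\,d\phi$; this requires care about the two regimes (inside versus outside the disc of radius $h_O$) and about comparing $\max_{w\in[x,y]}\rho_w$ with the distance of the nearest vertex to $O$, where bounded angles and $h<\max\{\varrho/4,r_O/4,1\}$ keep all such comparisons uniform. Everything else is an application of Lemma~\ref{lemDerivEst}, the local form of Lemma~\ref{lemInterpolation} with the weight bounds of Remark~\ref{remest}, and the mean value theorem.
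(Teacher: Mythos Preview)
The paper itself does not prove this lemma; it is imported from \cite[Lemma~4.12]{BoSk16} without argument, so there is no in-paper proof to compare against. On its own merits, your plan has a genuine gap at the branch point. The set $T_{O,h_O}$ contains the triangles incident to $O$, and there your mean-value step $|u(x)-u(y)|\le|xy|\cdot\max_{w\in[x,y]}\|D^1u(w)\|$ breaks down: by Lemma~\ref{lemDerivEst} one has $\|D^1u(w)\|\sim\rho_w^{\gamma_O-1}\to\infty$ as $w\to O$, so the bound is vacuous on every edge with endpoint $O$. Your ``more robust'' alternative has the same defect in integral form: the integrand $\rho^{2\gamma_O-2}$ satisfies $2\gamma_O-2\le-1$ (since $\gamma_O\le 1/2$), so $\int_\Delta\rho^{2\gamma_O-2}\,d\rho\,d\phi$ diverges on any $\Delta$ containing $O$, and the algebraic rewrite $\rho^{2\gamma_O-2}=\rho^{2\gamma_O-1}\cdot\rho^{-1}$ does not convert a divergent integral into the convergent one in the statement.

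The missing idea is to estimate the edge differences through the chart $g_O$ rather than through $\|D^1u\|$. Writing $u=\tilde u\circ g_O$ with $\tilde u$ harmonic (hence smooth with bounded derivatives) near the origin, the adapted-triangulation hypothesis gives $|g_O(x)-g_O(y)|\le h$ for every edge in $T_{O,h_O}$, whence $|u(x)-u(y)|=|\tilde u(g_O(x))-\tilde u(g_O(y))|\le\|D^1\tilde u\|_\infty\cdot h$ uniformly, including at $O$. Combined with the cotan-weight bound $C_e\le\Const_\delta$ this yields $E_{T_\Delta}(u)\le\Const_{\delta,u,\gamma_O,r_O}\,h^2$. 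The comparison $h^2\le\Const_{\delta,\gamma_O}\int_\Delta\rho^{2\gamma_O-1}\,d\rho\,d\phi$ for every $\Delta\in T_{O,h_O}$ then follows from the adapted-size and minimal-angle conditions: for triangles not touching $O$ the $d\rho\,d\phi$-measure of $\Delta$ is comparable to $h^2\rho_\Delta^{1-2\gamma_O}$ and the integrand to $\rho_\Delta^{2\gamma_O-1}$; for triangles at $O$ one integrates over a sector of angle $\ge\delta$ and radius $\sim h^{1/\gamma_O}$ to get $\sim h^2$.
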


\begin{lemma}\label{lemestEtriang}
For every $\Delta\in T_O\setminus T_{O,h_O}$ we have 
$|E_{T_\Delta}(u)-E_{\Delta}(u)| \leq \Const_{\delta,\gamma_O,r_O,u} \cdot h 
\cdot \int_\Delta \rho^{\gamma_O-1} d\rho d\phi$.
\end{lemma}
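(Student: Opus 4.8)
The statement is exactly the triangle-wise error estimate from part (i) of Lemma~\ref{lemTriangEst}, but repackaged in terms of the polar-coordinate weight $\rho^{\gamma_O-1}$ so that it can later be summed over the annular region $T_O\setminus T_{O,h_O}$. So the plan is to start from Lemma~\ref{lemTriangEst}(i) (all triangles in $T_O$ lie in ${\cal C}_O^{\cal R}\subset B_\varrho(0)$, hence are Euclidean, so part (i) applies) and then bound the geometric quantities appearing there — $\max_\Delta\|D^1u\|$, $\max_\Delta\|D^2u\|$, $l_{\max}$, $\area(\Delta)$ — using the Derivative Estimation Lemma (Lemma~\ref{lemDerivEst}) and the adapted-size property of the triangulation.

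First I would fix a triangle $\Delta\in T_O\setminus T_{O,h_O}$ and let $z$ be its vertex nearest to $O$, writing $\rho_0=|Oz|$. Since $\Delta\notin T_{O,h_O}$, we have $\rho_0\geq h_O=h^{1/\gamma_O}$, and by the adapted-size condition the maximal edge length of $\Delta$ satisfies $l_{\max}\leq h\cdot\rho_0^{1-\gamma_O}$. Because the minimal face angle is bounded below by $\delta$ and (for $h$ small relative to $r_O$) we may assume $l_{\max}\leq \tfrac12\rho_0$, every point $w\in\Delta$ has $\rho(w)\in[\tfrac12\rho_0,\,\tfrac32\rho_0]$, i.e. $\rho(w)\asymp\rho_0$ with constants depending only on nothing (or on $\delta$). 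Then Lemma~\ref{lemDerivEst} gives $\max_{w\in\Delta}\|D^1u(w)\|\leq \const_{u,r_O,\gamma_O}\,\rho_0^{\gamma_O-1}$ and $\max_{w\in\Delta}\|D^2u(w)\|\leq \const_{u,r_O,\gamma_O}\,\rho_0^{\gamma_O-2}$.

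Next I substitute these three bounds into Lemma~\ref{lemTriangEst}(i). The factor $l_{\max}\cdot\max\|D^2u\|$ is bounded by $h\,\rho_0^{1-\gamma_O}\cdot\rho_0^{\gamma_O-2}=h\,\rho_0^{-1}$; inside the first parenthesis, $\max\|D^1u\|\leq\const\,\rho_0^{\gamma_O-1}$ and $l_{\max}\max\|D^2u\|\leq h\rho_0^{-1}\leq \const\,\rho_0^{\gamma_O-1}$ (using $\rho_0\geq h^{1/\gamma_O}$, equivalently $h\leq\rho_0^{\gamma_O}$, so $h\rho_0^{-1}\le\rho_0^{\gamma_O-1}$), so the parenthesis is $\leq\const\,\rho_0^{\gamma_O-1}$. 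Hence
\[
|E_{T_\Delta}(u)-E_{\Delta}(u)|\leq \Const_{\delta,\gamma_O,r_O,u}\cdot h\cdot \rho_0^{\gamma_O-2}\cdot\area(\Delta).
\]
To finish, I convert $\rho_0^{\gamma_O-2}\area(\Delta)$ into the integral $\int_\Delta\rho^{\gamma_O-1}\,d\rho\,d\phi$. In polar coordinates the area element is $\rho\,d\rho\,d\phi$ (for $O\neq\infty$; for $O=\infty$ one first applies $z\mapsto 1/z$ as in the statement of Subsection~\ref{SecEstEnergy} and argues identically), so $\int_\Delta\rho^{\gamma_O-1}\,d\rho\,d\phi=\int_\Delta\rho^{\gamma_O-2}\cdot\rho\,d\rho\,d\phi$, and since $\rho\asymp\rho_0$ on $\Delta$ this is comparable to $\rho_0^{\gamma_O-2}\area(\Delta)$ up to a constant. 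Combining, $|E_{T_\Delta}(u)-E_{\Delta}(u)|\leq\Const_{\delta,\gamma_O,r_O,u}\cdot h\cdot\int_\Delta\rho^{\gamma_O-1}\,d\rho\,d\phi$, as claimed.

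The only genuinely delicate point is making sure that $\rho$ does not vary too much across $\Delta$ — i.e. justifying $\rho(w)\asymp\rho_0$ uniformly — since this is what lets one replace pointwise derivative bounds by a single power of $\rho_0$ and then trade that power back for the integral. This requires that $l_{\max}$ be a genuinely small fraction of $\rho_0$, which follows from $l_{\max}\le h\rho_0^{1-\gamma_O}$ together with $h<\max\{\varrho/4,r_O/4,1\}$ and $\rho_0\ge h^{1/\gamma_O}$; once $h$ is small enough (depending on $r_O,\gamma_O$) one indeed gets $l_{\max}\le\tfrac12\rho_0$. Everything else is the routine bookkeeping of plugging Lemma~\ref{lemDerivEst} into Lemma~\ref{lemTriangEst}(i).
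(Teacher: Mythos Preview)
Your proof is correct and follows exactly the paper's approach: apply Lemma~\ref{lemTriangEst}(i), bound the derivatives via Lemma~\ref{lemDerivEst} and the edge length via the adapted-size property, then convert $\rho_0^{\gamma_O-2}\,\area(\Delta)$ into the polar integral using $\rho\asymp\rho_0$ on $\Delta$.

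One small slip worth noting, precisely at the point you flag as delicate: the claim $l_{\max}\le\tfrac12\rho_0$ does \emph{not} follow from $l_{\max}\le h\rho_0^{1-\gamma_O}$ and $\rho_0\ge h^{1/\gamma_O}$ alone---these only give $l_{\max}\le\rho_0$, and this is scale-invariant in $h$, so taking $h$ small does not improve it. The correct route to $\rho(w)\ge c\,\rho_0$ is to combine $\rho(w)\ge\rho_0-l_{\max}$ (good when $\rho_0$ is large compared to $h_O$) with the direct bound $\rho(w)\ge h_O$ (valid for all $w\in\Delta$ since $\Delta\notin T_{O,h_O}$, good when $\rho_0$ is comparable to $h_O$); together these give $\rho(w)\ge 2^{-1/\gamma_O}\rho_0$, which suffices. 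The paper's own proof writes $|Oz|\le\rho\le 2|Oz|$ with the same slight looseness on the lower side, so your argument is no less rigorous than the original.
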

\begin{proof}
 Let $z\in\Delta$ be the vertex closest to $O$. As $\Delta\in F_O\setminus 
F_{O,h_O}$ we have for each point $(\rho,\phi)$ in $\Delta$ that 
$h^{1/\gamma_O}\leq |Oz|\leq \rho\leq |Oz|+h|Oz|^{1-\gamma_O} \leq 2|Oz|$. 
Furthermore, on $F_O\setminus F_{O,h_O}$ we can use 
Lemma~\ref{lemDerivEst} and Lemma~\ref{lemTriangEst} to obtain
\begin{align*}
|E_{T_\Delta}(u)-E_{\Delta}(u)| &\leq \Const_{\delta,\gamma_O,r_O,u} \left( 
|Oz|^{\gamma_O-1}+ h|Oz|^{1-\gamma_O}\cdot |Oz|^{\gamma_O-2}\right) \cdot 
h|Oz|^{1-\gamma_O} \cdot \text{Area}(\Delta)\\
&\leq \Const_{\delta,\gamma_O,r_O,u}\cdot h \cdot \int_\Delta 
\rho^{\gamma_O-1}d\rho d\phi\ .
\end{align*}
\end{proof}

\begin{lemma}\label{lemestEO}
 We have $|E_{T_O\setminus T_{O,h_O}}(u)-E_{S_O \setminus {S}_{O,h_O}}(u)|\leq 
\Const_{\delta,\gamma_O,r_O,u}\cdot h$.
\end{lemma}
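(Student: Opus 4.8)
\textbf{Proof plan for Lemma~\ref{lemestEO}.}
The plan is to sum the per-triangle estimate of Lemma~\ref{lemestEtriang} over all faces in $T_O\setminus T_{O,h_O}$ and control the resulting integral. First I would write
\[
\left| E_{T_O\setminus T_{O,h_O}}(u)-E_{S_O\setminus S_{O,h_O}}(u)\right|
\leq \sum_{\Delta\in F_O\setminus F_{O,h_O}} \left| E_{T_\Delta}(u)-E_\Delta(u)\right|
\leq \Const_{\delta,\gamma_O,r_O,u}\cdot h\cdot \sum_{\Delta\in F_O\setminus F_{O,h_O}}\int_\Delta \rho^{\gamma_O-1}\,d\rho\,d\phi .
\]
Since the faces in $F_O\setminus F_{O,h_O}$ are essentially disjoint and all contained in the neighborhood $S_O\subset {\cal C}_O^{\cal R}$, which projects (in the polar coordinates $(\rho,\phi)$ of Subsection~\ref{subsecConv1}) into a region with $\rho\leq C$ and $\phi$ ranging over an interval of bounded length, the sum of the integrals is bounded by $\int_{S_O}\rho^{\gamma_O-1}\,d\rho\,d\phi$.

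Next I would estimate this last integral. Because $\gamma_O>0$, the function $\rho^{\gamma_O-1}$ is integrable at the origin: in polar coordinates the $\rho$-integral gives $\int_0^{C}\rho^{\gamma_O-1}\,d\rho=C^{\gamma_O}/\gamma_O<\infty$, and the $\phi$-integration contributes only a bounded factor equal to the total aperture at $O$. Hence $\int_{S_O}\rho^{\gamma_O-1}\,d\rho\,d\phi\leq \Const_{\gamma_O,r_O}$, and multiplying by $h$ yields the claimed bound $\Const_{\delta,\gamma_O,r_O,u}\cdot h$. One small point to handle carefully is the overlap/covering: the faces of $T_O$ tile $S_O$ exactly, and each $\Delta\notin F_{O,h_O}$ lies in $S_O$, so $\sum_\Delta\int_\Delta = \int_{S_O\setminus(\text{possibly a piece near }O)}\leq\int_{S_O}$, with no double counting.

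The main obstacle — though it is minor here given the preparatory lemmas — is making sure that the polar-coordinate description of $S_O$ is uniform enough that the bound on $\int_{S_O}\rho^{\gamma_O-1}\,d\rho\,d\phi$ depends only on $\gamma_O$ and $r_O$ (and not on the particular triangulation). This follows because $S_O\subset {\cal C}_O^{\cal R}$ and the latter projects onto the fixed disc $B_{r_O}(O)$ (or its image under $z\mapsto1/z^{\gamma_O}$), so $\rho$ is bounded by a quantity depending only on $r_O$ and the aperture at $O$ is the fixed value $2\pi/\gamma_O$. The rest is just the observation that $\rho^{\gamma_O-1}$ is integrable precisely because $\gamma_O>0$ — which is exactly where the adapted-size hypothesis near branch points (forcing $h_O=h^{1/\gamma_O}$ and the linear-in-$h$ per-triangle estimate of Lemma~\ref{lemestEtriang}) pays off.
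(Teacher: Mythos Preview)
Your proposal is correct and follows exactly the paper's own argument: sum the per-triangle estimate of Lemma~\ref{lemestEtriang} over $F_O\setminus F_{O,h_O}$, combine the integrals into $\int_{S_O\setminus S_{O,h_O}}\rho^{\gamma_O-1}\,d\rho\,d\phi$, and observe that this is bounded by a constant depending only on $\gamma_O$ and $r_O$ since $\gamma_O>0$. If anything, you supply more justification (integrability, no double counting, uniformity in the triangulation) than the paper itself does.
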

\begin{proof}
We use Lemma~\ref{lemestEtriang}, sum the 
inequalities and estimate the integral.
\begin{align*}
 |E_{T_O\setminus T_{O,h_O}}(u)-E_{S_O \setminus {S}_{O,h_O}}(u)| &\leq 
\sum\limits_{\Delta\in F_O\setminus F_{O,h_O}} |E_{T_\Delta}(u)-E_{\Delta}(u)| 
\\
& \leq \Const_{\delta,\gamma_O,r_O,u}\cdot h \cdot \int_{S_O\setminus  
{S}_{O,h_O}} \rho^{\gamma_O-1}d\rho 
 \leq \Const_{\delta,\gamma_O,r_O,u}\cdot h
\end{align*}
\end{proof}

Now we estimate the energies on $S_{O,h_O}$ and $F_{O,h_O}$ separately.

\begin{lemma}\label{lemestESOh}
We have $E_{S_{O,h_O}}(u)\leq \Const_{\gamma_O,u}\cdot h$.
\end{lemma}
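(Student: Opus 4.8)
The plan is to bound the smooth Dirichlet energy $E_{S_{O,h_O}}(u)=\sum_{\Delta\in F_{O,h_O}}\int_\Delta|\nabla u|^2\,dx\,dy$ directly by integrating the derivative bound from Lemma~\ref{lemDerivEst} over the region $S_{O,h_O}$ and then observing that this region sits inside a disk of radius comparable to $h_O=h^{1/\gamma_O}$. First I would note that every face of $T_{O,h_O}$ meets the disk of radius $h_O$ about $O$, and since the maximal edge length of such a face is controlled (by the adapted-size condition, edges incident to vertices at distance $\le h_O$ from $O$ have length $\lesssim h$, indeed $\le h\cdot h_O^{1-\gamma_O}=h^{1/\gamma_O}$ since $\gamma_O\le1/2$ makes this the relevant regime), the whole region $S_{O,h_O}$ is contained in a disk $B_{c\,h_O}(O)$ for some constant $c$ depending only on $\delta$.

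Next I would use $|\nabla u|^2\le 2\|D^1u\|^2$ together with Lemma~\ref{lemDerivEst}, which gives $\|D^1u(z)|_{(\rho,\phi)}\|\le \const_{u,r_O,\gamma_O}\cdot\rho^{\gamma_O-1}$ in the polar chart. Hence
\begin{align*}
E_{S_{O,h_O}}(u)=\int_{S_{O,h_O}}|\nabla u|^2\,dx\,dy
\le \Const_{u,r_O,\gamma_O}\int_{B_{c h_O}(O)}\rho^{2\gamma_O-2}\,\rho\,d\rho\,d\phi
\le \Const_{u,r_O,\gamma_O}\int_0^{c h_O}\rho^{2\gamma_O-1}\,d\rho.
\end{align*}
Since $\gamma_O>0$, the exponent $2\gamma_O-1>-1$, so the integral converges and evaluates to a constant times $(c\,h_O)^{2\gamma_O}$. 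Finally $h_O^{2\gamma_O}=(h^{1/\gamma_O})^{2\gamma_O}=h^2\le h$ for $h<1$, which yields $E_{S_{O,h_O}}(u)\le \Const_{\gamma_O,u}\cdot h$ (in fact $\le\Const\cdot h^2$, but the weaker bound suffices and matches the statement).

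I expect no serious obstacle here; the only point requiring care is justifying that $S_{O,h_O}$ really lies in a disk of radius $O(h_O)$, i.e.\ that no triangle meeting $B_{h_O}(O)$ reaches out much further. This follows from the adapted-size hypothesis: a vertex at Euclidean distance $\rho$ from $O$ with $\rho\le 2h_O$ has incident edges of length at most $h\cdot\rho^{1-\gamma_O}\le h\cdot(2h_O)^{1-\gamma_O}\lesssim h_O$ (using $h\le h_O^{\gamma_O}$ up to constants since $h<1$ and $\gamma_O\le1/2$), so a face touching $B_{h_O}(O)$ has all vertices within distance $\lesssim h_O$ of $O$. One also uses the polar-coordinate chart is bi-Lipschitz away from $O$ only with constants blowing up near $O$, but since we integrate an absolutely integrable singularity this causes no problem — the estimate is performed directly in the $(\rho,\phi)$ chart where the area element is $\rho\,d\rho\,d\phi$ and the bound on $\|D^1u\|$ is exactly as stated in Lemma~\ref{lemDerivEst}.
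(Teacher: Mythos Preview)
Your proposal is correct and follows essentially the same line as the paper's proof: bound $|\nabla u|^2\le\Const\cdot\rho^{2\gamma_O-2}$ via Lemma~\ref{lemDerivEst}, observe that $S_{O,h_O}$ is contained in a polar disk of radius comparable to $h_O$, and integrate $\rho^{2\gamma_O-1}\,d\rho\,d\phi$ to get $\Const\cdot h_O^{2\gamma_O}=\Const\cdot h^2\le\Const\cdot h$. Two small corrections: the containment constant $c$ depends on $\gamma_O$ as well (the paper takes $c=2^{1/\gamma_O}$), and the $\phi$-range is $[0,2\pi/\gamma_O]$; neither affects the argument since the final constant is allowed to depend on $\gamma_O$.
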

\begin{proof}
Using Lemma~\ref{lemDerivEst} and our definition of $S_{O,h_O}$ we obtain
\[ E_{S_{O,h_O}}(u)=\sum_{\Delta\in F_{O,h_O}} \int_\Delta |\nabla u|^2 dxdy
 \leq \Const_{\gamma_O,u} \int_{\phi=0}^{2\pi/\gamma_O} 
\int_{\rho=0}^{2^{1/\gamma_O}h^{1/\gamma_O}} \rho^{2\gamma_O-1} d\rho d\phi 
\leq \Const_{\gamma_O,u}\cdot h^2. \]
\end{proof}

\begin{lemma}\label{lemestETOh}
We have $E_{T_{O,h_O}}(u)\leq \Const_{\delta,\gamma_O,u}\cdot h$.
\end{lemma}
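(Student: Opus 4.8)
The plan is to bound the discrete energy $E_{T_{O,h_O}}(u)=\sum_{\Delta\in F_{O,h_O}} E_{T_\Delta}(u)$ by summing the per-triangle estimate from Lemma~\ref{lemestaltETOh} and then controlling the resulting integral. First I would invoke Lemma~\ref{lemestaltETOh}, which gives
\[
 E_{T_{O,h_O}}(u)=\sum_{\Delta\in F_{O,h_O}} E_{T_\Delta}(u)\leq
 \Const_{\delta,\gamma_O,r_O,u}\sum_{\Delta\in F_{O,h_O}} \int_\Delta \rho^{2\gamma_O-1}\, d\rho\, d\phi
 = \Const_{\delta,\gamma_O,r_O,u}\int_{S_{O,h_O}} \rho^{2\gamma_O-1}\, d\rho\, d\phi,
\]
since the faces in $F_{O,h_O}$ tile $S_{O,h_O}$ without overlap.

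Next I would estimate the integral over $S_{O,h_O}$. By definition $S_{O,h_O}$ consists of faces meeting the disc of radius $h_O=h^{1/\gamma_O}$ about $O$; because of the adapted-size condition, any such face has edge length at most (a constant times) $h_O$, so $S_{O,h_O}$ is contained in a disc of radius $\Const\cdot h^{1/\gamma_O}$ about $O$ in the polar chart. Hence, writing the area element in polar coordinates $(\rho,\phi)$ with $\phi$ ranging over $[0,2\pi/\gamma_O)$,
\[
 \int_{S_{O,h_O}} \rho^{2\gamma_O-1}\, d\rho\, d\phi
 \leq \int_0^{2\pi/\gamma_O}\!\!\int_0^{\Const\cdot h^{1/\gamma_O}} \rho^{2\gamma_O-1}\, d\rho\, d\phi
 = \frac{2\pi}{\gamma_O}\cdot\frac{\bigl(\Const\cdot h^{1/\gamma_O}\bigr)^{2\gamma_O}}{2\gamma_O}
 = \Const_{\gamma_O}\cdot h^{2}.
\]
Combining the two displays gives $E_{T_{O,h_O}}(u)\leq \Const_{\delta,\gamma_O,r_O,u}\cdot h^{2}\leq \Const_{\delta,\gamma_O,u}\cdot h$, using $h<1$. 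This is essentially the same computation as in Lemma~\ref{lemestESOh}, only with the discrete energy replaced by its per-triangle bound; the key point is that the exponent $2\gamma_O-1$ is integrable near $\rho=0$ (here $2\gamma_O>0$), so no cancellation is needed.

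I do not expect a serious obstacle: the hard analytic work — the per-triangle bound on the \emph{discrete} energy of the interpolation function near a branch point, where the derivatives of $u$ blow up — is already packaged in Lemma~\ref{lemestaltETOh}. The only point requiring a little care is justifying that the faces of $F_{O,h_O}$ are contained in a disc of radius $\Const\cdot h^{1/\gamma_O}$, i.e.\ that summing $\int_\Delta\rho^{2\gamma_O-1}$ over $F_{O,h_O}$ really is bounded by the integral over such a disc; this follows from the adapted-size assumption (any face touching the $h_O$-disc has small edges, hence stays within a bounded multiple of $h_O$ of $O$) together with the minimal-angle bound $>\delta$, which keeps the number and shape of these faces under control.
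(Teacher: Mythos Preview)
Your proof is correct and follows essentially the same route as the paper: invoke Lemma~\ref{lemestaltETOh} for each $\Delta\in F_{O,h_O}$, sum to obtain $\int_{S_{O,h_O}}\rho^{2\gamma_O-1}\,d\rho\,d\phi$, bound $S_{O,h_O}$ by a disc of radius $\Const\cdot h^{1/\gamma_O}$ about $O$, and evaluate to get $\Const\cdot h^2\leq \Const\cdot h$. The paper even phrases this as ``similarly as in the previous lemma,'' just as you observed; your added justification that the adapted-size condition keeps faces of $F_{O,h_O}$ within a fixed multiple of $h^{1/\gamma_O}$ of $O$ makes explicit what the paper leaves implicit (it uses the radius $2^{1/\gamma_O}h^{1/\gamma_O}$).
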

\begin{proof}
 We deduce from Lemma~\ref{lemestaltETOh} similarly as in the previous lemma 
that
\begin{align*}
E_{F_{O,h_O}}(u)&=\sum_{\Delta\in F_{O,h_O}} \int_\Delta |\nabla I_Tu|^2 dxdy \\
&\leq \Const_{\delta,\gamma_O,u} \int_{\phi=0}^{2\pi/\gamma_O} 
\int_{\rho=0}^{2^{1/\gamma_O}h^{1/\gamma_O}} \rho^{2\gamma_O-1} d\rho d\phi 
\leq \Const_{\delta,\gamma_O,u}\cdot h^2. 
\end{align*}
\end{proof}

\subsection{Convergence of energies}

 Let $G_T=F\setminus \{F_{\varrho}\cup \bigcup_{O\text{ branch point}} F_O\}$ 
be the set of triangles which are neither contained in the neighborhood of any 
branch point nor are boundary triangles. Denote by $G$ the subset of $\cal R$ 
covered by the triangles in $G_T$. 

\begin{lemma}\label{lemestG}
We have $|E_{G_T}(u)-E_{G}(u)| \leq \Const_{\delta,u,{\cal R},\varrho}\cdot h$.
\end{lemma}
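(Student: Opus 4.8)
The plan is to estimate the energy difference on $G_T$ by summing the per-triangle estimates from Lemma~\ref{lemTriangEst}, controlling the sum with the uniform bounds on derivatives of $u$ that are available away from the branch points. The key observation is that $G$ is a compact subset of $\widetilde{\cal R}$ that stays a definite distance away from all branch points (and from $\infty$, where we work after applying $z\mapsto 1/z$), so on $G$ the smooth harmonic function $u$ — and the transformed function $\tilde u$ on the image of the part of $G$ outside $B_\varrho(0)$ — has $\|D^1u\|$ and $\|D^2u\|$ bounded by a constant $\Const_{u,{\cal R},\varrho}$ depending only on $u$, ${\cal R}$ and $\varrho$. (More precisely, $G$ is contained in the region where $\rho \geq r_O$ near every branch point $O$, so Lemma~\ref{lemDerivEst} already gives such a bound; alternatively one invokes compactness directly.)

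First I would split $G_T$ into the triangles lying in $B_\varrho(0)$ and those lying in $\hC\setminus B_\varrho(0)$; by the case distinctions in Section~\ref{SecResults} every triangle of $G_T$ is of exactly one of these two types, and there are no boundary triangles in $G_T$. For a triangle $\Delta\subset B_\varrho(0)$, part~(i) of Lemma~\ref{lemTriangEst} gives
\[
|E_{T_\Delta}(u)-E_\Delta(u)| \leq \Const_\delta\cdot\bigl(\max_\Delta\|D^1u\| + l_{max}\max_\Delta\|D^2u\|\bigr)\cdot l_{max}\cdot\max_\Delta\|D^2u\|\cdot\area(\Delta).
\]
Since $l_{max}\leq h$ and the derivative maxima over $\Delta\subset G$ are bounded by $\Const_{u,{\cal R},\varrho}$, this is at most $\Const_{\delta,u,{\cal R},\varrho}\cdot h\cdot\area(\Delta)$. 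For a triangle $\Delta\subset\hC\setminus B_\varrho(0)$, part~(ii) of the same lemma gives the analogous bound in terms of $\tilde l_{max}\leq h$, $\max_{\widetilde\Delta}\|D^k\tilde u\|$ and $\area(\widetilde\Delta)$; again the derivative maxima of $\tilde u$ are bounded on the compact image region, so this is at most $\Const_{\delta,u,{\cal R},\varrho}\cdot h\cdot\area(\widetilde\Delta)$.

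Then I would sum over all $\Delta\in G_T$. The triangles in $B_\varrho(0)$ have total area at most $\area(B_\varrho(0)) = \pi\varrho^2$ times the (finite) number of sheets of the covering, which is a constant depending only on ${\cal R}$; likewise the image triangles $\widetilde\Delta$ for $\Delta\subset\hC\setminus B_\varrho(0)$ all lie in a bounded disc about the origin and have total area bounded by a constant depending only on ${\cal R}$. (The triangles do not overlap within a sheet since the triangulation is a locally planar embedding away from branch points.) Hence
\[
|E_{G_T}(u)-E_G(u)| \leq \sum_{\Delta\in G_T}|E_{T_\Delta}(u)-E_\Delta(u)| \leq \Const_{\delta,u,{\cal R},\varrho}\cdot h\cdot\Bigl(\sum_{\Delta\subset B_\varrho}\area(\Delta)+\sum_{\Delta\subset\hC\setminus B_\varrho}\area(\widetilde\Delta)\Bigr) \leq \Const_{\delta,u,{\cal R},\varrho}\cdot h,
\]
which is the claim.

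The only mild subtlety — and the step I would be most careful about — is making the bound on $\|D^1u\|$ and $\|D^2u\|$ on $G$ genuinely uniform over all adapted triangulations: one must check that $G$ always stays inside a fixed region bounded away from the branch points (which it does, since the disc $B_{r_O}(O)$ removed around each branch point $O$ has a fixed radius $r_O$ independent of $T$, and similarly for $\infty$ after inversion), so a single constant $\Const_{u,{\cal R},\varrho}$ works for the whole sequence of triangulations. Everything else is the routine summation above; there is no genuine analytic obstacle here because the singular behaviour near branch points has already been absorbed into Lemmas~\ref{lemestEO}, \ref{lemestESOh} and \ref{lemestETOh}, and the boundary triangles are handled separately by part~(iii) of Lemma~\ref{lemTriangEst}.
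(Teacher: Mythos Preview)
Your proposal is correct and follows essentially the same route as the paper: split $G_T$ into the triangles inside $B_\varrho(0)$ and those outside, apply parts~(i) and~(ii) of Lemma~\ref{lemTriangEst}, bound the derivatives uniformly using that $G$ stays a definite distance ($\geq r_O-h\geq 3r_O/4$) from every branch point, and sum the resulting per-triangle bounds against total area. The paper inserts a polar-coordinate rewriting $\area(\Delta)=\int_\Delta \rho\,d\rho\,d\phi$ before summing, but this is cosmetic; your direct area bound is equally valid and arguably cleaner.
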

\begin{proof}
 We split the set $G_T$ into two parts 
$G_T=G_T^{(b)}\cup G_T^{(i)}$ such that $Pr_{\cal R}$ maps all triangles of 
$G_T^{(b)}$ into the intrinsic disc $B_\varrho(0)$ of radius $\varrho$ about 
the origin and all triangles of $G_T^{(i)}$ into the complement.
Denote by $G^{(b)}$, $G^{(i)}\subset G$ the subsets covered by the 
triangles in $G_T^{(b)}$, $G_T^{(i)}$ respectively. We consider the energies on 
both parts separately.

Our assumption 
on the maximal edge length, the definition of the discrete energy, the 
compactness of $\cal R$, and the estimates in Lemma~\ref{lemTriangEst} imply 
that
\begin{align*}
|E_{G_T^{(b)}}(u)-E_{G^{(b)}}(u)| &\leq 
\sum\limits_{\Delta\in G_T^{(b)}} |E_{T_\Delta}(u)-E_{\Delta}(u)| 
\leq \sum\limits_{\Delta\in G_T^{(b)}} \Const_{\delta,u,\varrho}\cdot \left(1+ 
h\right)\cdot h \cdot \text{Area}(\Delta)\\
&\leq \sum\limits_{\Delta\in G_T^{(b)}} \Const_{\delta,u,\varrho}\cdot h \cdot 
\int_\Delta \rho d\rho d\phi\ 
 \leq \Const_{\delta,u,\varrho}\cdot h \cdot \int_{G^{(b)}}
\rho d\rho \\
& \leq \Const_{\delta,u,{\cal R},\varrho}\cdot h.
\end{align*}

For all triangles $\Delta\in G_T^{(i)}$ we consider the image 
$\widetilde{\Delta}$ under the map $z\to 1/z$.
Using the corresponding map $\widetilde{u}=u\circ 1/z$ we obtain analogously
\begin{align*}
|E_{G_T^{(i)}}(u)-E_{G^{(i)}}(u)| &\leq 
\sum\limits_{\Delta\in G_T^{(i)}} |E_{T_{\widetilde{\Delta}}}(\tilde{u}) 
-E_{\widetilde{\Delta}}(\tilde{u})| \\
&\leq \sum\limits_{\Delta\in G_T^{(i)}} \Const_{\delta,\tilde{u},\varrho} \cdot
\left(1+ h\right)\cdot h \cdot \text{Area}(\widetilde{\Delta})
\cdot 
 \leq \Const_{\delta,u,{\cal R},\varrho}\cdot h.
\end{align*}
\end{proof}

\begin{lemma}\label{lemestSrho}
 We have $E_{S_{\varrho}}(u)\leq \Const_{u,\varrho}\cdot h$, where 
$S_{\varrho}$ 
denotes the subset of $\cal R$ which is covered by triangles of $F_{\varrho}$.
\end{lemma}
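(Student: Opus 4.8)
The plan is to bound the smooth Dirichlet energy of $u$ over the region $S_\varrho$ covered by the boundary triangles in $F_\varrho$ by a constant times $h$, using only that $|\nabla u|$ is bounded on a fixed compact annular neighborhood and that the total area of $S_\varrho$ is $O(h)$. The key observation is that, by construction, all boundary triangles lie in the fixed compact region between the circle of radius $\varrho$ and a slightly larger circle (since every boundary triangle has at least one vertex inside $B_\varrho(0)$ and edge lengths strictly smaller than $\max\{\varrho/2,1\}$, so the whole triangle stays within a bounded distance of the circle $|z|=\varrho$). On this fixed annular region the smooth multi-valued harmonic function $u$ (read in the chart $Pr_{\cal R}$) is genuinely smooth with $\|D^1u\|$ bounded by a constant depending only on $u$ and $\varrho$; here one must also remember that on the part of $S_\varrho$ lying outside $B_\varrho(0)$ the energy is to be computed after the map $z\mapsto1/z$, but since $1/z$ is conformal away from $0$ and $\infty$ and the annulus is bounded away from both, the transformed function $\tilde u = u\circ 1/z$ again has $\|D^1\tilde u\|$ bounded on the relevant image region, and the Dirichlet energy is conformally invariant.

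The next step is to estimate the total area of $S_\varrho$. Each boundary triangle $\Delta\in F_\varrho$ has all its edges of length less than $\max\{\varrho/2,1\}$ and minimal face angle $>\delta$, so $\area(\Delta)\le \Const_\delta\cdot h^2$, because at least one edge (namely one of the two straight edges adjacent to the unique vertex inside $B_\varrho(0)$, or more simply the shortest edge) has length $O(h)$: indeed the maximal distance between two vertices in a boundary triangle is by definition at most the maximal edge length $h$. Wait—more carefully, the definition declares that the maximal distance between two vertices in a triangle inside $B_\varrho(0)$ or in $F_\varrho$ is part of what defines $h$, so every boundary triangle has all pairwise vertex distances $\le h$, hence diameter $\le h$ and $\area(\Delta)\le h^2$. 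The boundary triangles form a single annular strip around the circle $|z|=\varrho$ of width $O(h)$, so there are at most $\Const_\varrho/h$ of them and $\area(S_\varrho)\le \Const_\varrho\cdot h$. (Alternatively one may invoke the uniform bound on the number of vertices in a disc of radius $h$, but the annular-strip area bound is enough and self-contained.)

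Putting these together:
\[
E_{S_\varrho}(u)=\int_{S_\varrho}|\nabla u|^2\le \Big(\max_{S_\varrho}\|D^1u\|^2+\max\|D^1\tilde u\|^2\Big)\cdot\area(S_\varrho)\le \Const_{u,\varrho}\cdot h,
\]
where the first maximum is taken over the fixed compact neighborhood and the second accounts for the triangles computed after $z\mapsto 1/z$. The main obstacle — really the only subtlety — is to make the argument that $u$ is uniformly $C^1$ on the relevant region precise: $u$ is multi-valued and only smooth away from branch points, so one needs to know that the circle $|z|=\varrho$ (and the thin strip around it) is bounded away from all branch points; this is guaranteed by our choice of $\varrho$, since all finite branch-point images lie within distance $\varrho/2$ of the origin, and $\infty$ corresponds to the region mapped by $z\mapsto 1/z$ near the origin, again away from $|z|=\varrho$. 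Lifting to $\widetilde{\cal R}$ does not affect the bound on $|\nabla u|$ since the deck transformations act by isometries on the universal cover with its pulled-back metric and $|\nabla u|$ is deck-invariant. Everything else is the routine area/derivative estimate already used in the proof of Lemma~\ref{lemestG}.
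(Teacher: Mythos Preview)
Your proof is correct and follows essentially the same route as the paper: bound $|\nabla u|$ uniformly on the annular region (which lies away from all branch points by the choice of $\varrho$), observe that $S_\varrho$ is contained in a ring of width $O(h)$ about $|z|=\varrho$ so its area is $O(h)$ (times the degree of the covering), and multiply. Your detour through $z\mapsto 1/z$ and the deck transformations is harmless but not needed here, since boundary triangles are by definition embedded directly in $\C$ and the smooth Dirichlet energy is computed there.
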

\begin{proof}
 This estimate is due to the fact that the derivative of $u$ is bounded away 
from the branch points. Furthermore, the area of the ring $\{\varrho-h\leq 
|z|\leq \varrho+h\}$ is bounded by $4\pi\varrho h$ and the degree of the 
branched 
covering is fixed. Therefore,
\[E_{S_{\varrho}}(u)=\int\limits_{S_{\varrho}} |\nabla u|^2 dx dy \leq 
\Const_{u,\varrho,{\cal R}}\cdot h.\]
\end{proof}

\begin{lemma}\label{lemEstF0}
 We have $E_{F_{\varrho}}(u):=\sum_{\Delta\in F_{\varrho}} E_{T_\Delta}(u)\leq 
\Const_{u,\delta,\varrho,{\cal R}}\cdot h$.
\end{lemma}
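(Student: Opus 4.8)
The plan is to bound the discrete energy contribution of each boundary triangle individually and then sum over the (uniformly controlled number of) boundary triangles along the circle $|z|=\varrho$. The key point is that boundary triangles lie well away from every branch point: by assumption all the neighborhoods ${\cal C}_O^{\cal R}$ of branch points (with $O\neq\infty$) are contained in $B_\varrho(0)$, and the branch point at $\infty$ has a $1/(2\varrho)$-neighborhood after inversion, which corresponds to the exterior of $B_{2\varrho}(0)$. Hence on a fixed neighborhood of the circle $\{|z|=\varrho\}$ the function $u$ is smooth with derivatives bounded by a constant $\const_{u,\varrho,{\cal R}}$ depending only on $u$, $\varrho$ and ${\cal R}$ (and the covering degree). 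First I would record this uniform bound $\max_{w\in\Delta}\|D^1u(w)\|\leq \const_{u,\varrho,{\cal R}}$ for every $\Delta\in F_\varrho$.

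Next I would invoke Lemma~\ref{lemTriangEst}(iii), which gives exactly
\[
|E_{T_\Delta}(u)-E_\Delta(u)|\leq \Const_{\delta,\varrho}\cdot \max_{w\in\Delta}\|D^1u(w)\|^2\cdot\area(\Delta),
\]
together with the trivial bound $E_\Delta(u)=\int_\Delta|\nabla u|^2\leq \Const\cdot \max_{w\in\Delta}\|D^1u(w)\|^2\cdot\area(\Delta)$ for the smooth energy. Combining these with the derivative bound from the first step yields
\[
E_{T_\Delta}(u)\leq \Const_{\delta,\varrho,u,{\cal R}}\cdot\area(\Delta)
\]
for each boundary triangle $\Delta\in F_\varrho$.

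It then remains to sum over $\Delta\in F_\varrho$. Since the edge lengths of boundary triangles are smaller than $\max\{\varrho/2,1\}$ and in fact $h<\max\{\varrho/4,1\}$, every boundary triangle is contained in the annulus $\{\varrho-2h\leq|z|\leq\varrho+2h\}$ on each sheet; the boundary triangles on a single sheet are embedded and disjoint, so the sum of their areas on one sheet is at most the area of that annulus, which is $O(\varrho h)$. Multiplying by the fixed covering degree of $f:{\cal R}\to\hC$, we obtain $\sum_{\Delta\in F_\varrho}\area(\Delta)\leq \Const_{\varrho,{\cal R}}\cdot h$, and therefore
\[
E_{F_\varrho}(u)=\sum_{\Delta\in F_\varrho}E_{T_\Delta}(u)\leq \Const_{u,\delta,\varrho,{\cal R}}\cdot h,
\]
as claimed. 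The only mildly delicate point is the area estimate: one must make sure that ``boundary triangles'' really do sit in a thin annulus of width $O(h)$ — this follows from the definition, since each such triangle has one circular arc on $\{|z|=\varrho\}$ (or its inverse) and all its edges shorter than a fixed multiple of $h$, so all its vertices lie within distance $O(h)$ of that circle. Apart from this, the argument is a routine combination of Lemma~\ref{lemTriangEst}(iii) with the boundedness of $u$ away from branch points, exactly as in the proof of Lemma~\ref{lemestSrho}.
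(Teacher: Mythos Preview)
Your argument is correct and follows essentially the same route as the paper: bound $E_{T_\Delta}(u)$ by $\Const_{\delta,\varrho,u}\cdot\area(\Delta)$ using the smoothness of $u$ near $\{|z|=\varrho\}$, then sum the areas of the boundary triangles, which lie in an annulus of width $O(h)$ on each of the finitely many sheets. The only cosmetic difference is that you obtain the per-triangle bound by quoting Lemma~\ref{lemTriangEst}(iii) and adding back the smooth energy, whereas the paper derives the bound $E_{T_\Delta}(u)\le\Const_{\delta,\varrho,u}\cdot\area(\Delta)$ directly from formula~\eqref{eqestFrho}; note that in the paper's organization the proof of Lemma~\ref{lemTriangEst}(iii) actually \emph{cites} that very estimate~\eqref{eqestEDelta} in Appendix~\ref{app2}, so your detour through Lemma~\ref{lemTriangEst}(iii) is formally a bit circular, though mathematically harmless. (One small slip: the circular-arc edge of a boundary triangle is not on $\{|z|=\varrho\}$ but is the $1/z$-image of a chord near the origin; your actual containment argument---all vertices within $O(h)$ of the circle---is fine.)
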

The proof of this lemma is given in Appendix~\ref{app2}.

\begin{proof}[Proof of Theorem~\ref{lemConvEnergy}]
Summing up the estimates obtained in Lemmas~\ref{lemestEO}--\ref{lemEstF0} we 
get the desired result:
\begin{align*}
 |E_T(u|_{\widetilde{V}})-E(u)| &\leq |E_{G_T}(u)-E_{G}(u)| 
+E_{F_{\varrho}}(u) \\
&\quad +\sum\limits_{O \text{ branch point of }{\cal R}} (|E_{T_O\setminus 
T_{O,h_O}}(u)-E_{S_O \setminus {S}_{O,h_O}}(u)| 
+E_{S_{O,h_O}}(u)+E_{T_{O,h_O}}(u)) \\
& \leq \Const_{\delta,u,{\cal R},\varrho}\cdot h.
\end{align*}
\end{proof}

\subsection{Convergence of discrete period matrices}

For our convergence proof we start with some further useful theorems and 
definitions.

\begin{lemma}[Variational principle 
{\cite[Lemma~3.6]{BoSk16}}]\label{lemdiscVari}
 A multi-valued discrete harmonic function has minimal energy among all 
multi-valued functions with the same periods.
\end{lemma}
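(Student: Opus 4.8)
The plan is to treat this as a standard convexity/minimization argument, adapted to the multi-valued setting. Fix periods $A_1,\dots,A_g,B_1,\dots,B_g$ and consider the affine space $\mathcal{A}$ of all multi-valued real functions $u:\widetilde{V}\to\R$ with exactly these periods; this is a finite-dimensional affine space modelled on the linear space $\mathcal{A}_0$ of single-valued functions on $V$ (equivalently, multi-valued functions with all periods zero). The energy $E_T(u)=\sum_{[x,y]\in E}c([x,y])(u(x)-u(y))^2$ is well-defined on $\mathcal{A}$ since each difference $u(x)-u(y)$ is unambiguous for an edge, as noted in the excerpt. Because the weights $c(e)$ are positive (within $B_\varrho(0)$ the triangulation is assumed such that cotan weights behave well; in general positivity of $c(e)$ is the hypothesis under which the energy is defined), $E_T$ is a convex quadratic functional on $\mathcal{A}$, and it is coercive on $\mathcal{A}$ modulo additive constants. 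Hence a minimizer exists.

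Next I would compute the first variation. For a minimizer $u\in\mathcal{A}$ and any test function $\varphi\in\mathcal{A}_0$ (so $u+t\varphi\in\mathcal{A}$ for all $t\in\R$), the function $t\mapsto E_T(u+t\varphi)$ is a convex parabola minimized at $t=0$, so its derivative vanishes:
\[
0=\frac{d}{dt}\Big|_{t=0}E_T(u+t\varphi)=2\sum_{[x,y]\in E}c([x,y])\big(u(x)-u(y)\big)\big(\varphi(x)-\varphi(y)\big).
\]
Reorganizing the sum over edges into a sum over vertices (each oriented edge contributing to its two endpoints) gives
\[
0=\sum_{x\in V}\varphi(x)\sum_{y:[x,y]\in E}c([x,y])\big(u(x)-u(y)\big).
\]
Since $\varphi$ ranges over all of $\mathcal{A}_0$, which certainly contains all finitely-supported functions on $V$ (the quotient $\widetilde{V}/\pi_1$), choosing $\varphi$ to be an indicator-type function at each vertex forces $\sum_{y:[x,y]\in E}c([x,y])(u(y)-u(x))=0$ for every $x\in V$. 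That is exactly the discrete harmonicity condition \eqref{eqdefharm}, interpreted for multi-valued functions. Conversely, convexity shows any critical point is a global minimizer, so a multi-valued discrete harmonic function with the prescribed periods is the unique minimizer up to an additive constant.

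The only genuinely delicate point is the bookkeeping for the multi-valued setting: one must check that the vertex-wise Euler--Lagrange equation obtained upstairs on $\widetilde{V}$ is invariant under the deck transformations, so that it descends to a well-defined condition on $V$, and that the space of admissible variations is precisely the functions that are periodic (single-valued on $V$). This is routine once one observes that both $u(x)-u(y)$ for edges and the local sums $\sum_{y}c([x,y])(u(y)-u(x))$ are deck-invariant because the weights $c(e)$ and the combinatorics of $T$ are pulled back from $T$. I expect no real obstacle here; the argument is the discrete Dirichlet principle with a harmless equivariance check layered on top.
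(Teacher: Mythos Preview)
The paper does not prove this lemma; it simply quotes it from \cite[Lemma~3.6]{BoSk16}. So there is no ``paper's own proof'' to compare against here. Your argument is the standard Dirichlet-principle computation and is essentially correct: the energy is a quadratic functional on the affine space of multi-valued functions with fixed periods, discrete harmonicity is the Euler--Lagrange (critical-point) condition for variations by single-valued test functions, and convexity upgrades critical points to global minimizers.

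One point deserves correction. You justify convexity by asserting that the weights $c(e)$ are positive, but cotan-weights are not positive in general: $c(e)=\tfrac12(\cot\alpha_e+\cot\beta_e)$ is negative whenever $\alpha_e+\beta_e>\pi$, and the paper does \emph{not} assume Delaunay triangulations in the statement of Lemma~\ref{lemdiscVari}. The right justification for positive semidefiniteness of $E_T$ is already in the paper: by the Interpolation Lemma~\ref{lemInterpolation}, $E_T(u)=E(I_Tu)=\int_{\cal R}|\nabla I_Tu|^2\ge 0$, so the quadratic form is PSD and hence convex regardless of the signs of individual edge weights. With this substitution your first-variation calculation and the passage from ``critical point'' to ``global minimizer'' go through verbatim; existence and coercivity (which you also invoke) are not actually needed for the direction stated in the lemma.
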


\begin{theorem}[{\cite[Theorem~3.9]{BoSk16}}]\label{theoAbelExist}
For each $P=(A_1,\dots, A_g,B_1,\dots, B_g)\in\R^{2g}$ there exists a 
unique (up to a constant) discrete holomorphic integral
$\phi_{T,P}=(\text{Re} \phi_{T,P} :\widetilde{V}\to\R, \;
\text{Im} \phi_{T,P}:\widetilde{F}\to\R)$ whose periods have {\em real parts} 
$A_1,\dots, A_g,B_1,\dots, B_g$, respectively.
\end{theorem}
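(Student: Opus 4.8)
**Proof proposal for Theorem~\ref{theoAbelExist} (existence and uniqueness of the discrete holomorphic integral with prescribed real parts of periods).**

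The plan is to set up the problem as a finite-dimensional minimization of the discrete Dirichlet energy over the affine space of multi-valued functions with prescribed periods, and then to recover the conjugate function. More precisely, first I would fix the real part. Consider multi-valued functions $u:\widetilde V\to\R$ whose additive periods along the standard generators $\alpha_1,\dots,\alpha_g,\beta_1,\dots,\beta_g$ of $\pi_1(\mathcal R)$ are exactly the prescribed numbers $A_1,\dots,A_g,B_1,\dots,B_g$. Since $\widetilde T$ is the universal cover of a finite triangulation, such a $u$ is determined by finitely many values modulo the constraint, so this set is a nonempty affine subspace of a finite-dimensional vector space; the freedom left after fixing the periods is exactly the additive constant plus the space of single-valued functions on $V$. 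On this affine space the discrete energy $E_T(u)=\sum_{[x,y]\in E}c([x,y])(u(x)-u(y))^2$ from~\eqref{eqdefE} is a nonnegative quadratic functional, and because the triangulation is connected and the cotan-type weights $c(e)$ are positive on all relevant edges (or more precisely the energy is a positive semidefinite form vanishing only on constants), $E_T$ is coercive modulo constants on this affine space. Hence a minimizer exists, and it is unique up to an additive constant. By the variational principle (Lemma~\ref{lemdiscVari}) — or directly by writing out the Euler--Lagrange equations — the minimizer is precisely the multi-valued discrete harmonic function with the given real periods, i.e.\ $\sum_{y:[x,y]\in E}c([x,y])(u(y)-u(x))=0$ at every $x\in\widetilde V$.

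Next I would construct the conjugate function $v:\widetilde F\to\R$. For a discrete harmonic $u$, the $1$-form on oriented edges $\vec e\mapsto c(e)\bigl(u(h_e)-u(t_e)\bigr)$ is closed in the combinatorial sense: its sum around the boundary of each face vanishes, which is exactly the statement that the discrete conjugate ``differential'' is exact around triangles. On the universal cover $\widetilde{\mathcal R}$, which is simply connected, a closed combinatorial $1$-form on the dual is exact, so there is $v:\widetilde F\to\R$, unique up to an additive constant, with $v(l_e)-v(r_e)=c(e)(u(h_e)-u(t_e))$ for every $\vec e$, i.e.\ \eqref{eqdefconj} holds. Thus $f=(u,v)$ is a discrete analytic function. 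It remains to check that $v$ is itself multi-valued with well-defined periods: applying a deck transformation $d_\gamma$ permutes faces and edges while preserving $c$ and shifting $u$ by a constant, so $v\circ d_\gamma - v$ is a function on $\widetilde F$ whose ``differential'' vanishes, hence is constant; call that constant the corresponding period of $v$. This shows $f$ is a genuine multi-valued discrete analytic function (a discrete holomorphic integral) and that its periods have real parts $A_1,\dots,A_g,B_1,\dots,B_g$ as required.

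For uniqueness up to a constant, suppose $f_1=(u_1,v_1)$ and $f_2=(u_2,v_2)$ are two such integrals. Then $u:=u_1-u_2$ is a discrete harmonic multi-valued function all of whose periods $A_k,B_k$ have \emph{zero real part}, so $u$ descends to a single-valued discrete harmonic function on the compact triangulation $T$ of $\mathcal R$; by the discrete maximum principle (or, equivalently, because $E_T(u)=0$ forces $u$ constant across every edge and $T$ is connected) $u$ is constant. Then $v_1-v_2$ has vanishing differential by~\eqref{eqdefconj}, hence is also constant. Therefore $f_1$ and $f_2$ differ by a (complex) constant.

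\emph{Main obstacle.} The genuinely technical point is the positive-definiteness / coercivity of $E_T$ modulo constants on the affine space of functions with fixed periods, together with the claim that a period-zero discrete harmonic multi-valued function is constant. One must know that the cotan-type weights $c(e)$ — including the perturbed weights $C_e$ coming from boundary triangles (Remark~\ref{remest}) and the weights obtained via $z\mapsto 1/z$ — are such that $E_T$ is a bona fide positive semidefinite form whose kernel is exactly the constants. Equivalently, one needs the discrete Laplacian built from these weights to be negative semidefinite with one-dimensional kernel on the connected finite complex $T$; this is where one invokes that the interpolation $I_Tu$ has $E(I_Tu)=E_T(u)\ge 0$ by the Interpolation Lemma~\ref{lemInterpolation}, with equality only when $I_Tu$ is locally constant, hence $u$ constant. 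Once this linear-algebra fact is in hand, existence is the elementary statement that a coercive quadratic functional on a finite-dimensional affine space attains its minimum, and the rest is the standard closed-implies-exact argument on the simply connected universal cover.
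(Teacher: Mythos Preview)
The paper does not give its own proof of this theorem; it is quoted verbatim from \cite[Theorem~3.9]{BoSk16} and used as a black box. Your variational argument (minimize $E_T$ over multi-valued functions with prescribed real periods, then integrate the conjugate on the dual using simple connectedness of $\widetilde T$) is the standard approach and is essentially correct, and your use of the Interpolation Lemma~\ref{lemInterpolation} to get positive semidefiniteness of $E_T$ with kernel the constants---without assuming individual weights $c(e)$ are positive---is exactly the right way to handle the non-standard boundary-triangle weights in this setting.

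One wording issue: when you construct $v$, you write that the edge form $\vec e\mapsto c(e)(u(h_e)-u(t_e))$ ``is closed in the combinatorial sense: its sum around the boundary of each face vanishes.'' That is not the relevant condition. Since $v$ lives on faces and \eqref{eqdefconj} prescribes its differences across dual edges, the integrability condition is that the sum around each \emph{vertex} of the primal (equivalently, around each face of the dual) vanishes---and that is precisely discrete harmonicity \eqref{eqdefharm}. You seem to realize this a sentence later when you speak of ``a closed combinatorial $1$-form on the dual,'' but as written the first sentence is misleading. Also, you silently use that the affine space of multi-valued functions with prescribed periods is nonempty; this is elementary (e.g.\ cut along a system of loops representing the basis and prescribe jumps), but worth one line.
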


Denote $u_{T,P}=\text{Re} \phi_{T,P}$, where $\phi_{T,P}$ is the discrete 
holomorphic integral defined in Theorem~\ref{theoAbelExist} for each 
vector $P\in\R^{2g}$. Analogously, let $\phi_{{\cal R},P}:\widetilde{\cal 
R}\to\C$ 
be a holomorphic integral whose periods have  {\em real parts} 
$A_1,\dots, A_g,B_1,\dots, B_g$, respectively. Denote $u_{{\cal R},P}=\text{Re} 
\phi_{{\cal R},P}$.

\begin{lemma}\label{lemEnergyFormConv}
 For every $\delta>0$ and every vector $P\in\R^{2g}$ there are constants 
$\Const_{P,\delta,{\cal R},\varrho}$, $\const_{P,\delta,{\cal R},\varrho}>0$ 
such that 
for any adapted triangulation $T$ of $\cal R$ with maximal edge length 
$h<\const_{P,\delta,{\cal R},\varrho}$ and minimal face angle $\delta>0$ we 
have 
\[ |E_T(u_{T,P})-E(u_{{\cal R},P})| \leq \Const_{P,\delta,{\cal R}, 
\varrho}\cdot 
h.\]
\end{lemma}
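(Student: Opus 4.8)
\textbf{Proof plan for Lemma~\ref{lemEnergyFormConv}.}

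The plan is to compare both energies with the energy of a common competitor and use the two variational principles (discrete in Lemma~\ref{lemdiscVari}, smooth: a harmonic multi-valued function with prescribed real periods minimizes the Dirichlet energy among all multi-valued functions with those periods). Concretely, I would first pick a smooth multi-valued function $w:\widetilde{\cal R}\to\R$ with real periods $A_1,\dots,A_g,B_1,\dots,B_g$ that is \emph{fixed independently of $T$}; for instance $w=u_{{\cal R},P}$ itself, or a smoothing of a piecewise-affine representative. Restricting to the vertices gives a discrete multi-valued function with the same periods, so by the discrete variational principle $E_T(u_{T,P})\le E_T(w|_{\widetilde V})$. By Theorem~\ref{lemConvEnergy} (energy convergence) applied to the smooth harmonic function $w=u_{{\cal R},P}$ we get $E_T(w|_{\widetilde V})\le E(w)+\Const\cdot h = E(u_{{\cal R},P})+\Const_{P,\delta,{\cal R},\varrho}\cdot h$. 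This yields the upper bound $E_T(u_{T,P})\le E(u_{{\cal R},P})+\Const\cdot h$.

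For the reverse inequality I would go through the interpolation: by Lemma~\ref{lemInterpolation}, $E_T(u_{T,P})=E(I_Tu_{T,P})$, and $I_Tu_{T,P}$ is a continuous multi-valued function on $\widetilde{\cal R}$ with the same real periods $A_1,\dots,A_g,B_1,\dots,B_g$ (the interpolation is built edge-by-edge so it respects the deck-transformation shifts). Hence by the smooth variational principle $E(u_{{\cal R},P})\le E(I_Tu_{T,P})=E_T(u_{T,P})$. Combining the two inequalities gives
\[
0\le E_T(u_{T,P})-E(u_{{\cal R},P})\le \Const_{P,\delta,{\cal R},\varrho}\cdot h,
\]
which is the assertion (and in fact a one-sided estimate, with $E_T(u_{T,P})\ge E(u_{{\cal R},P})$ always).

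The main obstacle is the applicability of Theorem~\ref{lemConvEnergy} to the chosen competitor: that theorem is stated for a smooth multi-valued \emph{harmonic} function, and $u_{{\cal R},P}=\Re\phi_{{\cal R},P}$ is indeed harmonic, so this is fine — but one must make sure that the constants produced depend only on $P,\delta,{\cal R},\varrho$ and not on the (yet unknown) triangulation, which is exactly what Theorem~\ref{lemConvEnergy} provides once the competitor is fixed. A secondary point to verify carefully is that restricting the smooth multi-valued $u_{{\cal R},P}$ to $\widetilde V$ genuinely produces a \emph{discrete} multi-valued function with the prescribed periods in the sense of Section~\ref{secDiscHarm} (the period shifts are constants, so $u_{{\cal R},P}(d_{\alpha_k}z)-u_{{\cal R},P}(z)=A_k$ for all vertices $z$), and symmetrically that $I_Tu_{T,P}$ inherits the periods from $u_{T,P}$; both are immediate from the definitions but should be stated. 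No fine geometry near the branch points is needed here beyond what is already packaged into Lemma~\ref{lemConvEnergy} and Lemma~\ref{lemInterpolation}.
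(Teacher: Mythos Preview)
Your proposal is correct and follows exactly the paper's approach: sandwich $E_T(u_{T,P})-E(u_{{\cal R},P})$ between $0$ (via the interpolation lemma and the smooth Dirichlet principle applied to $I_Tu_{T,P}$) and $E_T(u_{{\cal R},P}|_{\widetilde V})-E(u_{{\cal R},P})$ (via the discrete variational principle), then bound the latter by Theorem~\ref{lemConvEnergy} with the harmonic competitor $u_{{\cal R},P}$. The verification points you flag (periods are preserved under restriction and interpolation, harmonicity of $u_{{\cal R},P}$) are precisely the ingredients the paper uses implicitly.
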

\begin{proof}
From the interpolation lemma~\ref{lemInterpolation} we know that 
$E_T(u_{T,P})=E(I_Tu_{T,P})$ 
and the interpolation function $I_Tu_{T,P}$ is continuous and piecewise smooth. 
Using Lemma~\ref{lemdiscVari} and its smooth counterpart (Dirichlet's 
principle) we deduce from Theorem~\ref{lemConvEnergy} that
\[0\leq E(I_Tu_{T,P})-E(u_{{\cal R},P})=E_T(u_{T,P})-E(u_{{\cal R},P}) \leq 
 E_T(u_{{\cal R},P}|_{\widetilde{V}}) -E(u_{{\cal R},P}) \leq 
\Const_{P,\delta,{\cal 
R},\varrho}\cdot h.\]
\end{proof}

For each $l=1,\dots,g$ denote by $\phi^l_{T^*}=(\text{Re} \phi^l_{T^*} 
:\widetilde{V}\to\R, \; \text{Im} \phi^l_{T^*}:\widetilde{F}\to\R)$ the unique 
(up to constant) discrete 
holomorphic integral with $A$-periods given by 
$A_k=i\delta_{kl}$, 
where $k=1,\dots,g$. The $g\times g$-matrix $\Pi_{T^*}$ whose $l$-th column is
formed by the $B$-periods of $\phi^l_{T^*}$ divided by $i$, where 
$l=1,\dots,g$, is called the 
{\em dual period matrix} of the triangulation $T$.

The following theorem connects the period matrices to the energies.
\begin{lemma}[{\cite[Lemmas~3.14 \&~3.15]{BoSk16}}]\label{lemEquad}
 \begin{enumerate}[(i)]
  \item The energy $E_T(u_{T,P})$ is a quadratic form in the vector 
$P\in\R^{2g}$ with the block matrix
\[ E_T:= \begin{pmatrix} \mathrm{Re} \Pi_{T^*} (\mathrm{Im}\Pi_{T^*})^{-1} 
\mathrm{Re} 
\Pi_{T}+\mathrm{Im}\Pi_{T} & -(\mathrm{Im}\Pi_{T^*})^{-1} \mathrm{Re} \Pi_{T} \\
-\mathrm{Re} \Pi_{T^*} (\mathrm{Im}\Pi_{T^*})^{-1} & 
(\mathrm{Im}\Pi_{T^*})^{-1} \end{pmatrix} .\]
\item The energy $E(u_{{\cal R},P})$ is a quadratic form in the vector 
$P\in\R^{2g}$ with the block matrix
\[ E_{\cal R}:= \begin{pmatrix} \mathrm{Re} \Pi_{\cal R} 
(\mathrm{Im}\Pi_{\cal R})^{-1} \mathrm{Re} 
\Pi_{\cal R}+\mathrm{Im}\Pi_{\cal R} & -(\mathrm{Im}\Pi_{\cal R})^{-1} 
\mathrm{Re} 
\Pi_{\cal R} \\
-\mathrm{Re} \Pi_{\cal R} (\mathrm{Im}\Pi_{\cal R})^{-1} & 
(\mathrm{Im}\Pi_{\cal R})^{-1} \end{pmatrix} .\]
 \end{enumerate}
\end{lemma}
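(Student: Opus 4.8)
The plan is to follow the proof of~\cite[Lemmas~3.14 and~3.15]{BoSk16}. The common idea for both parts is a \emph{period formula} that expresses the relevant Dirichlet‑type energy of the real part of a (smooth or discrete) holomorphic integral directly in terms of the complex periods of that integral, followed by elimination of the imaginary parts of those periods via the period matrix. Throughout, write $\mathcal A=(\mathcal A_1,\dots,\mathcal A_g)$ and $\mathcal B=(\mathcal B_1,\dots,\mathcal B_g)$ for the \emph{complex} $A$- and $B$-periods of the integral under consideration; by construction $\mathrm{Re}\,\mathcal A=A$ and $\mathrm{Re}\,\mathcal B=B$, where $P=(A,B)\in\R^{2g}$.

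For part~(ii), recall that $|\nabla u_{{\cal R},P}|^2\,dx\,dy=\frac{i}{2}\,\omega\wedge\bar\omega$ in any conformal chart, where $\omega=d\phi_{{\cal R},P}$ is the associated holomorphic $1$-form. The Riemann bilinear relations then give
\[ E(u_{{\cal R},P})=\frac{i}{2}\int_{\cal R}\omega\wedge\bar\omega=\frac{i}{2}\sum_{k=1}^{g}\bigl(\mathcal A_k\bar{\mathcal B}_k-\mathcal B_k\bar{\mathcal A}_k\bigr)=\sum_{k=1}^{g}\bigl(\mathrm{Re}\,\mathcal A_k\,\mathrm{Im}\,\mathcal B_k-\mathrm{Re}\,\mathcal B_k\,\mathrm{Im}\,\mathcal A_k\bigr). \]
Using $\mathcal B=\Pi_{\cal R}\mathcal A$ together with the classical facts that $\mathrm{Re}\,\Pi_{\cal R}$ and $\mathrm{Im}\,\Pi_{\cal R}$ are symmetric and that $\mathrm{Im}\,\Pi_{\cal R}$ is positive definite (in particular invertible), the constraints $\mathrm{Re}\,\mathcal A=A$ and $\mathrm{Re}\,\mathcal B=B$ force $\mathrm{Im}\,\mathcal A=(\mathrm{Im}\,\Pi_{\cal R})^{-1}(\mathrm{Re}\,\Pi_{\cal R}\,A-B)$ and hence determine $\mathrm{Im}\,\mathcal B$ as well. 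Substituting these expressions into the displayed identity and collecting the terms quadratic in $A$, bilinear in $A$ and $B$, and quadratic in $B$ is a short matrix computation and yields the quadratic form with block matrix $E_{\cal R}$.

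For part~(i), the same scheme applies with the smooth ingredients replaced by discrete ones. In place of the Riemann bilinear relations, a discrete Green/Stokes identity is used on the triangulation cut open along the curves $\alpha_k,\beta_k$: fixing an orientation of each edge and inserting the conjugacy relation~\eqref{eqdefconj} for $\phi_{T,P}=(\mathrm{Re}\,\phi_{T,P},\mathrm{Im}\,\phi_{T,P})$ gives
\[ E_T(u_{T,P})=\sum_{e\in E}\bigl(\mathrm{Re}\,\phi_{T,P}(h_e)-\mathrm{Re}\,\phi_{T,P}(t_e)\bigr)\bigl(\mathrm{Im}\,\phi_{T,P}(l_e)-\mathrm{Im}\,\phi_{T,P}(r_e)\bigr)=\sum_{k=1}^{g}\bigl(\mathrm{Re}\,\mathcal A_k\,\mathrm{Im}\,\mathcal B_k-\mathrm{Re}\,\mathcal B_k\,\mathrm{Im}\,\mathcal A_k\bigr), \]
where now $\mathcal A,\mathcal B$ are the complex periods of $\phi_{T,P}$. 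The only structural difference is that $\Pi_T$ need not be symmetric, so the relation between $\mathcal A$ and $\mathcal B$ must be written through \emph{both} $\Pi_T$ and the dual period matrix $\Pi_{T^*}$: expanding $\phi_{T,P}=\sum_{l}(\mathrm{Re}\,\mathcal A_l)\,\phi^l_T+\sum_{l}(\mathrm{Im}\,\mathcal A_l)\,\phi^l_{T^*}$ in the basis $\{\phi^l_T,\phi^l_{T^*}\}$ and reading off the $B$-periods gives $\mathcal B=\Pi_T(\mathrm{Re}\,\mathcal A)+i\,\Pi_{T^*}(\mathrm{Im}\,\mathcal A)$. Since $\mathrm{Im}\,\Pi_{T^*}$ is invertible, the constraints $\mathrm{Re}\,\mathcal A=A$, $\mathrm{Re}\,\mathcal B=B$ again determine $\mathrm{Im}\,\mathcal A$ and $\mathrm{Im}\,\mathcal B$; substituting into the energy identity and collecting terms, using the reciprocity between $\Pi_T$ and $\Pi_{T^*}$ and the symmetry of their imaginary parts, produces the block matrix $E_T$.

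I expect the genuine difficulty to lie entirely in the discrete case: establishing the discrete Green identity carefully on the cut-open triangulated surface, verifying that $\mathrm{Im}\,\Pi_{T^*}$ is positive definite (so that $\mathrm{Im}\,\mathcal A$ is well defined), and pinning down the reciprocity relations between $\Pi_T$ and $\Pi_{T^*}$ that make the collected quadratic form come out exactly as $E_T$. All of these inputs are provided by the cited results of~\cite{BoSk16}; granting them, what remains is the routine linear algebra of eliminating $\mathrm{Im}\,\mathcal A$ and $\mathrm{Im}\,\mathcal B$, which is identical in structure to the smooth case.
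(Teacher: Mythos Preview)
The paper does not give its own proof of this lemma; it is quoted verbatim from \cite[Lemmas~3.14 \&~3.15]{BoSk16}. Your sketch correctly reconstructs the argument of that reference: the period formula $E=\sum_k(\mathrm{Re}\,\mathcal A_k\,\mathrm{Im}\,\mathcal B_k-\mathrm{Re}\,\mathcal B_k\,\mathrm{Im}\,\mathcal A_k)$ together with the elimination of $\mathrm{Im}\,\mathcal A$ via $\mathcal B=\Pi_T\,\mathrm{Re}\,\mathcal A+i\,\Pi_{T^*}\,\mathrm{Im}\,\mathcal A$ (resp.\ $\mathcal B=\Pi_{\cal R}\mathcal A$) is exactly how the block matrices arise there, and your identification of the needed auxiliary facts (discrete Green identity, invertibility of $\mathrm{Im}\,\Pi_{T^*}$, reciprocity between $\Pi_T$ and $\Pi_{T^*}$) matches what \cite{BoSk16} supplies.
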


Combining Lemmas~\ref{lemEnergyFormConv} and~\ref{lemEquad}, we obtain:

\begin{corollary}\label{corConvE}
 Let $\{T_n\}$ be a nondegenerate uniform sequence of adapted triangulations of 
$\cal R$ with maximal edge length tending to zero as $n\to\infty$. Let 
$P_n\in\R^{2g}$ be a sequence of $2g$-dimensional real vectors converging to a 
vector $P\in\R^{2g}$. Then $E_{T_n}(u_{T_n,P_n})\to E(u_{{\cal R},P})$ as 
$n\to\infty$.
\end{corollary}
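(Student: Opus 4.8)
The plan is to combine the quantitative energy convergence from Lemma~\ref{lemEnergyFormConv} with the matrix identities from Lemma~\ref{lemEquad}, upgrading pointwise-in-$P$ control to control of the matrices themselves. First I would note that by Lemma~\ref{lemEnergyFormConv}, for every fixed $P\in\R^{2g}$ we have $E_{T_n}(u_{T_n,P})\to E(u_{{\cal R},P})$; since the triangulations are non-degenerate uniform, the constant $\const_{P,\delta,{\cal R},\varrho}$ and the threshold in that lemma can be taken uniform over $n$ once $n$ is large. Applying this to the vector $P$ and to the $2g+1$ vectors $P+e_j$ (where $e_j$ is the $j$-th standard basis vector), or equivalently evaluating the quadratic forms on a fixed spanning set of $\R^{2g}$ together with sums of basis vectors to recover off-diagonal entries by polarization, we conclude that the symmetric matrix $E_{T_n}$ of Lemma~\ref{lemEquad}(i) converges entrywise to the matrix $E_{\cal R}$ of Lemma~\ref{lemEquad}(ii) as $n\to\infty$.

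Next I would extract the convergence of $P_n$-evaluations from this matrix convergence. Writing $Q_n(P):=E_{T_n}(u_{T_n,P})=P^\top E_{T_n}P$ and $Q(P):=E(u_{{\cal R},P})=P^\top E_{\cal R}P$, we have
\[
|Q_n(P_n)-Q(P)|\leq |P_n^\top(E_{T_n}-E_{\cal R})P_n| + |P_n^\top E_{\cal R}P_n - P^\top E_{\cal R}P|.
\]
The first term is bounded by $\|E_{T_n}-E_{\cal R}\|\cdot\|P_n\|^2$, and since $P_n\to P$ the norms $\|P_n\|$ are bounded; combined with $\|E_{T_n}-E_{\cal R}\|\to 0$ this term tends to $0$. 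The second term tends to $0$ by continuity of the fixed quadratic form $Q$ at $P$. Hence $E_{T_n}(u_{T_n,P_n})=Q_n(P_n)\to Q(P)=E(u_{{\cal R},P})$, which is exactly the claim.

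The only subtle point is the passage from "for each fixed $P$, $E_{T_n}(u_{T_n,P})\to E(u_{{\cal R},P})$" to "$E_{T_n}\to E_{\cal R}$ as matrices"; this is precisely where I would invoke polarization for symmetric bilinear forms, evaluating the quadratic forms on the finite set $\{e_j\}_{j=1}^{2g}\cup\{e_j+e_k\}_{j<k}$ to recover every entry of the matrices, and here the uniformity of the constants over the non-degenerate uniform sequence is what lets the finitely many limits hold simultaneously. I do not expect any genuine obstacle: the heavy lifting is already in Lemma~\ref{lemEnergyFormConv} (which in turn rests on Theorem~\ref{lemConvEnergy}), and the present corollary is a soft finite-dimensional linear-algebra wrap-up. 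The mildest care needed is simply ensuring that the implicit constant $\Const_{P,\delta,{\cal R},\varrho}$ is finite for each of the finitely many probe vectors $P$ used, which is immediate.
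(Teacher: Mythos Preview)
Your proposal is correct and is exactly the natural elaboration of what the paper intends: the paper simply says the corollary follows by ``combining Lemmas~\ref{lemEnergyFormConv} and~\ref{lemEquad}'', and in the subsequent proof of Theorem~\ref{theoPeriodConv} it makes the same passage explicit by deducing $\|E_T-E_{\cal R}\|\leq\Const_{\delta,{\cal R},\varrho}\cdot h$ from Lemma~\ref{lemEnergyFormConv} via evaluation on finitely many vectors. One cosmetic remark: the block matrix $E_{T}$ in Lemma~\ref{lemEquad}(i) is not a priori symmetric (it involves both $\Pi_T$ and $\Pi_{T^*}$), but since only its symmetric part enters the quadratic form $P\mapsto P^\top E_T P$, your polarization argument recovers that symmetric part and the rest of the estimate goes through unchanged.
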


\begin{proof}[Proof of Theorem~\ref{theoPeriodConv}.]
 Both $E_{T_n}(u_{T_n,P})$ and $E(u_{{\cal R},P})$ are quadratic forms in 
$P\in\R^{2g}$ by Lemma~\ref{lemEquad} with block matrices $E_T$ and $E_{\cal 
R}$, respectively. Thus by Lemma~\ref{lemEnergyFormConv} for every $\delta>0$ 
there are constants 
$\Const_{\delta,{\cal R},\varrho},\const_{\delta,{\cal R},\varrho}>0$ such that 
for any adapted triangulation $T$ of $\cal R$ with maximal edge length 
$h<\const_{\delta,{\cal R},\varrho}$ and minimal face angle $\delta>0$ we 
have $\|E_T-E_{\cal R}\| \leq \Const_{\delta,{\cal R}, \varrho}\cdot h$. From 
this inequality we deduce estimates on $\|\text{Re} \Pi_{T} -\text{Re} 
\Pi_{\cal R}\|$ and $\|\text{Im} \Pi_{T} -\text{Im} \Pi_{\cal R}\|$ of the 
same type, but with different constants which are derived in the following. 
These estimates complete the proof.
\begin{itemize}
 \item As $\|(\text{Im}\Pi_{T^*})^{-1}- (\text{Im}\Pi_{\cal R})^{-1}\|\leq 
\Const_{\delta,{\cal R}, \varrho}\cdot h$ for $h<\const_{\delta,{\cal 
R},\varrho}$ 
there exist new constants $\Const_{\delta,{\cal R}, \varrho}'>0$ and 
$\const_{\delta,{\cal R},\varrho}>\const_{\delta,{\cal R},\varrho}'>0$ such 
that 
$\|\text{Im}\Pi_{T^*}\|\leq \Const_{\delta,{\cal R}, \varrho}'$ for 
$h<\const_{\delta,{\cal R},\varrho}'$.
\item Thus for $h<\const_{\delta,{\cal R},\varrho}'$ we deduce
\begin{align*}
 \Const_{\delta,{\cal R}, \varrho}\cdot h \geq & \| (\text{Im}\Pi_{T^*})^{-1} 
\text{Re} \Pi_{T} -(\text{Im}\Pi_{\cal R})^{-1} \text{Re} \Pi_{\cal R}\| \\
&= \| (\text{Im}\Pi_{T^*})^{-1} (\text{Re} \Pi_{T}- \text{Re} \Pi_{\cal R}) 
-((\text{Im}\Pi_{\cal R})^{-1}- (\text{Im}\Pi_{T^*})^{-1} ) \text{Re} \Pi_{\cal 
R}\| \\
&\geq \|(\text{Im}\Pi_{T^*})^{-1}\|\cdot \|\text{Re} \Pi_{T} -\text{Re} 
\Pi_{\cal R}\|- \|(\text{Im}\Pi_{T^*})^{-1}- (\text{Im}\Pi_{\cal 
R})^{-1}\|\cdot \|\text{Re} \Pi_{\cal R}\| \\
&\geq (\Const_{\delta,{\cal R}, \varrho}')^{-1}\cdot \|\text{Re} \Pi_{T} 
-\text{Re} \Pi_{\cal R}\|- \Const_{\delta,{\cal R}, \varrho}\cdot h \cdot 
\|\text{Re} \Pi_{\cal R}\| .
\end{align*}
Therefore, $\|\text{Re} \Pi_{T} -\text{Re} \Pi_{\cal R}\| \leq 
\Const_{\delta,{\cal R}, \varrho}''\cdot h$, where $\Const_{\delta,{\cal R}, 
\varrho}''=\Const_{\delta,{\cal R}, \varrho}'\cdot \Const_{\delta,{\cal R}, 
\varrho} 
\cdot (1+\|\text{Re} \Pi_{\cal R}\|)$.

Analogously, we see that $\|\text{Re} \Pi_{T^*} -\text{Re} \Pi_{\cal R}\| \leq 
\Const_{\delta,{\cal R}, \varrho}''\cdot h$.
\item By similar estimates as for the previous item, we obtain
\[\|\text{Re} \Pi_{T^*} (\text{Im}\Pi_{T^*})^{-1} \text{Re} \Pi_{T}- \text{Re} 
\Pi_{\cal R} (\text{Im}\Pi_{\cal R})^{-1} \text{Re} \Pi_{\cal R}\| \leq 
\Const_{\delta,{\cal R}, \varrho}'''\cdot h,\]
where $\Const_{\delta,{\cal R}, \varrho}'''= \Const_{\delta,{\cal R}, 
\varrho}'' 
\cdot \|(\text{Im}\Pi_{\cal R})^{-1}\|(1+2\|\text{Re} \Pi_{\cal R}\|) + 
\Const_{\delta,{\cal R}, \varrho} \cdot (\Const_{\delta,{\cal R}, \varrho}'' 
+\|\text{Re} \Pi_{\cal R}\|)^2$.
\item Finally, we deduce from
\[\|\text{Re} \Pi_{T^*} (\text{Im}\Pi_{T^*})^{-1} \text{Re} \Pi_{T} 
+\text{Im}\Pi_{T}- \text{Re} \Pi_{\cal R} 
(\text{Im}\Pi_{\cal R})^{-1} \text{Re} \Pi_{\cal R} -\text{Im}\Pi_{\cal R}\| 
\leq \Const_{\delta,{\cal R}, \varrho}\cdot h\]
together with the previous estimate that $\|\text{Im} \Pi_{T} -\text{Im} 
\Pi_{\cal R}\| 
\leq (\Const_{\delta,{\cal R}, \varrho}''' +\Const_{\delta,{\cal R}, \varrho}) 
\cdot h$. 
\end{itemize}
\end{proof}

\section{Proof of convergence of discrete holomorphic 
integrals}\label{SecConvAbelProof}

The strategy of the proof of Theorem~\ref{theoConvAbelInt} follows the 
corresponding ideas in~\cite[Section~5]{BoSk16}. Due to our different setup, we 
need some modifications.

\subsection{Equicontinuity}\label{SecEqui}

In this section we consider triangulations $T'$ of branched coverings with 
boundary. The main goal is to consider (sufficiently small) intrinsic discs 
about a branch points or about a regular point and derive an estimate for 
harmonic functions there. A function $u:V'\to\R$ is discrete harmonic on $T'$ 
if it satisfies~\eqref{eqdefharm} at every non-boundary vertex. 
Denote $E'_{T'}(u)=\sum\limits_{e=[x,y]\in E'\setminus\partial E'} 
c([x,y])(u(y)-u(x))^2$, where the sum is over non-boundary edges. Let the 
{\em eccentricity} $e$ denote the number $\Const$ such the triangulation $T$  
satisfies conditions (A), (D), (U) from Section~\ref{SecConvAbel}, where (A) 
and (D) only hold for every non-boundary edge.

Let $T$ be a non-degenerate uniform adapted triangulation of the branched 
covering of $\cal R$. We assume that $T'$ is a simply connected part of $T$. 
For simplicity, we directly consider the projection of all triangles into $\C$ 
by $Pr_{\cal R}$.

\begin{lemma}[Equicontinuity lemma]\label{lemEqui}
\begin{enumerate}[(i)]
 \item Let $T'$ be contained in an open disc $B_r(v)\subset\C$ where 
$2r$ is smaller than the minimum distance of $v$ to any branch point, but 
$r\geq 10\cdot h$. 
Denote by $h'$ twice the maximum circumradius of the triangles of $T'$. Let 
$u:V'\to\R$ be a discrete harmonic function. Let $z,w\in V'$ with Euclidean 
distance $|z-w|\geq h'$ and such that $3|z-w|<r<dist(zw,\partial T')$ for some 
$r>0$. Here $dist(zw,\partial T')$ denotes the distance of the straight line 
segment from $z$ to $w$ to the boundary of $T'$. Then there exists a constant 
$\Const_{e}>0$ such that 
\begin{equation}\label{eqequiu1}
|u(z)-u(w)|\leq \Const_{e} \cdot E'_{T'}(u)^{1/2} \cdot \left(\log 
\frac{r}{3|z-w|} \right)^{-1/2}.
\end{equation}
For $|z-w|<h'<r/3$ the same inequality holds with $|z-w|$ replaced by $h'$.
\item Let $T'$ be contained in an open intrinsic disc $B_{r_O}(O)\subset\C$ 
about some branch points $O$. Let $u:V'\to\R$ be a discrete harmonic function.

Consider the chart $g_O(z)=(z-O)^{\gamma_O}$, which maps the triangulation $T'$ 
contained in ${\cal C}_O$ to an embedded triangulation $T'_g$ in a neighborhood 
of the origin in $\C$.
Denote by $h'$ twice the maximum circumradius of the triangles of $T'_g$. Let 
$z,w\in V'$ with Euclidean 
distance $|g_O(z)-g_O(w)|= |(z-O)^{\gamma_O}- (w-O)^{\gamma_O}| \geq h'$ and 
such that $3|(z-O)^{\gamma_O}- (w-O)^{\gamma_O}|<r<dist(g_O(z)g_O(w),\partial 
T'_g)$ for some $r>0$. Here $dist(g_O(z)g_O(w),\partial T')$ denotes the 
distance of the straight line segment from $g_O(z)$ to $g_O(w)$ to the boundary 
of $T'_g$. Then there exists a constant $\Const_{e}>0$ such that 
\begin{equation}\label{eqequiu2}
|u(z)-u(w)|\leq \Const_{e} \cdot E'_{T'}(u)^{1/2} \cdot \left(\log 
\frac{r}{3|(z-O)^{\gamma_O}- (w-O)^{\gamma_O}|} \right)^{-1/2}.
\end{equation}
For $|(z-O)^{\gamma_O}- (w-O)^{\gamma_O}|<h'<r/3$ the same inequality holds 
with $|(z-O)^{\gamma_O}- (w-O)^{\gamma_O}|$ replaced by $h'$.
\item Let $T'$ be contained in the open intrinsic disc 
$\hC\setminus B_{\varrho}(\infty)\subset\hC$. Then for the image $T'_{1/z}$ of 
$T'$ under the map $1/z$ the estimates in~(i) and~(ii) hold depending on 
whether $\infty$ is a regular point or a branch point of $\cal R$.
\end{enumerate}
\end{lemma}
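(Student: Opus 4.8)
The plan is to prove~(i) by a discrete length--area (extremal length) estimate, to obtain~(ii) by running the \emph{same} estimate in the chart $g_O$, and to reduce~(iii) to~(i) or~(ii) via the conformal map $z\mapsto 1/z$.

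For~(i), fix $z,w$, put $\rho:=3|z-w|$ (resp.\ $\rho:=3h'$ when $|z-w|<h'$), and let $m$ be the midpoint of the segment $zw$. Since $|z-m|,|w-m|\le|z-w|/2$ and $\rho<r<\mathrm{dist}(zw,\partial T')$, the concentric circles $S_t=\partial B_t(m)$ for $\rho\le t\le r$ lie in the interior of $T'$ (away from $\partial T'$) and enclose $z$ and $w$. As every triangle has diameter $\le h$, the faces of $T'$ met by $S_t$ form, for a non-degenerate triangulation, a single combinatorial annular strip of width $\le 2h$; write $\sigma_t\subset E'\setminus\partial E'$ for the cycle of interior edges along its inner boundary and $\Omega_t:=\max_{\sigma_t}u-\min_{\sigma_t}u$. (The hypotheses $|z-w|\ge h'$, $r\ge 10h$ and $3|z-w|<r$ are exactly what make $\sigma_\rho$ exist, enclose $z$ and $w$, and $\log(r/\rho)$ positive.) Two facts are then used. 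First, conditions (A) and (D) force $c(e)\in[\Const_e^{-1},\Const_e]\subset(0,\infty)$, so the discrete harmonic function $u$ obeys the maximum principle: on any combinatorial disc the oscillation of $u$ equals its oscillation over the bounding cycle. Hence $|u(z)-u(w)|\le\Omega_\rho$, and --- the combinatorial discs bounded by the $\sigma_t$ being nested --- $t\mapsto\Omega_t$ is non-decreasing. Second, by Cauchy--Schwarz and condition (U), which bounds the number of edges of $\sigma_t$ by $\Const_e\,t/h$,
\[
\Omega_t^2\le\Bigl(\sum_{e=[x,y]\in\sigma_t}|u(x)-u(y)|\Bigr)^2\le\Bigl(\sum_{e\in\sigma_t}\tfrac1{c(e)}\Bigr)\Bigl(\sum_{e=[x,y]\in\sigma_t}c(e)\bigl(u(x)-u(y)\bigr)^2\Bigr)\le\Const_e\,\frac th\sum_{e=[x,y]\in\sigma_t}c(e)\bigl(u(x)-u(y)\bigr)^2 .
\]

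Next choose radii $\rho=t_0<t_1<\dots<t_N\le r$ with $t_{k+1}-t_k\asymp h$, the spacing (depending only on the eccentricity) large enough that the strips, hence the cycles $\sigma_{t_k}$, are pairwise edge-disjoint. Dividing the last inequality (at $t=t_k$) by $\Const_e\,t_k/h$, using $\Omega_\rho\le\Omega_{t_k}$, and summing over $k$ gives
\[
\frac{\Omega_\rho^2}{\Const_e}\sum_{k=0}^N\frac{h}{t_k}\le\sum_{k=0}^N\ \sum_{e=[x,y]\in\sigma_{t_k}}c(e)\bigl(u(x)-u(y)\bigr)^2\le E'_{T'}(u),
\]
the last step being valid since the $\sigma_{t_k}$ consist of pairwise distinct interior edges. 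Since $\sum_{k=0}^N h/t_k\ge\Const_e^{-1}\log(r/\rho)$ (a Riemann-sum comparison with $\int_\rho^r dt/t$), this gives $\Omega_\rho^2\le\Const_e\,E'_{T'}(u)/\log(r/\rho)$, which with $|u(z)-u(w)|\le\Omega_\rho$ is~\eqref{eqequiu1}. For~(ii) I would run this verbatim with all metric quantities measured in the chart $g_O$: the map $g_O$ unwraps ${\cal C}_O$ to an embedded planar triangulation $T'_g$ near the origin, the adapted-size condition makes the $g_O$-images of incident vertices lie within distance $h$ (so the strips around the circles $\partial B_t(g_O(m))$ have width $\le 2h$), and condition (U) --- which near a branch point is stated precisely for the $g_O$-image --- bounds the number of edges of $\sigma_t$ by $\Const_e\,t/h$. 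The only non-metric inputs, the two-sided bound on $c(e)$ and the relation~\eqref{eqdefharm} (equivalently the maximum principle), are unchanged; in particular one never needs $u$ to be discrete harmonic for the cotan-weights of $T'_g$. This yields~\eqref{eqequiu2}. Finally~(iii) follows by applying~(i) or~(ii) to the image of $T'$ under $z\mapsto 1/z$: near $\infty$ the edge weights are by definition the cotan-weights of that image triangulation, so the pulled-back $u$ is discrete harmonic there in the required sense, and the image has a regular point (apply~(i)) or a branch point (apply~(ii)) at the origin.

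The step I expect to be the main obstacle is the discrete length--area estimate together with the bookkeeping that produces a factor $\log(r/\rho)$ rather than a power of $r/\rho$: this requires the combinatorial strips around $S_{t_0},\dots,S_{t_N}$ to be honest, pairwise edge-disjoint cycles of interior edges of length at most $\Const_e\,t_k/h$, which is exactly where (A) (no degenerate triangles), (D) (Delaunay, hence positive weights and the maximum principle) and (U) (bounded vertex density, measured in the $g_O$-chart near a branch point) all enter. The remaining points are routine: checking that the faces met by $S_t$ form a single annular cycle, ensuring $\sigma_{t_k}\subset E'\setminus\partial E'$ (guaranteed by $r<\mathrm{dist}(zw,\partial T')$), and, in~(ii), verifying that the argument is unaffected when the branch point itself lies inside $B_r(g_O(m))$, where $u$ is still discrete harmonic.
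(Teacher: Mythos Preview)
Your proposal is correct and follows essentially the same discrete length--area (extremal length) approach as the paper's proof in Appendix~\ref{app3}: both nest annular regions around the segment $zw$, invoke the maximum principle (available since condition~(D) makes the weights positive), apply Cauchy--Schwarz along a path/cycle whose combinatorial length is controlled by condition~(U), and sum the resulting inequalities to extract the logarithm via a Riemann-sum comparison. The only differences are cosmetic --- the paper uses a sequence of concentric \emph{rectangles} $R_k$ (with sides parallel and perpendicular to $zw$) and paths from a maximum vertex $Z_k$ to a minimum vertex $W_k$ in each annular shell $R_k\setminus R_{k-1}$, whereas you use concentric \emph{circles} and oscillations over full boundary cycles; both organize the same estimate~\eqref{eqestPath}.
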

\begin{proof}
The claims are proved analogously to a similar estimate for quadrilateral 
lattices in the plane~\cite[Equicontinuity Lemma~2.4]{Sko13}, see 
also~\cite[\S~1 and Remarks~3.4 and~4.8]{Sko13}, using the approach 
of~\cite[Section~5.4]{Lu26}. For the sake of completeness, we 
present a proof in Appendix~\ref{app3}.

In case~(ii), we consider the harmonic function $u$ as defined on the 
image triangulation $T'_g$. The proof  
only uses the fact that $u$ satisfies a maximum principle which still holds in 
our case. 
For the third case, we just work with the triangulation $T'_{1/z}$ and assume 
without loss of generality that $u$ is defined there.
\end{proof}

\begin{lemma}\label{lemboundE'}
 Let $T$ be a triangulation of a ramified covering with boundary such that all 
angles are in $[\delta,\pi-\delta]$ for some $\pi/4>\delta>0$. Then there 
exist constants $\const_{\delta,\varrho}, {\Const}_{\delta,\varrho}>0$ 
such that for $0<h<\const_{\delta,\varrho}$ and every function
$u:V\to\R$ we have $E_{T'}'(u)\leq \Const_{\delta,\varrho}\cdot E_T(u)$. 
\end{lemma}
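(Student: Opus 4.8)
\textbf{Proof plan for Lemma~\ref{lemboundE'}.}

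The plan is to prove the energy comparison edge by edge, reducing everything to a statement about the cotan-type weights $c(e)$ in the three regions of the triangulation. Since $E_{T'}'(u)=\sum_{e\in E'\setminus\partial E'} c(e)(u(h_e)-u(t_e))^2$ is a sum of non-negative terms (the weights are positive once the angles are controlled, see below) and $E_T(u)$ is the analogous sum over \emph{all} edges of $T$, it suffices to show that $E_{T'}'(u)\le E_T(u)$ whenever $T'$ is obtained from $T$ by deleting some triangles — but that is immediate, as deleting triangles only removes non-negative summands and can only \emph{decrease} the coefficient of a surviving edge (each $c(e)$ is a sum of two half-cotangent contributions, one per incident triangle, and for boundary edges of $T'$ one of these contributions is simply dropped). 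Thus the real content is to verify that all the weights $c(e)$ appearing in $E_{T'}'(u)$ are non-negative, in fact uniformly comparable to their clean cotan-analogues, so that no negative term can spoil the inequality and so that the constant $\Const_{\delta,\varrho}$ can be taken to be $1$ — or, if one prefers the stated form with a genuine constant, to absorb the small perturbations coming from boundary triangles.

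First I would recall that in regions (i) and (ii) each edge weight is exactly $\tfrac12\cot\alpha_e+\tfrac12\cot\beta_e$ for angles (of the Euclidean triangle, respectively of its image under $z\mapsto 1/z$) lying in $[\delta,\pi-\delta]$; since $\delta<\pi/4$ and the two opposite angles in a Delaunay-type situation sum to at most $\pi$, each such weight is positive, and in any case bounded below by a positive constant depending only on $\delta$. For region (iii), the boundary triangles, I would invoke Remark~\ref{remest}: the part $C_e$ of the weight coming from a boundary triangle satisfies $\tfrac12\cot\hat\alpha_e-C_{\delta,\varrho}h\le C_e\le \tfrac12\cot\hat\alpha_e+C_{\delta,\varrho}h$, where $\hat\alpha_e$ is the angle in the associated Euclidean triangle; hence for $h<\const_{\delta,\varrho}$ small enough (so that $C_{\delta,\varrho}h$ is a fraction of the lower bound $\tfrac12\cot(\pi-\delta)$-type quantity), these weights too are positive and bounded away from $0$ and from $\infty$ by constants depending only on $\delta$ and $\varrho$. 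Collecting the bounds: there is $c_0=c_0(\delta,\varrho)>0$ with $c(e)\ge c_0>0$ for every non-boundary edge $e$ of $T'$.

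With positivity in hand, the inequality $E_{T'}'(u)\le \Const_{\delta,\varrho}\,E_T(u)$ follows termwise: for every non-boundary edge $e$ of $T'$ the term $c_{T'}(e)(u(h_e)-u(t_e))^2$ appearing on the left is at most $c_T(e)(u(h_e)-u(t_e))^2$ appearing on the right (the coefficient can only shrink when passing to $T'$, and $T'\subset T$ so the edge is present in $T$), and all omitted terms on the right are non-negative; so in fact $\Const_{\delta,\varrho}=1$ works. If instead one wishes to be careful about the sign issue at edges where a Delaunay-violating pair could in principle make a single $c(e)$ negative, the uniform lower bound $c(e)\ge c_0$ together with a matching uniform upper bound $c(e)\le C_0(\delta,\varrho)$ gives $c_{T'}(e)\le (C_0/c_0)\,c_T(e)$ edge by edge, yielding the claim with $\Const_{\delta,\varrho}=C_0/c_0$. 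I expect the only genuinely delicate point to be the boundary triangles: one must check that the interpolation-defined constants $C_{[x,y]},C_{[y,z]},C_{[z,x]}$ from \eqref{eqdefc} are all positive and bounded for boundary triangles whose angles (and whose rectified Euclidean angles) lie in $[\delta,\pi-\delta]$, which is exactly the estimate quantified in Remark~\ref{remest} and carried out in Appendix~\ref{secApp1}; once that is cited, the rest is a one-line termwise comparison.
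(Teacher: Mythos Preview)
Your termwise approach has a genuine gap. You argue that the terms omitted when passing from $E_T(u)$ to $E'_{T'}(u)$ are all non-negative, so that dropping them can only decrease the sum and $\Const_{\delta,\varrho}=1$ works; but the lemma does \emph{not} assume a Delaunay condition, and with only the hypothesis that all angles lie in $[\delta,\pi-\delta]$ (with $\delta<\pi/4$) individual edge weights can be negative. Concretely, for a boundary edge $[x,z]$ of $T$ there is a single incident triangle, so its weight is $\tfrac12\cot\alpha_y$ where $\alpha_y\in[\delta,\pi-\delta]$ is the one opposite angle; when $\alpha_y>\pi/2$ this is negative, and removing that summand from $E_T(u)$ \emph{increases} the energy. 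Your remark that ``the coefficient can only shrink when passing to $T'$'' is likewise false for the same reason: dropping a negative half-cotangent contribution enlarges the weight. Your fallback argument via a uniform lower bound $c(e)\ge c_0>0$ is circular, since no such bound exists without an assumption like $\alpha_e+\beta_e\le\pi$, which is not part of the lemma's hypotheses.

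The paper's proof avoids this by decomposing by \emph{triangles} instead of edges. Each triangle contribution $E_{T_\Delta}(u)=\int_\Delta|\nabla I_Tu|^2$ is automatically non-negative, and $E_T(u)=\sum_\Delta E_{T_\Delta}(u)$. The substantive step is the inequality
\[
E_{T_\Delta}(u)\ \ge\ \Const_\delta\cdot\bigl|\tfrac12\cot\alpha_y\bigr|\,(u(x)-u(z))^2
\]
for the triangle $\Delta$ containing the boundary edge $[x,z]$, proved algebraically from $\alpha_x+\alpha_y+\alpha_z=\pi$ together with $\alpha_x,\alpha_z\ge\delta$ (the explicit constant is $\Const_\delta=1/(\cot^2\delta-1)$, which is where $\delta<\pi/4$ enters). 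Summing over boundary-adjacent triangles then bounds $E'_{T'}(u)-E_T(u)$ by a constant multiple of $E_T(u)$. This positivity-at-the-triangle-level mechanism is exactly the idea your proposal is missing.
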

\begin{proof}
Let $\Delta\in F'$ be a triangle with vertices $x,y,z\in V$ such that $[x,z]$ 
is 
a boundary edge of $T'$. Denote the angle in $\Delta$ at the vertex 
$v\in\{x,y,z\}$ by $\alpha_v$.

 First consider the case that $\Delta\not\in F_{\varrho}$ is no boundary 
triangle. We want to show that
\begin{align}\label{eqestEbound}
E_{T_\Delta}(u) &= \textstyle \frac{1}{2}\cot\alpha_x (u(y)-u(z))^2 
+\textstyle \frac{1}{2}\cot\alpha_z (u(x)-u(y))^2 +\textstyle 
\frac{1}{2}\cot\alpha_y (u(z)-u(x))^2 
\\ 
&\geq \Const_{\delta}\cdot |\textstyle \frac{1}{2}\cot\alpha_y| (u(z)-u(x))^2.
\end{align}
holds for some constant $\Const_{\delta}>0$. Thus
we only need to consider the case $\alpha_y>\pi/2$. Take 
$\Const_{\delta}=1/(\cot^2\delta-1)$. As $\alpha_x,\alpha_z>\delta$ and 
$\alpha_x +\alpha_y +\alpha_z=\pi$, elementary calculations imply that
\[0\leq 1+ \Const_{\delta}\cdot (1-\cot\alpha_x\cdot \cot\alpha_z )= 
\cot\alpha_x\cdot \cot\alpha_z +\cot\alpha_y(1+\Const_{\delta})(\cot\alpha_x 
+\cot\alpha_z).\]
This implies~\eqref{eqestEbound}.

If $\Delta\in F_{\varrho}$, we know that 
\[C_{[x,z]}=\cot\alpha^E_y +h\cdot r_y,\qquad
 C_{[z,y]}=\cot\alpha^E_x +h\cdot r_x,\qquad
C_{[y,x]}=\cot\alpha^E_z +h\cdot r_y,
\]
where $\alpha_v^E$ denotes the angle at the vertex $v$ in the Euclidean 
triangle with vertices $x,y,z$ and $|r_v|\leq \Const_{\delta,\varrho}$, see 
Appendix~\ref{secApp1}. Therefore, there are constants
$\const_{\delta,\varrho}, \widetilde{\Const}_{\delta,\varrho}>0$ such that for 
all 
$0<h<\const_{\delta,\varrho}$ we have $E_{T_\Delta}(u) \geq 
\widetilde{\Const}_{\delta,\varrho} |C_{[x,z]}| (u(z)-u(x))^2$.

Take $\Const_{\delta,\varrho}= \max\{ \widetilde{\Const}_{\delta,\varrho}, 
\Const_{\delta}\}$, sum the above inequalities over all such faces and deduce 
$E'_{T'}(u)- E_T(u)\leq \Const_{\delta,\varrho}\cdot E_T(u)$. Now the claim 
follows.
\end{proof}

\subsection{Convergence of multi-valued discrete harmonic functions and 
discrete holomorphic integrals}

As a first step, we can deduce that the uniform limit of a sequence of discrete 
harmonic functions is harmonic. To this end, we say that a sequence of 
triangulated polygons $\{T_n\}$ approximates a domain $\Omega\subset\C$, if for 
$n\to\infty$ the following three quantities tend  to zero: the maximal distance 
from a point of the boundary $\partial T_n$ to the set $\partial \Omega$, the 
maximal distance from a point of $\partial \Omega$ to the set $\partial T_n$, 
and the maximal edge length of the triangulation $T_n$.

\begin{lemma}[{\cite[Lemma~5.2]{BoSk16}}]\label{lemHarmLimit}
 Let $\{T_n\}$ be a non-degenerate uniform sequence of Delaunay triangulations 
of polygons with boundary approximating a domain $\Omega$, such that no 
branch point in on $\partial \Omega$. Let 
$u_n:V_n\to\R$ be a sequence of discrete harmonic functions uniformly
converging to a continuous function $u:\Omega\to\R$. Then the function 
$u:\Omega\to\R$ is harmonic.
\end{lemma}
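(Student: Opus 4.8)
The plan is to show that $u$ satisfies the mean value property on small circles, or equivalently that $u$ is weakly harmonic and hence smoothly harmonic by Weyl's lemma. I would work locally: fix a point $p\in\Omega$ and a closed disk $\overline{B_{2\rho}(p)}\subset\Omega$ disjoint from all branch points. Since $\{T_n\}$ approximates $\Omega$, for $n$ large the part $T_n'$ of $T_n$ lying over $B_{2\rho}(p)$ is a genuine (single-sheeted) planar Delaunay triangulation, and $u_n$ restricted to it is discrete harmonic in the ordinary planar sense. The strategy is then to quote the known convergence theory for discrete harmonic functions on non-degenerate uniform Delaunay triangulations in the plane: a uniformly converging sequence of such functions has a harmonic limit. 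Concretely, one passes to the harmonic extension: let $\widehat{u}_n$ be the (smooth) harmonic function on $B_{2\rho}(p)$ with the same boundary values as the piecewise-linear interpolant of $u_n$ on $\partial T_n'$ — or more robustly, use the energy estimates and the Equicontinuity Lemma~\ref{lemEqui} already available in the present paper.

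More precisely, here is the sequence of steps I would carry out. First, using Lemma~\ref{lemboundE'} and the hypothesis that $\{T_n\}$ is non-degenerate uniform, the restricted energies $E'_{T_n'}(u_n)$ are controlled; combined with the Equicontinuity Lemma~\ref{lemEqui}(i) this gives uniform control of oscillations of $u_n$ on compact subsets of $B_{2\rho}(p)$, compatible with the assumed uniform convergence to $u$. Second, I would test discrete harmonicity against a fixed smooth compactly supported function $\varphi\in C_c^\infty(B_\rho(p))$. The discrete harmonic equation~\eqref{eqdefharm} says that the cotan-Laplacian of $u_n$ vanishes; summing against $\varphi$ evaluated at vertices and using the standard fact that the cotan-Laplacian is the finite-element stiffness matrix for piecewise-linear elements on the triangulation, one gets
\[
\int_{B_{2\rho}(p)} \nabla (I_{T_n} u_n)\cdot \nabla (I_{T_n}\varphi)\, dx\, dy \;=\; (\text{boundary terms that vanish since }\varphi\text{ has compact support}).
\]
Third, I would let $n\to\infty$: the interpolants $I_{T_n}\varphi \to \varphi$ in $H^1$ by standard finite-element approximation (this uses the non-degeneracy, i.e.\ the minimal angle bound), and $I_{T_n} u_n \to u$ uniformly hence in $L^2_{loc}$, while the gradients $\nabla(I_{T_n}u_n)$ are bounded in $L^2_{loc}$ by the energy bound, so along a subsequence they converge weakly in $L^2_{loc}$ to some $G$; identifying $G=\nabla u$ (as a distributional gradient, using the uniform convergence of $I_{T_n}u_n$) yields $\int_{B_{2\rho}(p)} \nabla u\cdot\nabla\varphi = 0$ for all test functions $\varphi$. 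Fourth, Weyl's lemma gives that $u$ is harmonic on $B_{2\rho}(p)$, and since $p$ was an arbitrary point of $\Omega$ away from branch points (and by hypothesis there are no branch points on $\partial\Omega$, so every interior point has such a neighborhood), $u$ is harmonic on $\Omega$.

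The main obstacle is the identification of the weak limit $G$ of $\nabla(I_{T_n}u_n)$ with $\nabla u$: the gradients live on different triangulations for each $n$, so one cannot directly say $\nabla(I_{T_n}u_n)$ is the gradient of $u$. The clean way around this is to note that for any fixed $\psi\in C_c^\infty$ one has $\int I_{T_n}u_n \,\partial_j\psi = -\int \partial_j(I_{T_n}u_n)\,\psi \to -\int G_j\,\psi$ while the left side $\to \int u\,\partial_j\psi$ by uniform convergence; hence $G_j = \partial_j u$ in the distributional sense, so no subsequence choice is actually lost. A second, more technical point is justifying that $I_{T_n}\varphi\to\varphi$ in $H^1$ and that the energy $E_{T_n}'(u_n) = \|\nabla(I_{T_n}u_n)\|_{L^2}^2$ (this is exactly the Interpolation Lemma~\ref{lemInterpolation} applied locally, where all triangles in question are Euclidean since we are away from branch points and inside $B_\varrho$); both are standard once the minimal-angle and uniformity hypotheses are in force. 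Since this is essentially Lemma~5.2 of~\cite{BoSk16} transported to the present (locally planar, away-from-branch-point) setting, I would simply invoke that argument, noting that the only place branch points or the covering structure could interfere is excluded by the hypothesis that $\partial\Omega$ contains no branch point and by restricting to small disks.
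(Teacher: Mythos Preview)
The paper does not give its own proof of this lemma; it is simply quoted from~\cite{BoSk16}. So there is no in-paper argument to compare against, only the question of whether your sketch stands on its own.

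Your weak-formulation approach (test against $\varphi\in C_c^\infty$, use the finite-element identity $\sum_{[x,y]}c([x,y])(u_n(y)-u_n(x))(\varphi(y)-\varphi(x))=\int\nabla(I_{T_n}u_n)\cdot\nabla(I_{T_n}\varphi)$, pass to the limit, invoke Weyl) is a perfectly reasonable route. The identification $G=\nabla u$ via the distributional argument you give is clean and correct.

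There is, however, a genuine gap at the point where you assert that ``the gradients $\nabla(I_{T_n}u_n)$ are bounded in $L^2_{\mathrm{loc}}$ by the energy bound.'' The hypothesis of the lemma gives only uniform convergence of $u_n$; it says nothing about energies. Lemma~\ref{lemboundE'}, which you cite, merely compares $E'_{T'}(u)$ with $E_T(u)$ --- it does not bound either one. So as written you have no $L^2$ bound on the gradients, and without it you cannot extract the weak limit, nor can you control the error term $\int\nabla(I_{T_n}u_n)\cdot(\nabla(I_{T_n}\varphi)-\nabla\varphi)$ via Cauchy--Schwarz.

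The gap is fillable: what you need is a discrete Caccioppoli (interior energy) estimate, namely that a discrete harmonic function bounded by $M$ on $B_{2\rho}(p)$ has energy at most $\Const_\delta\, M^2$ on $B_\rho(p)$. This follows from testing the discrete harmonic equation against $u_n$ times a cutoff and using the angle bounds; it is standard for cotan weights but you must state and prove (or cite) it explicitly. Alternatively, one can bypass energies altogether and argue via the maximum principle: compare $u_n$ on a disk with the discrete solution of the Dirichlet problem whose boundary data are the restriction of the smooth harmonic extension of $u$, and use convergence of the discrete Dirichlet problem. This second route is likely closer to what~\cite{BoSk16} actually does.
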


\begin{theorem}[Convergence of multi-valued discrete harmonic 
functions]\label{theoConvmultiharmonic}
 Let $\{T_n\}$ be a non-degenerate uniform sequence of adapted Delaunay 
triangulations of $\cal R$ with maximal edge length $h_n$ tending to zero as 
$n\to\infty$. Let $z_n\in \widetilde{V}_n$ be a sequence of vertices converging 
to 
a point $z_0\in\widetilde{\cal R}$. Let $P_n\in\R^{2g}$ be a sequence of 
vectors 
converging to a vector $P\in\R^{2g}$. Then the functions 
$u_{T_n,P_n}:\widetilde{V}_n\to\R$ satisfying $u_{T_n,P_n}(z_n)=0$ converge to 
$u_{{\cal R},P}:\widetilde{\cal R}\to\R$ with $ u_{{\cal R},P}(z_0)=0$ 
uniformly on every compact subset.
\end{theorem}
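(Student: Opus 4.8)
The plan is to combine the energy convergence results already established (Corollary~\ref{corConvE} and Lemma~\ref{lemEnergyFormConv}) with the equicontinuity estimates of Lemma~\ref{lemEqui} to extract a convergent subsequence and then identify its limit. First I would observe that by Corollary~\ref{corConvE} the energies $E_{T_n}(u_{T_n,P_n})$ are uniformly bounded, hence the restricted energies $E'_{T'_n}(u_{T_n,P_n})$ on any simply connected subregion are bounded by Lemma~\ref{lemboundE'}. Normalizing so that $u_{T_n,P_n}(z_n)=0$, the equicontinuity Lemma~\ref{lemEqui} then gives a uniform modulus of continuity for the functions $u_{T_n,P_n}$ on compact subsets of $\widetilde{\cal R}$ away from branch points, and — using part~(ii) of that lemma with the chart $g_O$ — also near branch points once one checks that the chart image of a compact neighborhood stays within the required radii. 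Combined with the uniform bound, a diagonal argument over an exhaustion of $\widetilde{\cal R}$ by compact sets produces a subsequence converging uniformly on compact subsets to some continuous limit $u:\widetilde{\cal R}\to\R$ with $u(z_0)=0$.

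Next I would identify this limit. Away from the branch points, Lemma~\ref{lemHarmLimit} applied to small Delaunay triangulated polygons approximating small disks shows that $u$ is harmonic there. Near a branch point $O$, I would pass to the chart $g_O$ and again invoke Lemma~\ref{lemHarmLimit} for the image triangulation $T'_{n,g}$ (which is a genuine Euclidean Delaunay triangulation of a neighborhood of the origin), concluding that $u\circ g_O^{-1}$ is harmonic in a punctured neighborhood of the origin; since $u$ is continuous, the singularity is removable, so $u$ is harmonic across $O$ as well. Thus $u$ is a (globally, single-valued on $\widetilde{\cal R}$) harmonic function, and the deck-transformation relations defining the periods pass to the limit because $P_n\to P$, so $u$ has periods $P$. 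By uniqueness of the normalized multi-valued harmonic function with prescribed periods (the smooth Dirichlet principle, the continuous analogue of Lemma~\ref{lemdiscVari}), $u=u_{{\cal R},P}$.

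Finally, the subsequential limit being independent of the subsequence forces the whole sequence $u_{T_n,P_n}$ to converge to $u_{{\cal R},P}$ uniformly on compact subsets. The one extra point to record is that the energy of the limit matches: one has $E(u)\le\liminf E'_{T'_n}(u_{T_n,P_n})$ by lower semicontinuity together with $E_{T_n}(u_{T_n,P_n})\to E(u_{{\cal R},P})$ from Corollary~\ref{corConvE}, which both pins down $u=u_{{\cal R},P}$ and confirms no energy escapes to infinity or concentrates at branch points.

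I expect the main obstacle to be the behavior near the branch points: one must verify that the equicontinuity estimate~\eqref{eqequiu2} really applies uniformly in $n$ on a fixed punctured neighborhood of $O$ — in particular that, after the chart $g_O$, the adapted triangulations $T_{n,g}$ are uniformly non-degenerate Delaunay away from the origin (this is exactly what conditions (A), (D), (U) and the adapted-size assumption are designed to guarantee, with the singular scaling $h_O=h^{1/\gamma_O}$ absorbed), and that the removable-singularity step is legitimate, i.e.\ that $u$ does not blow up at $O$. Both follow from the uniform energy bound and the structure of adapted triangulations, but making the constants in Lemma~\ref{lemEqui} genuinely independent of $n$ in the chart near $O$ is the delicate part.
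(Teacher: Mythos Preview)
Your overall strategy---bound the energies via Corollary~\ref{corConvE} and Lemma~\ref{lemboundE'}, apply the Equicontinuity Lemma~\ref{lemEqui} (including its branch-point version~(ii)) to get equicontinuity and uniform boundedness, run Arzel\`a--Ascoli on an exhaustion, identify the limit as harmonic via Lemma~\ref{lemHarmLimit}, remove the singularities at branch points by local boundedness, and conclude by uniqueness---is exactly the paper's argument.

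There is, however, one incorrect step in your identification of the limit near a branch point. You assert that after applying the chart $g_O(z)=(z-O)^{\gamma_O}$ the image $T'_{n,g}$ is ``a genuine Euclidean Delaunay triangulation'' to which Lemma~\ref{lemHarmLimit} applies. It is not: the triangles of the adapted triangulation are Euclidean in the \emph{original} $Pr_{\cal R}$-chart, and $g_O$ is nonlinear, so their images are curvilinear. More importantly, the discrete harmonic equation~\eqref{eqdefharm} uses the cotan-weights computed in the original chart, which do \emph{not} coincide with cotan-weights of the straight-edge triangulation on the $g_O$-images. Thus $u$ is not discrete harmonic for the image triangulation in the sense required by Lemma~\ref{lemHarmLimit}. (The reason the Equicontinuity Lemma~\ref{lemEqui}(ii) \emph{does} transfer to the $g_O$-chart is that its proof only uses the maximum principle, which survives because the original weights are positive; Lemma~\ref{lemHarmLimit} needs the actual cotan structure.)

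The fix---and this is what the paper does---is simply to drop this detour: apply Lemma~\ref{lemHarmLimit} only on domains bounded away from all branch points, where the triangulation is honestly Euclidean Delaunay in the $Pr_{\cal R}$-chart. This already shows $u$ is harmonic on $\widetilde{\cal R}$ minus the (discrete) set of branch-point preimages. Since your equicontinuity argument has already established that $u$ is continuous, hence locally bounded, at each branch point, the standard removable-singularity theorem for harmonic functions extends $u$ harmonically across those points. Your final energy/lower-semicontinuity remark is then unnecessary; uniqueness of the normalized multi-valued harmonic function with prescribed real periods finishes the proof directly.
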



\begin{proof}
 We will start with some estimates on compact subsets of $\widetilde{\cal R}$ 
of a special form. Let $\pi=Pr_{\cal R}\circ p:\widetilde{\cal R}\to{\C}$ be 
the local projection map $Pr_{\cal R}$ composed with the universal covering $p$.
For $v\in\widetilde{\cal R}$ denote by 
$\widetilde{B}_r(v)\subset \widetilde{\cal R}$ 
the subset which projects for $\pi(v)\in\C$ to an open intrinsic disc 
$B_r(\pi(v))=\pi(\widetilde{B}_r(v))$ with radius $r$ about $\pi(v)$. If 
$\pi(v)=\infty$, 
we assume that $\pi(\widetilde{B}_r(v))= \C\setminus B_{1/r}(0)$. We restrict 
ourselves to the following cases: 
\begin{itemize}
\item $\pi(v)=O$ is a branch point and 
$r=r_O(v)>0$ its associated radius defined in Section~\ref{subsecConv1}, 
\item $\pi(v)\in B_{\varrho}\setminus \bigcup\limits_{O \text{ branch point}} 
B_{r_O}(O)$ and $r_O^{min}/8<r\leq r_O^{min}/2$, where 
$r_O^{min}/2:=\min\limits_{O \text{ branch point}}r_O/2$,
\item $\pi(v)=\infty$ and $r=1/\varrho$.
\end{itemize}
Note that the union of these sets $\widetilde{B}_r(v)$ covers $\widetilde{\cal 
R}$ and every compact 
set $K\subset \widetilde{\cal R}$ is contained in the union of finitely many of 
these sets. 

Let $\widetilde{B}_r(v)$ be one of these sets. Consider those 
triangles of the given triangulation $\widetilde{T_n}$ which are completely 
contained in $\widetilde{B}_r(v)$ and denote by $\widetilde{T}_n(v,r)$ the 
connected 
component of these triangles which contains $v$. Choose $n_1$ such that for all 
$n>n_1$ the maximal edge length $h_n<r_O^{min}/200$. Consider 
$u_{\widetilde{V}_n(v,r)}:= u_{T_n,P_n}|_{\widetilde{V}_n(v,r)}$. By 
Lemma~\ref{lemboundE'} and Corollary~\ref{corConvE} the sequence of energies 
$E'_{\widetilde{T}_n(v,r)}(u_{\widetilde{V}_n(v,r)})$ is bounded. Thus the 
Equicontinuity lemma~\ref{lemEqui} implies that the function 
$u_{\widetilde{V}_n(v,r)}|_{\widetilde{V}_n\cap 
\widetilde{B}_{\frac{3}{4}r}(v)}$ has 
uniformly bounded differences. That is, there exists a constant $\Const_{{\cal 
R}, P,\delta}$ such that for all $n>n_1$ and $z,w\in \widetilde{V}_n\cap 
\widetilde{B}_{\frac{3}{4}r}(v)$ we have $|u_{T_n,P_n}(z)- u_{T_n,P_n}(w)|\leq 
\Const_{{\cal R}, P,\delta}$. Lemma~\ref{lemEqui} also implies that the 
sequence 
is equicontinuous, that is, there exists a function $\delta(\varepsilon)$ for 
$\varepsilon>0$ such that for each  $n>n_1$ and $z,w\in \widetilde{V}_n\cap 
\widetilde{B}_{\frac{3}{4}r}(v)$ with $|z-w|<\delta(\varepsilon)$ we have 
$|u_{T_n,P_n}(z)- u_{T_n,P_n}(w)|\leq \varepsilon$.

Now take a sequence of compact sets $K_1\subset K_2\subset \dots \subset 
\widetilde{\cal R}$ such that $\widetilde{\cal R}=\bigcup_{j=1}^\infty K_j$. 
Assume that $K_1$ contains all point of the convergent sequence $\{z_n\}$. 
Since $K_1$ is compact, it is contained in the union of finitely many of the 
sets 
considered above. Therefore, the sequence $u_{T_n,P_n}|_{\widetilde{V}_n\cap 
K_1}$ is equicontinuous and has uniformly bounded differences (this bound also 
depends on $K_1$). Furthermore, as all $z_n\in K_1$ and $u_{T_n,P_n}(z_n)=0$, 
the sequence $u_{T_n,P_n}|_{\widetilde{V}_n\cap K_1}$ is uniformly bounded. We 
deduce from the Arzel\`{a}-Ascoli theorem that there is a continuous 
function $u_1:K_1\to\R$ and a subsequence $\{l_k\}$ with $l_1=n_1$ such that 
$u_{T_{l_k},P_{l_k}}$ converges to $u_1$ uniformly on $K_1$.

Analogously, we see that there is a continuous function $u_1:K_1\to\R$ and a 
subsequence $\{m_k\}$ of $\{l_k\}$  with $m_1=n_1$, $m_2=l_2$ such that 
$u_{T_{m_k},P_{m_k}}$ converges to $u_2$ uniformly on $K_2$. Clearly, we have 
$u_1=u_2$ on $K_1$. This procedure can be continued and eventually we obtain a 
continuous function $u:\widetilde{\cal R}\to\R$ and a subsequence $\{n_k\}$ of 
$\{1,2,3,\dots \}$ such that $u_{T_{n_k},P_{n_k}}$ converges uniformly to $u$ 
on each compact subset of $\widetilde{\cal R}$. Also, $u$ has the same periods 
$P$ as $u_{{\cal R}, P}$ and $u(z_0)=0$. Applying Lemma~\ref{lemHarmLimit} to
bounded domains not containing any branch point, we see that the limit function 
$u:\widetilde{\cal R}\to\R$ is harmonic in $\widetilde{\cal R}$ except possibly 
at the
branch points. But as $u$ is locally bounded, these singularities can be 
removed and therefore the continuous function $u$ is in fact harmonic on the 
whole surface $\widetilde{\cal R}$. Thus $u=u_{\widetilde{\cal R},P}$ by our 
normalization $u(z_0)=0=u_{\widetilde{\cal R},P}(z_0)$.

Since the limit function $u=u_{\widetilde{\cal R},P}$ is unique, it follows 
that the whole sequence $u_{T_n,P_n}$, not just the subsequence 
$u_{T_{n_k},P_{n_k}}$, converges to $u_{\widetilde{\cal R},P}$ uniformly on 
every compact subset.
\end{proof}

\begin{proof}[Proof of Theorem~\ref{theoConvAbelInt}]
 Let $P_n,P\in\R^{2g}$ be the periods of the real parts $\text{Re} 
\phi^l_{T_n}: \widetilde{V}\to\R$ and $\text{Re} \phi^l_{\cal R}: 
\widetilde{{\cal 
R}}\to \R$ of the discrete and smooth holomorphic integrals, 
respectively. Then by Theorem~\ref{theoAbelExist} $\text{Re} 
\phi^l_{T_n}=u_{T_n,P_n}$ and $\text{Re} 
\phi^l_{\cal R}= u_{\widetilde{\cal R},P}$. 
Theorem~\ref{theoPeriodConv}, implies that 
$P_n\to P$ as $n\to\infty$. Thus we deduce from 
Theorem~\ref{theoConvmultiharmonic} that the real parts $\text{Re} 
\phi^l_{T_n}$ converge to $\text{Re} \phi^l_{\cal R}$ uniformly on every 
compact subset. Convergence of the imaginary parts is proven analogously due to 
the following Lemma~\ref{lemConjFunc}.
\end{proof}

\begin{lemma}[Conjugate Functions Principle]\label{lemConjFunc}
 Let $f=(\text{Re} f :\widetilde{V}\to\R,\; \text{Im} f:\widetilde{F}\to\R)$ be 
a 
discrete holomorphic integral. Then $E_T(\text{Re} f)= E_T(\text{Im}f)$.
\end{lemma}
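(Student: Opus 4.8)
The plan is to derive the identity edgewise, directly from the conjugacy relation~\eqref{eqdefconj}, and then sum over all edges. Write $u=\text{Re} f$ and $v=\text{Im} f$. Since $f$ is a (multi-valued) discrete holomorphic integral, for every oriented edge $\vec{e}\in\vec{E}$ we have $v(l_e)-v(r_e)=c(e)\bigl(u(h_e)-u(t_e)\bigr)$, where $l_e,r_e\in F$ and $h_e,t_e\in V$; although $u$ and $v$ are multi-valued on $\widetilde{V}$ and $\widetilde{F}$, only differences across a single edge enter, so both sides are well defined independently of the chosen lift. Recall also that $E_T(v)$ as in~\eqref{eqdefEconj} is well defined here precisely because all weights $c(e)$ are nonzero (a standing assumption for discrete holomorphic integrals).

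First I would square the conjugacy relation to get $\bigl(v(l_e)-v(r_e)\bigr)^2=c(e)^2\bigl(u(h_e)-u(t_e)\bigr)^2$ for each $e\in E$; note that the right-hand side does not depend on the orientation of $e$. Dividing by $c(e)$ yields
\[
\frac{1}{c(e)}\bigl(v(l_e)-v(r_e)\bigr)^2 = c(e)\bigl(u(h_e)-u(t_e)\bigr)^2 .
\]
Summing over all $e\in E$, the left-hand side is exactly $E_T(v)=E_T(\text{Im} f)$ by~\eqref{eqdefEconj}, while the right-hand side equals $\sum_{e=[x,y]\in E}c([x,y])\bigl(u(x)-u(y)\bigr)^2=E_T(u)=E_T(\text{Re} f)$. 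This gives the claim $E_T(\text{Re} f)=E_T(\text{Im} f)$.

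There is essentially no obstacle here: the statement is an immediate algebraic consequence of the definition of conjugate functions together with the two energy functionals. The only points that deserve a word of care are that the edge weights must be nonzero for $E_T(\text{Im} f)$ to make sense (which is assumed throughout for discrete holomorphic integrals) and that, on the universal cover, all the relevant differences of the multi-valued functions $u$ and $v$ are independent of lifts and of edge orientations, so that the edgewise identity may legitimately be summed over $E$.
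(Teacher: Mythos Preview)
Your proof is correct and is exactly the argument the paper has in mind: the paper's own proof simply says the lemma ``follows immediately from~\eqref{eqdefconj} together with the definitions of the discrete energies in~\eqref{eqdefE} and~\eqref{eqdefEconj}'', and you have spelled out precisely this edgewise squaring-and-summing. Nothing more is needed.
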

\begin{proof}
 This follows immediately from~\eqref{eqdefconj} together with the definitions 
of the discrete energies in~\eqref{eqdefE} and~\eqref{eqdefEconj}.
\end{proof}

\section{Improved convergences of period matrices and holomorphic integrals for 
polyhedral surfaces}\label{SecPoly}

The techniques applied for adapted triangulations near branch points may also 
be used to improve the order of convergence of period matrices and holomorphic 
integrals for polyhedral surfaces compared 
to the results obtained in~\cite{BoSk16}.
A polyhedral surface ${\cal S}$ is an oriented 
two-dimensional manifold without boundary which has a piecewise flat metric 
with isolated conical singularities. An example is the surface of a polyhedron 
in three-dimensional space. Let $T_{\cal S}$ be a geodesic triangulation of the 
polyhedral surface ${\cal S}$ such that all faces are flat triangles. Note in 
particular, that all singular points of the metric are vertices of $T_{\cal 
S}$. 
On all edges we use {cotan weights} given by~\eqref{eqcotandef}.

If $\gamma_O> 1/2$, we do not adapt the triangulation further. But for 
singularities $O$ with $\gamma_O\leq 1/2$ we consider a
chart $g_O$, which maps a neighborhood ${\cal C}_O$ of $O$ to a neighborhood of 
the origin in $\C$. Furthermore, we can introduce as above ``polar 
coordinates'' $(r,\phi)$ on ${\cal C}_O$ with the origin at the vertex $O$. We 
map all vertices in ${\cal C}_O$ to a neighborhood of the origin in $\C$ by the 
chart $g_O:{\cal C}_O\to\C$, 
$g_O(r,\phi):=r^{\gamma_O}\text{e}^{i\gamma_O\phi}$.
If $\gamma_O\leq 1/2$ we demand that the images of any two incident 
vertices in ${\cal C}_O$ have maximum distance $h$. 
Consider any triangle 
$\Delta$ in ${\cal C}_O$ whose vertex $z$ nearest to $O$ satisfies $|Oz|\geq 
h_O=h^{1/\gamma_O}$, where $|Oz|$ denotes the distance of $z$ to $O$ in $\cal 
S$. As in Section~\ref{subsecConv1} we deduce from our assumption that the 
maximal edge length in $\Delta$ 
is smaller than $h\cdot |Oz|^{1-\gamma_O}$.

Applying the estimates of Sections~\ref{SecEstTriangle} and~\ref{SecEstEnergy}, 
we obtain the following improved versions of 
Theorems~2.5 and~2.7 of~\cite{BoSk16}.

\begin{theorem}[Energy convergence]
 For each $\delta>0$ and each smooth multi-valued harmonic function 
$u:\widetilde{{\cal S}}\to \R$ there are two constants 
$\Const_{u,\delta,{\cal S}}, \const_{u,\delta,{\cal S}}>0$ such 
that for any 
adapted triangulation $T$ of ${\cal S}$ with maximal edge length 
$h<\const_{u,\delta,{\cal S}}$ and minimal face angle $>\delta$ we have
\[|E_T(u|_{\widetilde{V}})-E(u)|\leq \Const_{u,\delta,{\cal S}}\cdot h.\]
\end{theorem}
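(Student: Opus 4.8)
The plan is to reduce the polyhedral case to the already-established results for ramified coverings of $\hC$ (Theorem~\ref{lemConvEnergy}) by noting that the two settings share the same local structure: a finite number of conical singularities together with a region where all metric data is Euclidean and all derivatives of a smooth harmonic function are bounded. Concretely, I would split $\cal S$ as $\cal S = G_{\cal S} \cup \bigcup_{O:\,\gamma_O\leq 1/2}{\cal C}_O$, where $G_{\cal S}$ is the union of all triangles that are not contained in the adapted neighborhood of any singularity with $\gamma_O\leq 1/2$. On $G_{\cal S}$ the function $u$ has bounded first and second derivatives, so Lemma~\ref{lemTriangEst}(i) applied triangle-by-triangle and summed (exactly as in Lemma~\ref{lemestG}) gives $|E_{G_{\cal S,T}}(u)-E_{G_{\cal S}}(u)|\leq \Const_{u,\delta,{\cal S}}\cdot h$; here the factor $\area(\Delta)$ makes the sum telescope to the total area of $G_{\cal S}$, which is finite since $\cal S$ is compact. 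Note that singularities with $\gamma_O>1/2$ require no special treatment for this linear-in-$h$ estimate: near such a cone point the derivative bounds of Lemma~\ref{lemDerivEst} read $\|D^1u\|\lesssim\rho^{\gamma_O-1}$ and $\|D^2u\|\lesssim\rho^{\gamma_O-2}$ with $\gamma_O>1/2$, and the resulting triangle estimate $\Const\cdot h\cdot\int_\Delta\rho^{\gamma_O-1}\,d\rho\,d\phi$ already sums to $O(h)$ over an ordinary (unadapted) triangulation of the neighborhood because $\int_0^{r_O}\rho^{\gamma_O-1}\,d\rho<\infty$ when $\gamma_O>0$.

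Next I would treat each singularity $O$ with $\gamma_O\leq 1/2$ exactly as a branch point is treated in Section~\ref{SecEstEnergy}. The adapted-size condition on the chart $g_O(r,\phi)=r^{\gamma_O}e^{i\gamma_O\phi}$ is literally the same as in Section~\ref{subsecConv1}, so the derivative estimates of Lemma~\ref{lemDerivEst}, the triangle estimate of Lemma~\ref{lemestEtriang}, and its summed form Lemma~\ref{lemestEO} all carry over verbatim, giving $|E_{T_O\setminus T_{O,h_O}}(u)-E_{S_O\setminus S_{O,h_O}}(u)|\leq\Const_{\delta,\gamma_O,r_O,u}\cdot h$. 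For the innermost region $S_{O,h_O}$ one invokes Lemma~\ref{lemestESOh} for the smooth energy and Lemma~\ref{lemestETOh} (via Lemma~\ref{lemestaltETOh}) for the discrete energy, each bounded by $\Const\cdot h$. Summing these contributions over the finitely many singularities with $\gamma_O\leq 1/2$ and combining with the $G_{\cal S}$ estimate yields
\[
|E_T(u|_{\widetilde V})-E(u)|\leq |E_{G_{\cal S,T}}(u)-E_{G_{\cal S}}(u)|+\!\!\sum_{O:\,\gamma_O\leq 1/2}\!\!\big(|E_{T_O\setminus T_{O,h_O}}(u)-E_{S_O\setminus S_{O,h_O}}(u)|+E_{S_{O,h_O}}(u)+E_{T_{O,h_O}}(u)\big)\leq\Const_{u,\delta,{\cal S}}\cdot h,
\]
which is the claim. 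There are no boundary triangles $F_\varrho$ here, since $\cal S$ is a genuine polyhedral surface rather than a stereographic picture of a covering of $\hC$, so the $E_{F_\varrho}(u)$ term of the covering proof simply drops out and the bookkeeping is actually simpler.

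The only genuine point that needs care — and the step I expect to be the main obstacle — is verifying that Lemma~\ref{lemDerivEst} applies to an arbitrary conical singularity of a polyhedral surface with $0<\gamma_O\leq 1/2$, not just to branch points of a covering of $\hC$ (for which $\gamma_O\in\{1/2,1/3,1/4,\dots\}$). The derivation of Lemma~4.5 in~\cite{BoSk16} uses only that $u$ is harmonic on a punctured disc, bounded near the puncture, and that the metric is, after the chart $g_O$, the flat metric on a disc with a factor $|\zeta|^{2(1/\gamma_O-1)}$; the expansion of a bounded harmonic function in the local uniformizing coordinate $g_O$ then has no negative powers, and converting back via the chain rule produces precisely the claimed powers $\rho^{\gamma_O-1}$ and $\rho^{\gamma_O-2}$. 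This argument never used integrality of $1/\gamma_O$, so it goes through unchanged; I would state this explicitly as the one place where the generality $0<\gamma_O\leq 1/2$ (rather than $\gamma_O\in\{1/n\}$) is genuinely being exploited. Everything else is a routine transcription of Lemmas~\ref{lemestEtriang}--\ref{lemEstF0} and the proof of Theorem~\ref{lemConvEnergy} with $\cal R$ replaced by $\cal S$ and the branch-point sum restricted to $\gamma_O\leq 1/2$.
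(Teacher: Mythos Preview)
Your overall plan is exactly what the paper intends: Section~\ref{SecPoly} merely says ``Applying the estimates of Sections~\ref{SecEstTriangle} and~\ref{SecEstEnergy}'', and your decomposition into $G_{\cal S}$ plus the adapted cones ${\cal C}_O$ with $\gamma_O\leq 1/2$, together with the observation that the $F_\varrho$ term simply disappears, is precisely how one unpacks that sentence. Your remark that the derivation of Lemma~\ref{lemDerivEst} never uses integrality of $1/\gamma_O$ is correct and is indeed the only place where genuine checking is required.

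There is, however, a concrete slip in your handling of singularities with $\gamma_O>1/2$. You write that the unadapted triangle estimate is $\Const\cdot h\cdot\int_\Delta\rho^{\gamma_O-1}\,d\rho\,d\phi$, but that formula is the conclusion of Lemma~\ref{lemestEtriang}, whose proof relies on the \emph{adapted} bound $l_{\max}\leq h\,|Oz|^{1-\gamma_O}$. For an ordinary triangulation with $l_{\max}\approx h$, combining Lemma~\ref{lemTriangEst}(i) with Lemma~\ref{lemDerivEst} gives instead
\[
|E_{T_\Delta}(u)-E_\Delta(u)|\;\leq\;\Const_\delta\bigl(\rho^{\gamma_O-1}+h\,\rho^{\gamma_O-2}\bigr)\cdot h\,\rho^{\gamma_O-2}\cdot\area(\Delta)
\;\approx\;\Const_\delta\cdot h\int_\Delta\rho^{\,2\gamma_O-2}\,d\rho\,d\phi,
\]
and the relevant radial integral is $\int_0^{r_O}\rho^{\,2\gamma_O-2}\,d\rho$, which is finite precisely when $\gamma_O>1/2$. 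Your stated justification ``$\int_0^{r_O}\rho^{\gamma_O-1}\,d\rho<\infty$ when $\gamma_O>0$'' does not discriminate at all and, if it were the right integral, would make adaptation unnecessary at every conical point. The conclusion you want is correct, but the exponent and hence the explanation of the threshold $1/2$ are wrong; fix the exponent to $2\gamma_O-2$ and also note that the innermost triangles (those meeting $O$) are handled by Lemmas~\ref{lemestaltETOh} and~\ref{lemestESOh} with $h_O=h$, each yielding $O(h^{2\gamma_O})\leq O(h)$.
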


\begin{theorem}[Convergence of period matrices]
 For each $\delta>0$ there exist constants 
$\Const_{\delta,{\cal S}}$, $\const_{\delta,{\cal S}}>0$ such that 
for any adapted triangulation $T$ of ${\cal S}$ with maximal edge length 
$h<\const_{u,\delta,{\cal S}}$ and minimal face angle $>\delta$ we have
\[\|\Pi_T-\Pi_{\cal S}\|\leq \Const_{\delta,{\cal S}}\cdot h.\]
\end{theorem}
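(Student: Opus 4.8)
The plan is to run, almost verbatim, the argument that proves Theorem~\ref{theoPeriodConv}, with the branched covering ${\cal R}$ replaced by the polyhedral surface ${\cal S}$ equipped with the conformal structure of its flat metric, and with the energy convergence theorem just stated used in place of Theorem~\ref{lemConvEnergy}. What makes this possible is that the passage from an energy estimate to a period-matrix estimate is purely formal: it relies only on the interpolation identity $E_T(u)=E(I_Tu)$, the two variational principles (discrete and Dirichlet), and the quadratic-form dictionary of Lemma~\ref{lemEquad}, none of which cares whether the singularities come from branch points of a covering or from general conical points with $\gamma_O\le 1/2$.

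First I would record the polyhedral analogue of Lemma~\ref{lemEnergyFormConv}: for every $\delta>0$ and every $P\in\R^{2g}$ there is a constant $\Const_{P,\delta,{\cal S}}$ with $|E_T(u_{T,P})-E(u_{{\cal S},P})|\le\Const_{P,\delta,{\cal S}}\cdot h$. Here the situation is in fact simpler than in the covering case: a polyhedral triangulation consists of flat Euclidean triangles carrying cotan weights on every edge, so the interpolant $I_Tu_{T,P}$ is globally continuous and piecewise linear, lies in $W^{1,2}({\cal S})$, has the same periods $P$ as $u_{T,P}$, and satisfies $E_T(u_{T,P})=E(I_Tu_{T,P})$ by Duffin's triangle-wise computation (the polyhedral case of Lemma~\ref{lemInterpolation}). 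The discrete variational principle (Lemma~\ref{lemdiscVari}) together with Dirichlet's principle on the compact Riemann surface ${\cal S}$ then yield the sandwich $E(u_{{\cal S},P})\le E(I_Tu_{T,P})=E_T(u_{T,P})\le E_T(u_{{\cal S},P}|_{\widetilde{V}})$, and the polyhedral energy convergence theorem above bounds the outer gap by $\Const\cdot h$. One should check here that $E(u_{{\cal S},P})$ is finite, i.e. that the gradient of the multi-valued harmonic function $u_{{\cal S},P}$ is square integrable near each cone point; since there it is bounded by $\const\cdot\rho^{\gamma_O-1}$ with $\gamma_O>0$ (equivalently: in the chart $g_O$ it is a bounded harmonic function and the Dirichlet integral is conformally invariant), this holds.

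Next I would invoke Lemma~\ref{lemEquad}, which is valid for any triangulation of any compact Riemann surface of genus $g$ and hence for ${\cal S}$: both $E_T(u_{T,P})$ and $E(u_{{\cal S},P})$ are quadratic forms in $P\in\R^{2g}$, with block matrices $E_T$ built algebraically from $\Re\Pi_T,\Im\Pi_T,\Re\Pi_{T^*},\Im\Pi_{T^*}$ and $E_{\cal S}$ built from $\Re\Pi_{\cal S},\Im\Pi_{\cal S}$. Two quadratic forms on the finite-dimensional space $\R^{2g}$ that are pointwise $\Const\cdot h$-close have operator-norm-$\Const\cdot h$-close matrices, so the previous step gives $\|E_T-E_{\cal S}\|\le\Const_{\delta,{\cal S}}\cdot h$ for $h<\const_{\delta,{\cal S}}$. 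From this the chain of elementary matrix manipulations in the proof of Theorem~\ref{theoPeriodConv} applies unchanged: one first bounds $\|\Im\Pi_{T^*}\|$ for small $h$, then extracts $\|\Re\Pi_T-\Re\Pi_{\cal S}\|\le\Const\cdot h$ and $\|\Re\Pi_{T^*}-\Re\Pi_{\cal S}\|\le\Const\cdot h$ from the off-diagonal blocks, and finally $\|\Im\Pi_T-\Im\Pi_{\cal S}\|\le\Const\cdot h$ from the remaining block; combining these gives $\|\Pi_T-\Pi_{\cal S}\|\le\Const_{\delta,{\cal S}}\cdot h$.

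So the only genuinely new ingredient is the polyhedral energy convergence theorem above, and that is where the main work — and the main obstacle — lies: one must run the estimates of Sections~\ref{SecEstTriangle} and~\ref{SecEstEnergy} with the general exponent $\gamma_O\in(0,1/2]$ rather than with the values $\gamma_O\in\{1/n\}$ dictated by branch points. I expect this to go through because nothing in those estimates actually uses $\gamma_O=1/n$: the derivative bounds $\|D^1u\|\le\const\cdot\rho^{\gamma_O-1}$, $\|D^2u\|\le\const\cdot\rho^{\gamma_O-2}$ of Lemma~\ref{lemDerivEst} and the adapted-size bound $h\cdot|Oz|^{1-\gamma_O}$ depend on $\gamma_O$ only through the stated powers of $\rho$, so the split into the outer part $T_O\setminus T_{O,h_O}$ (total error $O(h)$ by summing the per-triangle estimates as in Section~\ref{SecEstEnergy}) and the inner part $T_{O,h_O}$ (both energies already $O(h^2)$) carries over directly; for conical singularities with $\gamma_O>1/2$ no adaptation is needed, since the unadapted triangle-wise estimate of Lemma~\ref{lemTriangEst} already sums to $O(h^2)$ there.
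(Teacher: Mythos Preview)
Your proposal is correct and matches the paper's approach: the paper does not give a separate proof for the polyhedral theorem but simply states that it follows by applying the estimates of Sections~\ref{SecEstTriangle} and~\ref{SecEstEnergy}, i.e., by rerunning the argument of Theorem~\ref{theoPeriodConv} with the polyhedral energy convergence theorem in place of Theorem~\ref{lemConvEnergy}---precisely the route you outline via Lemma~\ref{lemEnergyFormConv}, Lemma~\ref{lemEquad}, and the block-matrix manipulations. One small imprecision: for cone points with $\gamma_O>1/2$ the unadapted estimate gives a bound of order $h^{2\gamma_O}$ rather than $h^2$, but since $2\gamma_O>1$ this is still $O(h)$ and your conclusion stands.
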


\begin{theorem}[Convergence of holomorphic integrals]
 Let $\{ T_n\}$ be a sequence of non-degenerate uniform adapted triangulations 
of ${\cal S}$ with maximal edge length $h_n\to 0$ as $n\to\infty$. Let 
$\pi:\widetilde{\cal S}\to S$ be the universal covering of ${\cal S}$. Denote 
by $\widetilde{T}_n$ the corresponding triangulation of $\widetilde{\cal S}$ 
such that 
$\pi(\widetilde{T}_n)=T_n$. Let $z_n\in 
\widetilde{V}_n$ be a sequence of vertices converging to a point 
$z_0\in\widetilde{\cal S}$. 
Let $\Delta_n\in \widetilde{F}_n$ be a sequence of faces with its vertices 
converging to $z_0$. Then for each $1\leq l\leq g$ the discrete holomorphic
integrals $\phi^l_T=(\text{Re} \phi^l_T :\widetilde{V}\to\R, \;
\text{Im} \phi^l_T:\widetilde{F}\to\R)$ normalized at $z_n$ and $w_n$ converge 
uniformly on each compact set to the holomorphic integral
$\phi^l_{\cal S}:\widetilde{{\cal S}}\to\C$ normalized at $z_0$.
\end{theorem}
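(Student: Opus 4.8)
The plan is to transcribe the proof of Theorem~\ref{theoConvAbelInt} (and of Theorem~\ref{theoConvmultiharmonic}, on which it rests) to the polyhedral surface $\cal S$, using the two preceding theorems of this section --- energy convergence and period-matrix convergence for $\cal S$ --- in the roles played by Theorem~\ref{lemConvEnergy} and Theorem~\ref{theoPeriodConv} in Sections~\ref{SecConvProof}--\ref{SecConvAbelProof}, and using the flat cone metric of $\cal S$ where the ramified-covering proof used the projection $Pr_{\cal R}$. Most ingredients of Section~\ref{SecConvAbelProof} have verbatim analogues: by Theorem~\ref{theoAbelExist}, $\mathrm{Re}\,\phi^l_{T_n}=u_{T_n,P_n}$; the period-matrix convergence for $\cal S$ forces $P_n\to P$; the polyhedral analogue of Corollary~\ref{corConvE} follows from the energy-convergence theorem and the quadratic-form identity of Lemma~\ref{lemEquad} exactly as in Section~\ref{SecConvProof}; the estimate $E'_{T'}(u)\le\Const_\delta\cdot E_T(u)$ of Lemma~\ref{lemboundE'} transfers without change, since the cotan weights and the perturbation bounds of Remark~\ref{remest} are purely local and do not distinguish a face near a branch point of a covering from one near a conical point of $\cal S$; and Lemma~\ref{lemHarmLimit}, the Conjugate Functions Principle (Lemma~\ref{lemConjFunc}), and the removable-singularity argument apply unchanged.

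The only piece that genuinely needs reworking is the Equicontinuity Lemma~\ref{lemEqui}. In a region staying away from all conical singularities, part~(i) applies in the flat metric with no change. Near a conical point $O$ with $\gamma_O\le 1/2$, for which the triangulation is size-adapted, one pushes the discrete harmonic function forward by the conformal chart $g_O(r,\phi)=r^{\gamma_O}\mathrm{e}^{i\gamma_O\phi}$: conformality preserves the angles, hence the cotan weights, hence discrete harmonicity and the maximum principle, and the size-adaptation makes the image triangulation uniform, so part~(ii) of Lemma~\ref{lemEqui} applies to the image verbatim. Near a conical point with $\gamma_O>1/2$ the triangulation is not adapted, the chart $g_O$ would distort it too severely to remain uniform, and one instead proves the equicontinuity estimate directly in the intrinsic cone metric: the argument of~\cite{Sko13,Lu26} relies only on the maximum principle for discrete harmonic functions and on length--area estimates for intrinsic annuli, and both remain valid --- with constants depending on the bounded cone angle $2\pi/\gamma_O$ --- on a cone.

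With these pieces, the proof runs as in Theorem~\ref{theoConvmultiharmonic}: cover a compact $K\subset\widetilde{\cal S}$ by finitely many intrinsic discs of the allowed types (around a cone point, and around a regular point within a fixed radius bounded away from the cone points); on each, the uniform energy bound together with Lemma~\ref{lemboundE'} and the equicontinuity estimate above shows that $u_{T_n,P_n}|_{\widetilde{V}_n\cap K}$ has uniformly bounded differences and is equicontinuous, and, using the normalization $u_{T_n,P_n}(z_n)=0$, uniformly bounded. A diagonal Arzel\`{a}--Ascoli argument over an exhaustion $K_1\subset K_2\subset\cdots$ of $\widetilde{\cal S}$ yields a limit $u$ which Lemma~\ref{lemHarmLimit} makes harmonic off the cone points; local boundedness removes those singularities, so $u$ is harmonic on $\widetilde{\cal S}$ with the same periods $P$ and $u(z_0)=0$, hence $u=u_{{\cal S},P}$. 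Uniqueness of the limit upgrades the convergent subsequence to the full sequence, giving $\mathrm{Re}\,\phi^l_{T_n}\to\mathrm{Re}\,\phi^l_{\cal S}$ uniformly on compact sets; the Conjugate Functions Principle (Lemma~\ref{lemConjFunc}) then gives the same for the imaginary parts, exactly as in the proof of Theorem~\ref{theoConvAbelInt}.

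The main obstacle is the equicontinuity estimate at the unadapted conical points with $\gamma_O>1/2$: unlike the branch points of a ramified covering, these cannot be flattened to a uniform planar triangulation by the chart $g_O$, so one must check that the length--area arguments of~\cite{Sko13,Lu26} survive in the cone metric with constants controlled solely by the eccentricity and by a lower bound on $\gamma_O$. A secondary point, already handled in the energy-convergence theorem of this section, is that near such a point the faces incident to $O$ must be estimated directly --- both the smooth and the discrete energy on them are $O(h^{2\gamma_O})=O(h)$ because $\gamma_O>1/2$ --- while the remaining faces are controlled by the derivative bounds of Lemma~\ref{lemDerivEst} together with the convergence of $\int\rho^{2\gamma_O-2}\,d\rho$ near the singularity, which again uses $\gamma_O>1/2$.
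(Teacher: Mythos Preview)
Your proposal is correct and matches the paper's implicit approach: Section~\ref{SecPoly} states this theorem without proof, pointing back to the arguments of Sections~\ref{SecConvProof}--\ref{SecConvAbelProof} (themselves modelled on~\cite{BoSk16}) with the improved energy estimates near adapted singularities substituted in. Your transcription of the chain Theorem~\ref{theoConvmultiharmonic}~$\Rightarrow$~Theorem~\ref{theoConvAbelInt} to the polyhedral setting is exactly what is intended.

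Two minor remarks. First, for a polyhedral surface there are no boundary triangles $F_\varrho$ and the weights are pure cotan weights everywhere, so Lemma~\ref{lemboundE'} simplifies (only the first case of its proof is needed) and the perturbation bounds of Remark~\ref{remest} play no role; your invocation of them is harmless but superfluous. Second, the equicontinuity estimate at an unadapted conical point with $\gamma_O>1/2$ is not a new obstacle you need to overcome: it is already contained in~\cite[Section~5]{BoSk16}, whose equicontinuity lemma is proved for general polyhedral surfaces with arbitrary cone angles by working directly in the intrinsic metric. Your sketch of the length--area argument in the cone metric is correct, but you may simply cite~\cite{BoSk16} there rather than rederive it; the present paper's Lemma~\ref{lemEqui}(ii) via the chart $g_O$ is the genuinely new variant, tailored to the adapted case $\gamma_O\le 1/2$.
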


\section{Numerical experiments}\label{secNum}

In the following, we present some numerical analysis for our convergence 
results detailed above. We are very grateful to Stefan Sechelmann for writing 
software and performing numerical experiments.

Mainly, we apply the scheme described in Section~\ref{SecResults}, but we 
consider 
the triangulations on the sphere $\Sp^2\cong \hC$ without stereographic 
projection to 
$\C$. Furthermore, we use an approximation of the discrete energy $E_T$ and of 
the discrete multi-valued harmonic functions $u_{T,P}$ because  we use slightly 
different 
weights instead of those given in Section~\ref{secDiscHarm}. In particular, 
each triangle $\Delta$ of $T$ as an embedded triangle in $\Sp^2$ before 
stereographic projection has circular 
arcs as edges where the circles all pass through the north pole ($=\infty$) if 
$\Delta$ is contained in the spherical region corresponding to $B_{\varrho}(0)$ 
or all pass through the south pole ($=0$) if $\Delta$ is contained in the 
spherical region corresponding to $\hC\setminus B_{\varrho}(0)$, respectively. 
For boundary triangles, there are two types of circular arcs. 
For practical reasons, we do not work with these triangles in 
$\Sp^2\subset\R^3$. Instead, we take the vertices and add straight line 
segments 
in $\R^3$ between incident vertices. For every original triangle 
$\Delta$ on $\Sp^2$ we obtain a corresponding triangle $\Delta^S$ in $\R^3$, 
see Figure~\ref{figExTriang} for some examples of triangulations.
\begin{figure}[tb]
\hfill
 \includegraphics[width=0.2\textwidth]{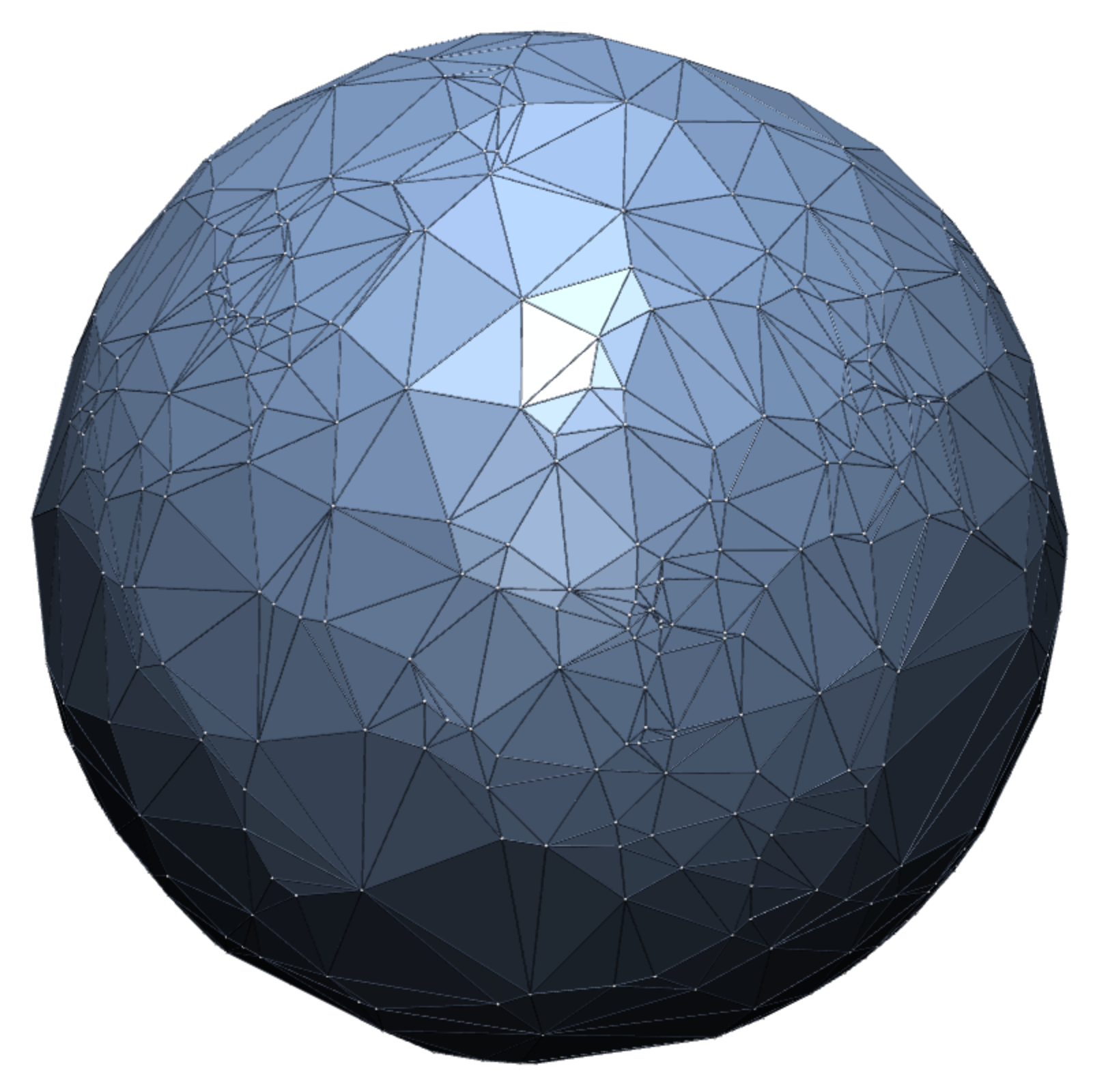}
\hfill
\includegraphics[width=0.2\textwidth]{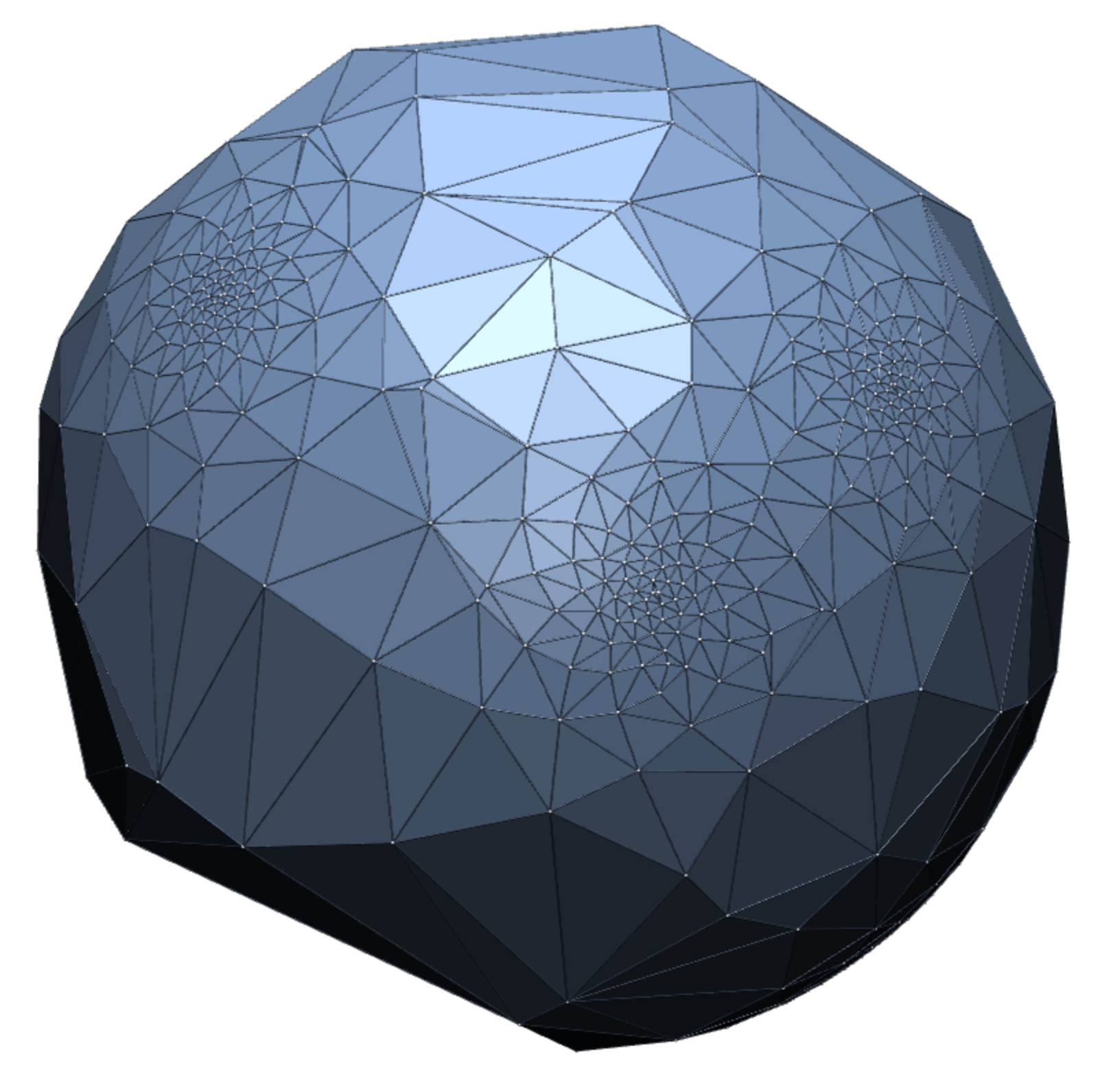}
\hfill
 \includegraphics[width=0.2\textwidth]{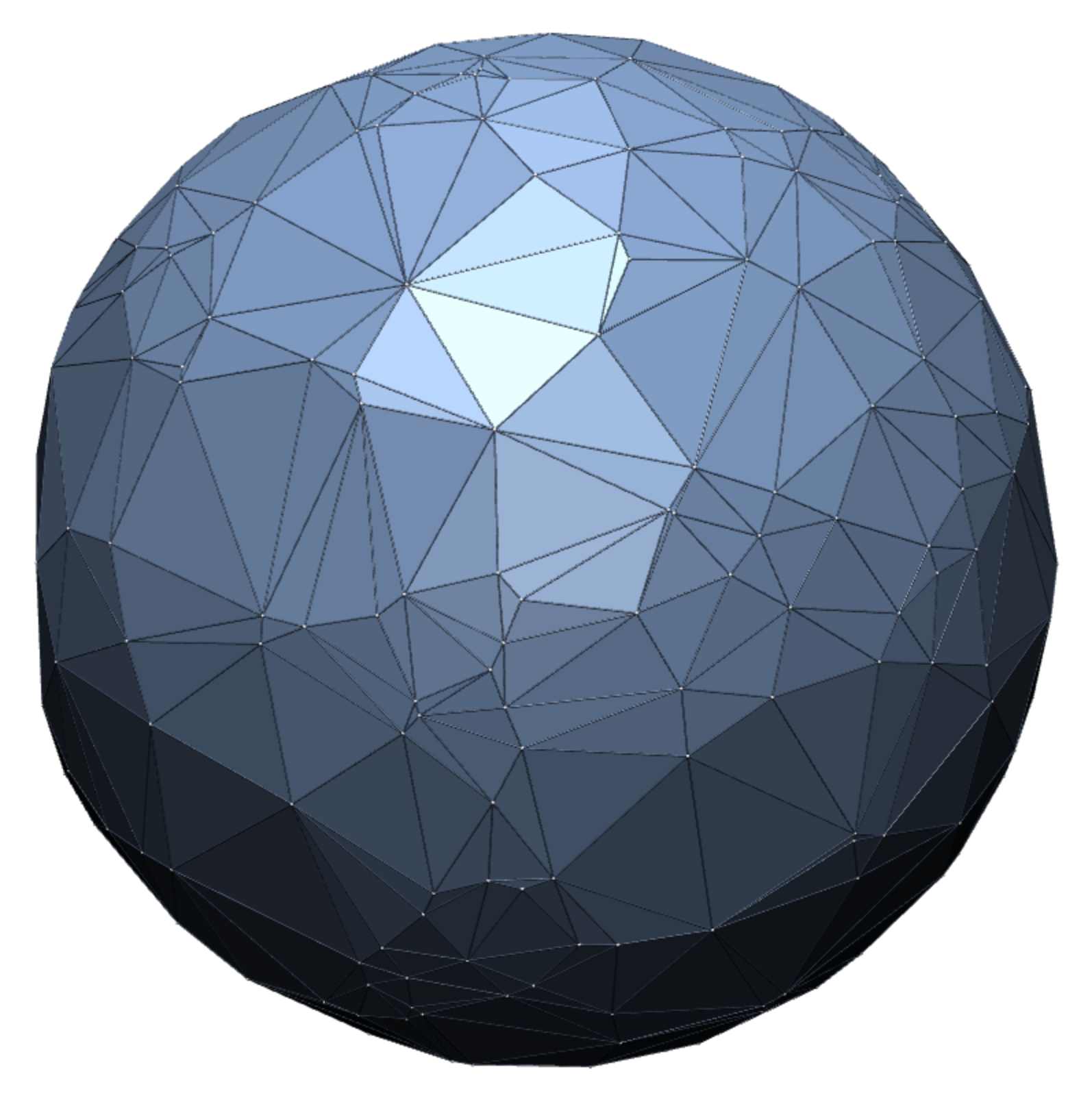}
\hfill
\includegraphics[width=0.2\textwidth]{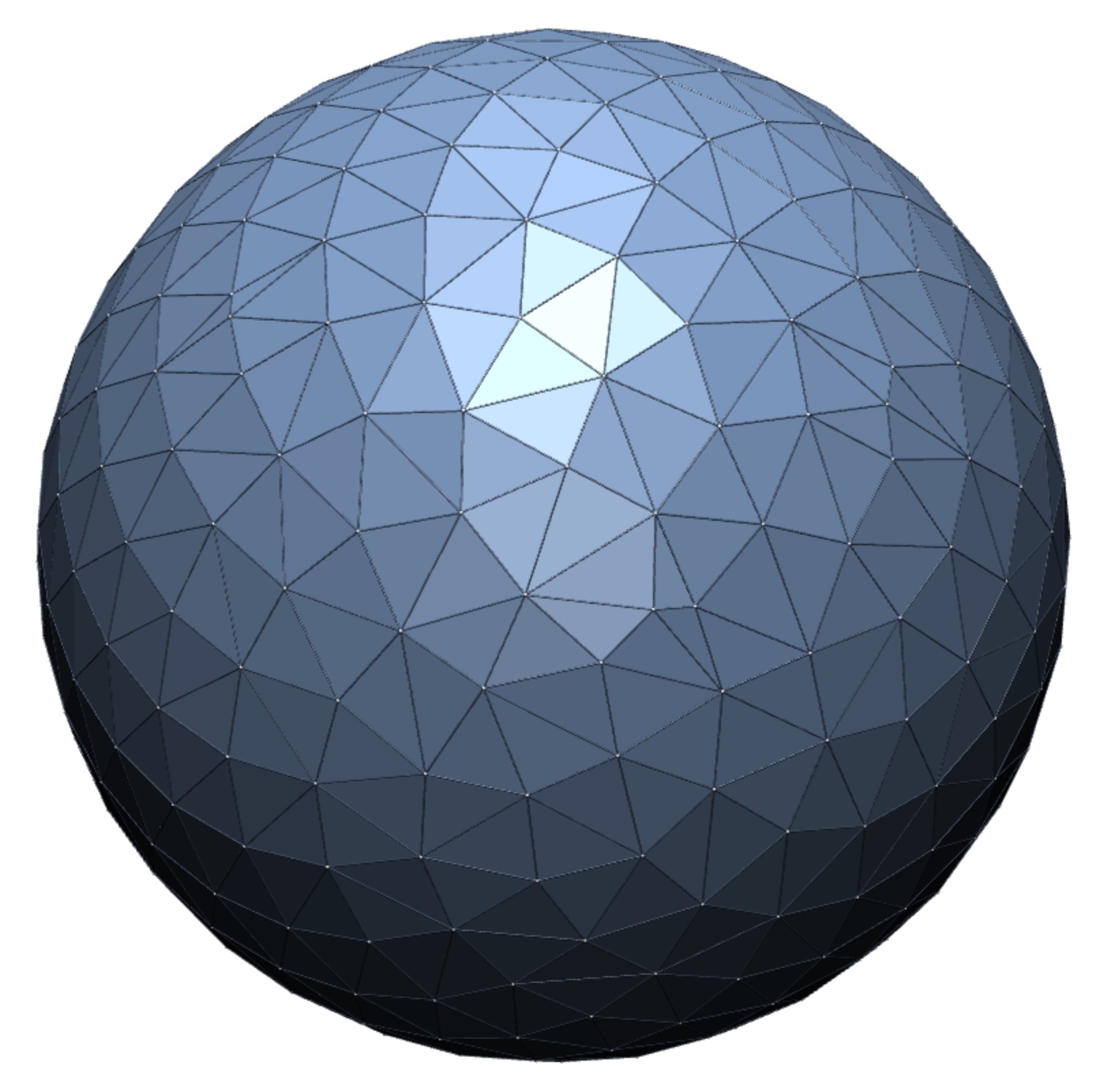}
\hfill
\caption{Examples of our four types of triangulations used for the torus $\cal 
T$: Random 
with adapted triangles (Clustering Random $\circ$), Fibonacci with adapted 
triangles (Clustering Fibonacci {\color{red}$\Box$}),
Random without adapted triangles (Homogeneous Random 
{\color{green}$\Diamond$}), Fibonacci without adapted 
triangles (Homogeneous Fibonacci 
{\color{blue}$\bigtriangleup$})}\label{figExTriang}
\end{figure}
As $\varrho$ is fixed and the 
maximum edge length $h$ tends to zero, any angle $\alpha$ in the triangle 
$\Delta$ and the corresponding angle $\alpha^S$ in the triangle $\Delta^S$ 
only differ by an error of order $h$, in particular $|\alpha-\alpha^S|\leq 
\Const_{\delta,\rho}\cdot l_{\max}(\Delta)$. Thus, for uniform Delaunay 
triangulations which we consider, the weights $c^S(e)= \frac{1}{2}(\cot 
\alpha_e^S +\cot \beta_e^S)$, using the cotan-formula for the angles 
of the triangles $\Delta^S$, can be estimated using the original weights $c(e)$ 
for $T$. More precisely, there is a constant $\Const_{\delta,\varrho}$ such that
$c(e)\cdot (1-\Const_{\delta,\varrho}\cdot h) \leq c^S(e)\leq c(e)\cdot 
(1+\Const_{\delta,\varrho}\cdot h)$. This implies $E_T(u)\cdot 
(1-\Const_{\delta,\varrho}\cdot h)\leq E^S_T(u)\leq E_T(u) \cdot 
(1+\Const_{\delta,\varrho}\cdot h)$. Thus for $0<h<1/\Const_{\delta,\varrho}$ 
and some constant $\Const_{P,\delta,{\cal R}, \varrho}>0$ we 
obtain similarly as in the proof of Lemma~\ref{lemEnergyFormConv} that 
\[ |E_T^S(u^S_{T,P})-E(u_{{\cal R},P})| \leq \Const_{P,\delta,{\cal R}, 
\varrho}\cdot h.\]

As concrete examples we consider two surfaces with known period matrices, 
namely the torus $\cal T$ of genus $1$ with branch points $0.5+0.4i$, 
$-0.3+0.2i$, $-0.1$, $0.1-0.2i$ and Lawson's minimal surface~$\cal L$ of genus 
$2$ which corresponds to the hyperelliptic curve $\mu^2=\lambda^6-1$ with 
branch points $\text{e}^{ik\pi/3}$, $k=0,1,\dots,5$. The smooth period matrices 
are $\Pi_{\cal T}\approx0.836+0.955i$ and $\Pi_{\cal 
L}=\frac{i}{\sqrt{3}}\left(\begin{smallmatrix} 2 
& -1 \\ -1 & 2 \end{smallmatrix}\right)$.


We compare four different types of triangulations of $\Sp^2$ which are used as 
basis for the computations of the discrete period matrices. 
Figure~\ref{figExTriang} shows an example for each of these four types. 
In order to simplify calculations and the construction of 
cycles, we always use the same triangulation on every sheet of the covering. 
\begin{description}
\item[Random] We sample points at random on the sphere and then build 
the corresponding Delaunay triangulation. 
\item[Fibonacci] The points on a sphere are evenly distributed by means of 
a Fibonacci spiral. This leads to very 'regular' triangulations with 
triangles which are almost equilateral, except near branch points, which are in 
general additional vertices. 
\end{description}
These two types of triangulations are directly used for further 
computations (called Homogeneous Random {\color{green}$\Diamond$} and 
Homogeneous Fibonacci {\color{blue}$\bigtriangleup$}).
\begin{figure}[tb]
\begin{tikzpicture}
\node at (-2,0){\includegraphics[width=\textwidth]{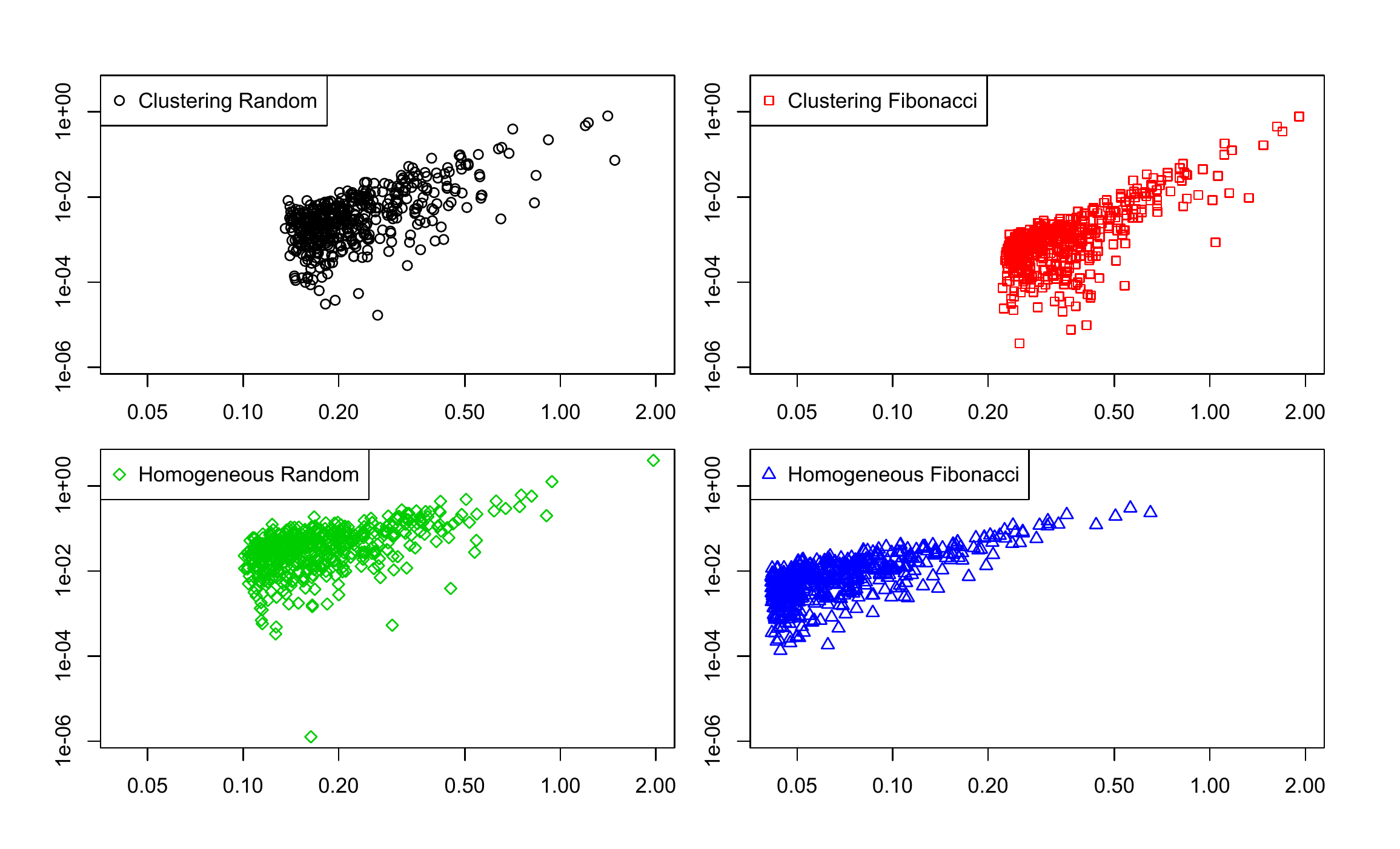}};
\draw (-7.492,3.012) -- (-3.482,3.743);
\draw[color=green] (-7.492,2.281) -- (-3.482,3.743);
\draw (-7.492,-1.719) -- (-3.482,-0.988);
\draw[color=red] (-7.492,-1.3535) -- (-3.482,-0.988);

\draw (0.397,2.481) -- (4.407,3.212);
\draw[color=green] (0.397,1.75) -- (4.407,3.212);
\draw (-0.603,-1.519) -- (3.407,-0.788);
\end{tikzpicture}
\caption{Scattering-plot of the approximation error for the period 
matrices for examples with different maximal edge length for the four 
different types of triangulations of the torus $\cal T$. The black line (in all 
plots) has slope $1$, the green line (in 
the two Clustering plots) has slope~$2$ and the red line (in the Homogeneous 
Random plot) has slope $1/2$. 
}\label{figTorus}
\end{figure}
\begin{figure}[tb]
\begin{tikzpicture}
\node at (0,0){\includegraphics[width=\textwidth]{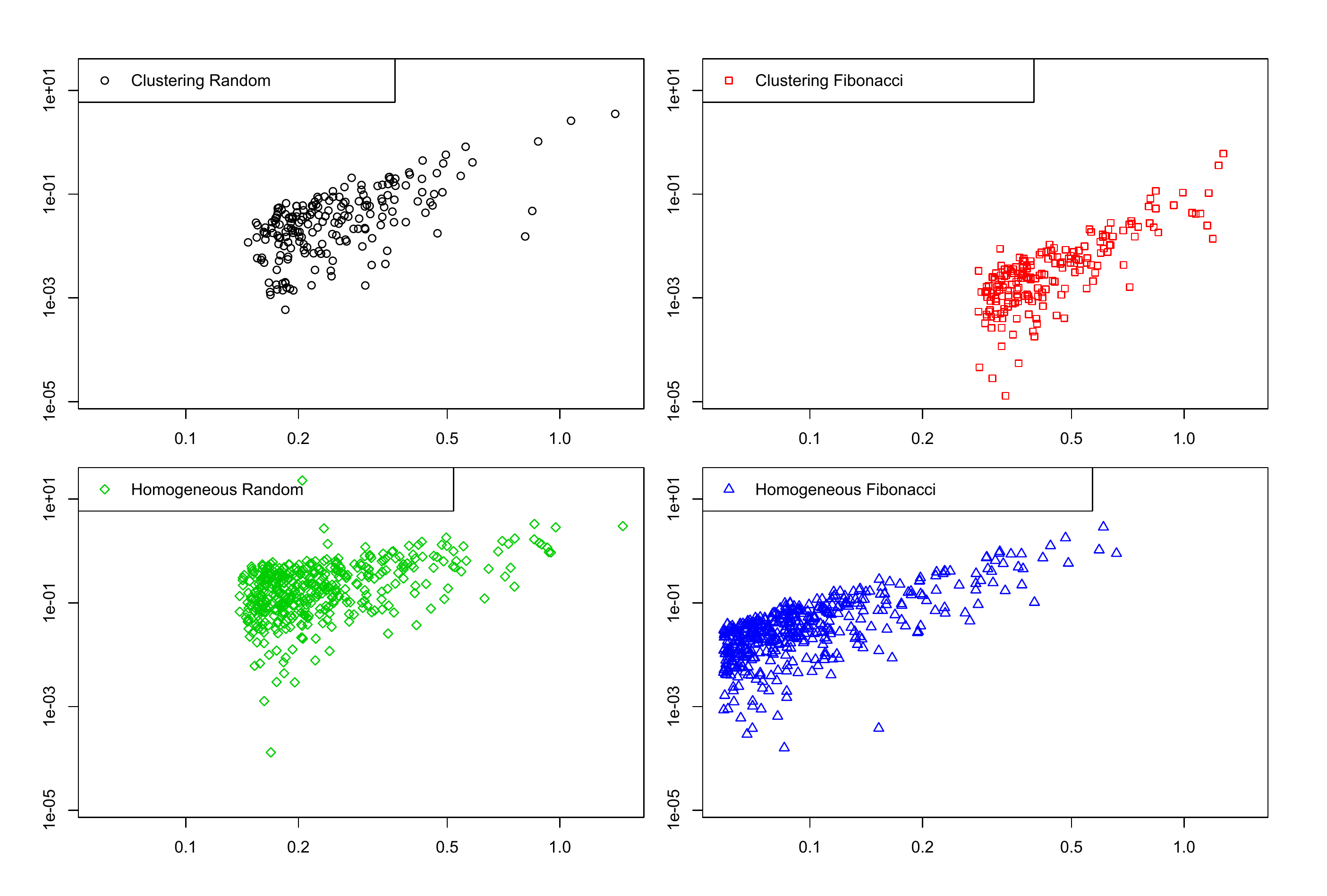}};
\draw (-6.12,3.287) -- (-1.327,3.952);
\draw (-6.12,-2.213) -- (-1.327,-1.548);
\draw[color=red] (-6.12,-1.8) -- (-1.327,-1.4675);
\draw[color=green] (-6.12,2.622) -- (-1.327,3.952);

\draw (1.88,2.587) -- (6.673,3.252);
\draw (1.88,-1.713) -- (6.673,-1.048);
\draw[color=green] (1.88,1.922) -- (6.673,3.252);
\end{tikzpicture}
\caption{Scattering-plot of the approximation error for the period 
matrices for examples with different maximal edge length for the four 
different types of triangulations of Lawson's minimal surface $\cal L$ 
of genus $2$. The black line (in all plots) has slope $1$, the green line (in 
the two Clustering plots) has slope~$2$ and the red line (in the Homogeneous 
Random plot) has slope $1/2$. 
}\label{figLawson}
\end{figure}
In these two cases the estimates of~\cite{BoSk16} apply. Our numerical results 
are plotted in the lower rows of Figures~\ref{figTorus} and~\ref{figLawson}. 
The log-log plots show that the error behaves 
indeed like $\sqrt{h}$ for 'Homogeneous Random', where $h$ is the maximal edge 
length. This was also observed in the example studied in Section~7.3 
of~\cite{BoSk16}. 
Nevertheless, for the more regular triangulations using Fibonacci spirals, our 
numerical evidence indicates a higher order of the error bound, possibly a 
linear dependence on $h$.

According to our new idea of adapted triangulations explained 
in Section~\ref{subsecConv1}, we refine the examples of the two types of 
triangulations above in a neighborhood of the branch points by suitably adding 
vertices (called Clustering Random $\circ$ and Clustering Fibonacci 
{\color{red}$\Box$}). Our results in Figures~\ref{figTorus} and~\ref{figLawson} 
confirm that the error between smooth and discrete period matrices 
decreases indeed faster in the adapted case. In particular, the log-log plots 
in the upper row of Figures~\ref{figTorus} and~\ref{figLawson}, respectively, 
show that for our adapted method the 
error depends at least linearly on the maximal edge length $h$ (as proven in 
Theorem~\ref{theoPeriodConv}) and is again possibly even of higher order for 
more regular triangulations using Fibonacci spirals.
Recall that the actual bounds on the approximation error depend on the angles 
in the triangles which differ for all our examples. In our proof we only 
use some (rough) estimate of the angles such that our constants in 
Theorems~\ref{lemEnergyFormConv} and~\ref{theoPeriodConv} depend only on the 
minimal angle of the adapted triangulation. We did not study 
the dependence of the angles in detail, but our numerical results suggest that 
the order of convergence also depends significantly on the regularity of the 
triangulations.


\subsection*{Acknowledgments}
The authors especially thank Stefan 
Sechelmann for writing software and creating examples for numerical 
experiments. 

This research was supported by the DFG Collaborative Research Center
TRR~109 ``Discretization in Geometry and Dynamics''.

\appendix
\section{Appendix}\label{appendix}

\subsection{Interpolation function on boundary triangles and estimates on 
corresponding edge weights}\label{secApp1}

In the following, we expose the calculations for the energy of the 
interpolation function and the corresponding edge weights.

Let $\Delta[x,y,z]\in F_{\varrho}$ be a boundary triangle. Without loss of 
generality, we assume that the vertices are labelled such that $x\in 
B_{\varrho}$ and $y,z\in \C\setminus B_{\varrho}$. Therefore, the triangle 
$\Delta[x,y,z]$ is bounded by two straight edges $[x,y]$ and $[x,z]$ and by the 
trace of the curve $s:[0,1]\to\C$, $s(t)=\frac{yz}{z+t(y-z)}$, connecting $y$ 
and $z$ which is in 
general a circular arc. We parametrize this triangle by
\begin{equation*}
 p:[0,1]\times[0,1]\to\Delta[x,y,z],\quad p(\tau,\sigma)= x+\sigma(s(\tau)-x).
\end{equation*}
Note that $p$ is bijective for $\sigma\not=0$. In this parametrization, the 
interpolation function is $u_\Delta(\tau,\sigma):= I_Tu(p(\tau,\sigma)) 
=u_x+\sigma(u_y-u_x +\tau(u_z-u_y))$ as explained in 
Section~\ref{secDiscHarm}. Here we use the notation $u_v=u(v)$ for the values 
of the given smooth function $u$ at the vertices $x,y,z$.
Therefore, we obtain
\begin{align}
 \int\limits_{\Delta[x,y,z]}|\nabla I_Tu|^2 &= \int_0^1\int_0^1 
Du_\Delta(\tau,\sigma) Dp^{-1}(p(\tau,\sigma))(Dp^{-1}(p(\tau,\sigma)))^T 
(Du_\Delta(\tau,\sigma))^T |\det Dp(\tau,\sigma)| d\tau d\sigma \notag \\
&= \int_0^1\int_0^1 \left( (u_z-u_y)^2 \frac{|s(\tau)-x|^2 
-2\tau\text{Re}((s(\tau)-x) \overline{s'(\tau)}) 
+\tau^2|s'(\tau)|^2}{|\text{Im}((s(\tau)-x) \overline{s'(\tau)})|^2}\right. 
\notag\\
&\qquad\qquad\quad + (u_y-u_x)^2 \frac{|s'(\tau)|^2}{|\text{Im}((s(\tau)-x) 
\overline{s'(\tau)})|^2} \label{eqestFrho}\\ 
&\qquad\qquad\quad \left.+ (u_y-u_x)(u_z-u_y) \frac{2\tau|s'(\tau)|^2 
-2\text{Re}((s(\tau)-x) 
\overline{s'(\tau)})}{|\text{Im}((s(\tau)-x) \overline{s'(\tau)})|^2}\right)  
|\det Dp(\tau,\sigma)| d\tau d\sigma \notag\\
&=\frac{1}{2} \int_0^1 \left( (u_z-u_y)^2 \frac{|s(\tau)-x|^2 
-2\tau\text{Re}((s(\tau)-x) \overline{s'(\tau)})
+\tau^2|s'(\tau)|^2}{|\text{Im}((s(\tau)-x) \overline{s'(\tau)})|}  \right. 
\notag\\
&\qquad\qquad\quad + (u_y-u_x)^2 \frac{|s'(\tau)|^2}{|\text{Im}((s(\tau)-x) 
\overline{s'(\tau)})|}  \notag\\
&\qquad\qquad\quad \left.+ (u_y-u_x)(u_z-u_y) \frac{2\tau|s'(\tau)|^2 
-2\text{Re}((s(\tau)-x) \overline{s'(\tau)})}{|\text{Im}((s(\tau)-x) 
\overline{s'(\tau)})|}\right) d\tau,\notag
\end{align}
as $Du_\Delta(\tau,\sigma)= (\sigma(u_z-u_y), u_y-u_x +\tau(u_z-u_y))$, 
$Dp(\tau,\sigma)= \begin{pmatrix} \sigma \text{Re}(s'(\tau)) & 
\text{Re}(s(\tau)-x) \\ \sigma \text{Im}(s'(\tau)) & 
\text{Im}(s(\tau)-x) \end{pmatrix}$, and $\det(Dp(\tau,\sigma))= \sigma
\text{Im}((s(\tau)-x) \overline{s'(\tau)})$. Thus we deduce that
\begin{align}
\int\limits_{\Delta[x,y,z]} |\nabla I_Tu|^2 &= C_{[x,y]}(u(x)-u(y))^2 + 
C_{[y,z]}(u(y)-u(z))^2 +C_{[z,x]}(u(z)-u(x))^2, \notag \\
\text{where}\qquad  C_{[x,y]}&=\frac{1}{2} \int_0^1 
\frac{(1-\tau)|s'(\tau)|^2 +\text{Re}((s(\tau)-x))
\overline{s'(\tau)})}{|\text{Im}((s(\tau)-x) \overline{s'(\tau)})|} d\tau, 
\label{eqCxy} \\
C_{[y,z]} &= \frac{1}{2} \int_0^1 \frac{|s(\tau)-x|^2+ 
\tau(\tau-1)|s'(\tau)|^2 +\text{Re}((s(\tau)-x) 
\overline{s'(\tau)})}{|\text{Im}((s(\tau)-x) \overline{s'(\tau)})|} 
d\tau,\label{eqCyz} \\
C_{[z,x]}&=\frac{1}{2} \int_0^1 \frac{\tau |s'(\tau)|^2 -\text{Re}((s(\tau)-x)
\overline{s'(\tau)})}{|\text{Im}((s(\tau)-x) \overline{s'(\tau)})|} d\tau. 
\label{eqCzx}
\end{align}
This gives an explicit way to calculate the edge weights. Note that by the same 
method we can obtain the usual cotan-weights on the Euclidean triangle with  
vertices $x,y,z$, if we use $s_E(t)=y+t(z-y)$ instead of $s(t)$ for 
$t\in[0,1]$. The function $s_E$ is the usual linear parametrization of the 
straight edge from $y$ to $z$.

An important observation is that these seemingly 'complicated' weights are in 
fact only 
small perturbations of the usual cotan-weights if the edge length is small 
enough. To see this, we will estimate the quantities in the above integrals 
compared to the corresponding quantities for $s_E$. 

\begin{proof}[Proof of the estimate in Remark~\ref{remest}]
We assume that there is some $\delta>0$ such that all angles in the triangle 
$\Delta[x,y,z]$ are in $[\delta,\pi-\delta]$ and also all angles of the 
Euclidean triangle with vertices $x,y,z$ are in $[\delta,\pi-\delta]$.
In the following, we will always assume that $\tau\in[0,1]$ as in the integral 
terms above. Also, we are not interested in the best possible estimates, any 
constant, depending only on the indicated parameter, will suffice.

First note that $|\text{Im}((s(\tau)-x) \overline{s'(\tau)})|\geq 
|s(\tau)-x||s'(\tau)|\sin\delta$ and $|\text{Im}((s_E(\tau)-x) 
\overline{s_E'(\tau)})|\geq |s_E(\tau)-x||z-y|\sin\delta$. Denote by $h$ the 
maximal edge length of $\Delta[x,y,z]$, so $h\geq 
\max\{|y-x|,|z-x|,\text{length}(s)\}$. As $\Delta[x,y,z]$ is a boundary 
triangle, we have 
\[ \frac{\varrho^2}{(\varrho+h)^2}\leq \left|\frac{s'(\tau)}{s_E(\tau)}\right|= 
\frac{|s'(\tau)|}{|y-z|} \leq \frac{(\varrho+h)^2}{\varrho^2}.\]
Furthermore, we deduce that $1-\Const_{\delta,\varrho}\cdot h\leq 
\left|\frac{s(\tau)-x}{s_E(\tau)-x}\right|\leq 1+\Const_{\delta,\varrho}\cdot 
h$ as
\[\left|\frac{s(\tau)-s_E(\tau)}{s_E(\tau)-x}\right| = 
\left|\frac{\tau(\tau-1)(y-z)^2}{z+\tau(y-z)}\cdot 
\frac{1}{(y-x)+\tau(z-y)}\right| \leq \frac{h}{\varrho\sin^2\delta}.\]
Further, note that using the sine law
\[\sin^2\delta \leq \frac{|s_E(\tau)-x|}{|s_E'(\tau)|} 
=\frac{|(y-x)+\tau(z-y)|}{|y-z|} \leq 1+\frac{1}{\sin\delta}.\]
Combining these estimates, we have 
\begin{align*}
\frac{|s(\tau)-s_E(\tau)| 
|s'(\tau)|}{|\text{Im}((s_E(\tau)-x) \overline{s_E'(\tau)})|} &\leq 
\frac{|s(\tau)-s_E(\tau)|}{|s_E(\tau)-x|} \cdot 
\left|\frac{s'(\tau)}{s_E(\tau)}\right| \cdot\frac{1}{\sin\delta} \leq 
\Const_{\delta,\varrho}\cdot  h,\\
\frac{|s'(\tau)|^2}{|\text{Im}((s_E(\tau)-x) \overline{s_E'(\tau)})|} &\leq 
\left|\frac{s'(\tau)}{s_E(\tau)}\right| \cdot 
\frac{|s_E'(\tau)|^2}{|s_E'(\tau)| |s_E(\tau)-x|\sin\delta} \leq 
\Const_{\delta,\varrho}\cdot  h,\\
\frac{|s(\tau)-x|^2}{|\text{Im}((s_E(\tau)-x) \overline{s_E'(\tau)})|} &\leq 
\frac{|s(\tau)-x|^2}{|s_E(\tau)-x|^2}\cdot \frac{|s_E(\tau)-x|^2}{|s_E'(\tau)| 
|s_E(\tau)-x|\sin\delta} \leq \Const_{\delta,\varrho}\cdot  h.
\end{align*} 
Furthermore, as $h\leq \varrho/2$,
\[ \frac{|s_E(\tau)-x| |s'(\tau)-s_E'(\tau)|}{|\text{Im}((s_E(\tau)-x) 
\overline{s_E'(\tau)})|} \leq \frac{|s'(\tau)-s_E'(\tau)|}{|y-z|\sin\delta} = 
\frac{|y-z||\tau^2(y-z)^2+2\tau z-z|}{|z+\tau(y-z)|^2\sin\delta} \leq 
\frac{6}{\varrho\sin\delta}\cdot h.\]
Therefore, we obtain
\begin{multline*}
\left| \frac{\text{Im}((s(\tau)-x) 
\overline{s'(\tau)})}{\text{Im}((s_E(\tau)-x) \overline{s_E'(\tau)})} -1\right|
\\ 
\shoveright{ = \left| \frac{\text{Im}((s(\tau)-s_E(\tau)) 
\overline{s'(\tau)})}{\text{Im}((s_E(\tau)-x) \overline{s_E'(\tau)})} + 
\frac{\text{Im}((s(\tau)-x) 
\overline{(s'(\tau)-s_E'(\tau))})}{\text{Im}((s_E(\tau)-x) 
\overline{s_E'(\tau)})}\right| \leq \Const_{\varrho,\delta}\cdot h} \\
\shoveleft{ \left| \frac{\text{Re}((s(\tau)-x) 
\overline{s'(\tau)})- \text{Re}((s_E(\tau)-x) 
\overline{s_E'(\tau)})}{\text{Im}((s_E(\tau)-x) \overline{s_E'(\tau)})}\right|}
\\
=\left| \frac{\text{Re}((s(\tau)-s_E(\tau)) 
\overline{s'(\tau)})}{\text{Im}((s_E(\tau)-x) \overline{s_E'(\tau)})} + 
\frac{\text{Re}((s_E(\tau)-x) 
\overline{(s'(\tau)-s_E'(\tau))})}{\text{Im}((s_E(\tau)-x) 
\overline{s_E'(\tau)})}\right| 
\leq \Const_{\varrho,\delta}'\cdot h 
\end{multline*}
This also implies $\left| \frac{\text{Re}((s(\tau)-x) 
\overline{s'(\tau)})}{\text{Im}((s_E(\tau)-x) \overline{s_E'(\tau)})}\right| 
\leq \left| \frac{\text{Re}((s_E(\tau)-x) 
\overline{s_E'(\tau)})}{\text{Im}((s_E(\tau)-x) \overline{s_E'(\tau)})}\right| 
+ \Const_{\varrho,\delta}'\cdot h
\leq \Const_{\varrho,\delta}''$ 
Denote by $\alpha^E_z\in[\delta,\pi-\delta]$ the angle in the Euclidean 
triangle with vertices $x,y,z$. Then the previous estimates imply
\begin{align*}
 |C_{[x,y]}-\frac{1}{2}\cot\alpha_z^E| &\leq \frac{1}{2} \int_0^1 \left(
\frac{(1-\tau)(|s'(\tau)|^2\left|\frac{\text{Im}((s(\tau)-x) 
\overline{s'(\tau)})}{\text{Im}((s_E(\tau)-x) \overline{s_E'(\tau)})} 
-1\right|+\left|\frac{|s'(\tau)|^2}{|s_E'(\tau)|^2} -1\right| 
|s_E'(\tau)|^2)}{|\text{Im} ((s_E(\tau)-x) \overline{s_E'(\tau)})|}\right. \\
&\qquad \qquad + \frac{|\text{Re}((s(\tau)-x) 
\overline{s'(\tau)})| \left|\frac{\text{Im}((s(\tau)-x) 
\overline{s'(\tau)})}{\text{Im}((s_E(\tau)-x) \overline{s_E'(\tau)})} 
-1\right|}{|\text{Im}((s_E(\tau)-x) \overline{s_E'(\tau)})|}\\
&\qquad\qquad \left.+ 
\frac{|\text{Re}((s(\tau)-x) 
\overline{s'(\tau)})- \text{Re}((s_E(\tau)-x) 
\overline{s_E'(\tau)})|}{|\text{Im}((s_E(\tau)-x) \overline{s_E'(\tau)})|} 
\right) d\tau \\
& \leq \Const_{\varrho,\delta}'''\cdot h.
\end{align*}
Similarly, we can deduce that $|C_{[y,z]}-\frac{1}{2}\cot\alpha_x^E| \leq 
\Const_{\varrho,\delta}'''\cdot h$ and $|C_{[z,x]}-\frac{1}{2}\cot\alpha_y^E| 
\leq \Const_{\varrho,\delta}'''\cdot h$.
\end{proof}

\subsection{Proof of Lemma~\ref{lemEstF0}}\label{app2}

 Note that as $u$ is smooth, there exist constants 
$\Const_{u,\varrho},\const_{u,\varrho}>0$ such that for 
$0<h<\const_{u,\varrho}$ 
we have $|u(x)-u(y)|^2/|x-y|^2\leq 
\Const_{\delta,\varrho}\cdot\max\limits_{w\in\Delta}\|D^1u(w)\|^2 \leq 
\Const_{u,\varrho}$ for all edges $e=[x,y]$ of boundary triangles in $F_\varrho$ 
with edge lengths smaller than $h$.

Using our estimates of Section~\ref{secApp1} we deduce that there exists a 
constant $\Const_{\delta,\varrho}$ such that under our assumptions on angles 
and edge lengths we have the following estimates: For every boundary triangle 
$\Delta[x,y,z]\in F_{\varrho}$ and with the notation of Section~\ref{secApp1}
\begin{align*}
 &|z-y|^2 \frac{|s(\tau)-x|^2}{|\text{Im}((s(\tau)-x) 
\overline{s'(\tau)})|^2}\leq \Const_{\delta,\varrho}, \\
& |y-x|^2 \frac{|s'(\tau)|^2}{|\text{Im}((s(\tau)-x) \overline{s'(\tau)})|^2} 
\leq \Const_{\delta,\varrho} ,\\ 
&|y-x||z-y| \frac{|\text{Re}((s(\tau)-x) 
\overline{s'(\tau)})|}{|\text{Im}((s(\tau)-x) \overline{s'(\tau)})|^2}\leq 
\Const_{\delta,\varrho}.
\end{align*}
Now formula~\eqref{eqestFrho} leads to the estimate
\begin{equation}\label{eqestEDelta}
\int\limits_{\Delta[x,y,z]}|\nabla I_Tu|^2 \leq \Const_{\delta,\varrho,u}\cdot 
\text{Area}(\Delta[x,y,z]),
\end{equation}
where $\Const_{\delta,\varrho,u} \leq \Const_{\delta,\varrho}\cdot 
\max\limits_{w\in\Delta}\|D^1u(w)\|^2$.
Summing up these energies, we obtain
\begin{align*}
 E_{F_{\varrho}}(u) &=\sum_{\Delta\in F_{\varrho}} E_{T_\Delta}(u) 
=\sum_{\Delta\in F_{\varrho}} \int\limits_{\Delta[x,y,z]}|\nabla I_Tu|^2 
\leq \sum_{\Delta\in F_{\varrho}} \Const_{\delta,\varrho,u} \cdot
\text{Area}(\Delta[x,y,z]) \\
&\leq \Const_{\delta,\varrho,u}\cdot \text{Area}( B_{\varrho+h}(0)\setminus 
B_{\varrho-h}(0)) 
\leq \Const_{\delta,\varrho,u} \cdot 4h\varrho \cdot d \leq
\Const_{u,\delta,\varrho,{\cal R}}\cdot h,
\end{align*}
where $d$ denotes the degree of the covering map for $\cal R$.

\subsection{Proof of Equicontinuity Lemma~\ref{lemEqui}}\label{app3}

First note that condition~(D) from Section~\ref{SecConvAbel} implies that for 
every path in a non-degenerate uniform adapted triangulation $T$ with 
consecutive vertices $v_0v_1\dots v_m$ we have
\begin{align}\label{eqestPath}
E_{v_0v_1\dots v_m}(u):=& \sum_{k=1}^m c([v_{k-1},v_k])\cdot 
(u(v_k)-u(v_{k-1}))^2 \geq \sum_{k=1}^m \frac{\Const}{e}\cdot 
(u(v_k)-u(v_{k-1}))^2 
\notag \\ &\geq \frac{\Const}{e}\cdot \frac{1}{m} \left(\sum_{k=1}^m 
u(v_k)-u(v_{k-1})\right)^2=\frac{\Const}{e\cdot m} (u(v_m)-u(v_0))^2.
\end{align}
Here $e$ denotes the eccentricity as defined in Section~\ref{SecEqui} and for the last estimate we have used Schwarz's inequality.

Now consider a simply connected triangulation $T'$ with boundary contained in 
an open disc $B_r(v)\subset\C$. This is the assumption for part~(i) of the 
lemma. In the case of part~(ii), we consider the image triangulation $T_g'$ by 
the chart $g_O(z)=(z-O)^{\gamma_O}$. By abuse of notation, we still denote this image triangulation
by $T'$. Also, we denote the vertices of $T'$ by $Z$ and $W$, which are the 
actual vertices $z$, $w$ in the first case and the images $Z=g_O(z)$, $W=g_O(w)$ 
in the case of a branch point. For simplicity, we assume that the edges between 
vertices are straight line segments, as we do not need the actual, possibly 
curved edges. 

Let $u:V'\to\R$ be any function which assumes its maximum and its 
minimum on the boundary for any subgraph of $T'$. Let $Z$, $W$ be two distinct 
interior vertices of $T'$. Denote by $ZW$ the straight line segment joining 
these points. Let $dist(ZW,\partial T')$ be the Euclidean distance of this 
straight line segment to the curve of boundary edges. We assume that 
$|Z-W|<r/3<dist(ZW,\partial T')/3$ for some $r>0$. Let $h'$ denote twice the 
maximum circumradius of the triangles of $T'$. Let $m=\lfloor 
\frac{r-|Z-W|}{2h'}\rfloor$ be the largest integer smaller than 
$\frac{r-|Z-W|}{2h'}$. We consider auxiliary rectangles $R_k$, $k=1,\dots,m$, 
which are centered at $(Z+W)/2$ with one pair of sides parallel to $ZW$ with 
length $ZW+2k\cdot h'$ and other pair of sides orthogonal to $ZW$ of length 
$2k\cdot h'$. Then the interior of $R_k$, $k=1,\dots,m$, is covered by triangles 
of $T'$. Denote by $V_k'$ the set of vertices contained in $R_k\setminus 
R_{k-1}$, where $R_0=ZW$. Then any two vertices $v_A,v_B\in V'_k$ may be 
connected by a path $v_0v_1\dots v_N$ with $v_0=v_A$, $v_N=v_B$ and all 
vertices $v_j\in V'_k$ as $h'$ is larger than any edge length. 

Without loss of generality, assume that $u(Z)\geq u(W)$. As $u$ assumes its 
maximum and minimum on the boundary, there exists $Z_k,W_k\in V'_k$ such that 
$u(Z_k)\geq u(Z)\geq u(W)\geq u(W_k)$. The length of the path joining $Z_k$ and 
$W_k$ is at most the number of vertices in $V'_k$. The set of these vertices 
can be covered by at most $\Const\cdot (|Z-W|+4kh')/h'$ discs of radius $h'/2$. 
Therefore, by condition~(U) of Section~\ref{SecConvAbel} the number of vertices 
in $V'_k$ is less than $\Const\cdot e\cdot (|Z-W|/h'+4k)$. Therefore, we can 
estimate the energy for a path $v_0v_1\dots v_N$ in $V_k'$ from $v_0=Z_k$, 
$v_N=W_k$ using~\eqref{eqestPath}
\begin{align*}
E_{Z_k\dots W_k}(u)\geq &\frac{\Const}{e\cdot (\Const\cdot e\cdot 
(|Z-W|/h'+4k))}\cdot (u(Z_k)-u(W_k))^2 \\ 
&= \Const\cdot \frac{(u(Z)-u(W))^2}{e^2}\cdot \frac{h'}{km'+ |Z-W|/4}.
\end{align*}
Summing these estimates and estimating $\sum_{k=1}^{m} 
\frac{h'}{kh'+ |Z-W|/4} \geq \Const \int_{h'}^{\frac{r-|Z-W|}{2}} \frac{dt}{t+ 
|Z-W|/4}$ we get
\begin{align*}
 E'(u)&\geq \sum_{k=1}^{m} E_{Z_k\dots W_k}(u) \geq 
\Const\cdot \frac{(u(Z)-u(W))^2}{e^2} \cdot 
\int_{h'}^{\frac{r-|Z-W|}{2}}\frac{dt}{t+ |Z-W|/4} \\
 &\geq \Const\cdot \frac{(u(Z)-u(W))^2}{e^2}\cdot \log 
\frac{2r-|Z-W|}{4h'+|Z-W|} \\
&\geq \Const\cdot \frac{(u(Z)-u(W))^2}{e^2}\cdot \log 
\frac{r}{3\max\{|Z-W|,h'\}}.
\end{align*}
This implies the desired inequalities~\eqref{eqequiu1} and~\eqref{eqequiu2}.


\bibliographystyle{amsalpha} 
\bibliography{ConvergencePeriodMatrices_Arxiv}

\end{document}